\documentclass[11pt]{article}
\usepackage[margin=1in]{geometry}
\usepackage{graphicx} 
\usepackage{subcaption}
\usepackage{amsmath}
\usepackage{tikz}
\usepackage{float}
\usetikzlibrary{trees}
\usepackage{array}
\usepackage{amsfonts}
\usepackage{mathtools}

\usepackage{fullpage}
\usepackage{stmaryrd}
\usepackage{soul}
\usepackage{color}
\usepackage{times}
\usepackage{fancyhdr,graphicx,amsmath,amssymb}
\usepackage{amsthm}
\usepackage{bbm}
\usepackage{dsfont}
\usepackage{makecell}
\usepackage{hyperref} 
\usepackage{cleveref} 

\usepackage[ruled,vlined]{algorithm2e}
\newtheorem{lemma}{Lemma}[section]

\newtheorem{definition}[lemma]{Definition}

\newtheorem{theorem}[lemma]{Theorem}
\newtheorem{remark}[lemma]{Remark}
\newtheorem{proposition}[lemma]{Proposition}

\newtheorem{corollary}[lemma]{Corollary}

\newtheorem{claim}[lemma]{Claim}

\DeclareMathOperator{\argmax}{argmax}
\DeclareMathOperator{\argmin}{argmin}

\DeclareMathOperator{\supp}{supp}

\DeclareMathOperator{\graph}{graph}
\DeclareMathOperator{\conv}{conv}

\newenvironment{sproof}{%
  \proof}{\endproof}

\newtheorem{assumption}{Assumption}

\newcommand{\R}{\mathbb R}

\newcommand{\RR}{\mathbb{R}}

\newcommand{\prox}{\mathrm{prox}}

\usepackage{capt-of}
\usepackage{bbm}
\usepackage{hyperref}
\numberwithin{equation}{section}
\begin{document}
\title{Statistical Estimation of Monge Transport Maps via Brenier Potentials}
\author{
  Elsa Cazelles\\
  CNRS, IRIT, Université de Toulouse
  \and
  Edouard Pauwels\\
  Toulouse School of Economics\\ Université Toulouse Capitole
  \and
  Léo Portales\\
  IRIT, TSE, INP Toulouse
}
\date{\today}
\maketitle

\begin{abstract}
We introduce and analyze a statistical estimator for Monge transport maps: solutions to the quadratic optimal transport problem in the Euclidean space. For absolutely continuous source measures, this map is uniquely defined as the gradient of a convex function, a result known as Brenier’s theorem. Without absolute continuity, the problem is relaxed, maps are replaced by coupling measures, and optimal couplings are supported on the subdifferential of a convex function, called a Brenier potential. This potential is the basis for our transport map statistical estimator, for measures known only through finite samples. We construct an explicit optimal Brenier potential for the empirical problem, with a simple closed form expression based on dual solutions. Our transport map estimator is then defined as a suitable subdifferential selection, and can be extended to $\R^d$.
We exhibit convergence rates for this estimator based on a new error bound for the quadratic optimal transport problem. In the semi-discrete setting, where the target measure is finitely supported, our estimator enjoys sharper convergence rates. Our methodology does not rely on smoothness or continuity of the target Monge transport map and requires no computation beyond primal-dual solutions of the empirical finite dimensional problem. Finally, using similar proof techniques, we provide novel convergence rate for empirical couplings.
\end{abstract}

\section{Introduction}

Optimal transport, as initially formulated by Monge, seeks a map which ``pushes-forward'' a source probability measure $\mu$ onto a target probability measure $\nu$ while minimizing the quadratic cost. The classical Monge problem is written over $\R^d$ as follows, over the class of measurable mappings $T$
\begin{equation}
    \label{eq:preliminary_Monge}
    \underset{T_\sharp\mu=\nu}{\inf_{T\colon\R^d\to\R^d}}\int_{\R^d}\frac{1}{2}\|x-T(x)\|^2d\mu(x),
\end{equation}
where $\sharp$ denotes the push forward operation. The constraint can be equivalently formulated as $\int f(y) d\nu(y) = \int f(T(x)) d \mu(x)$ for all measurable $f$ on $\R^d$.
A solution to this problem, when it exists, will be called a \textit{Monge map}. 

Optimal transport is one of the key mathematical problems of the late 20-th century, both as an analytical tool and as a mathematical modeling framework with many applications \cite{villani_opt_old_new,book_santambrogio,peyre2019computational}. We limit ourselves to the quadratic case, as in \eqref{eq:preliminary_Monge}. Our question of interest is the statistical estimation of $T$ for source and target $\mu,\nu$ known only through independent samples, or equivalently, based on discrete empirical measures $\hat{\mu}_n,\hat{\nu}_m$. We introduce a general estimator for $T$, and provide finite sample guaranties under mild assumptions on problem data: compact support of the source and target measures, and absolute continuity of the source measure.

Monge problem admits a (semi)dual formulation over the set of convex functions on $\R^d$ as follows,
\begin{align}
    \label{eq:preliminary_semi_dual}
    \inf_{\varphi\in \Gamma_0(\R^d)} \int_{\R^d} \varphi(x) d\mu(x) + \int_{\R^d}\varphi^*(y) d\nu(y),
\end{align}
where $\Gamma_0(\R^d)$ denotes the set of convex lower semicontinuous proper functions  over $\R^d$ and $\varphi^* \colon y \mapsto \sup_{x \in \R^d} \left\langle x, y \right\rangle - \varphi(x)$ is the Legendre-Fenchel conjugate. A celebrated result of Brenier \cite{brenier1991polar} ensures that for an absolutely continous source measure $\mu$, the solution to \eqref{eq:preliminary_Monge} exists, is unique and is the (sub)gradient of a convex function $\varphi \in \Gamma_0(\R^d)$. This function $\varphi$ turns out to be an optimal solution to problem \eqref{eq:preliminary_semi_dual} \cite{book_santambrogio}. Solutions to \eqref{eq:preliminary_semi_dual} will be called Brenier potentials. 

These elements are well known \cite{villani_opt_old_new,ref_villani_topics, book_santambrogio}, and we provide a complete description in \Cref{sec:preliminary}. We also expand on the discrete optimal transport problem, between finitely supported measures, with the corresponding finite dimensional linear programming formulation \cite{peyre2019computational}.

\subsection{Overview of the main contributions}

\paragraph{An explicit  Brenier potential} For discrete measures $\mu_n$ and $\nu_m$, supported on $n$ and $m$ points respectively, we show that the dual problem \eqref{eq:preliminary_semi_dual} admits an explicit solution which takes the form of the maximum of $m$ affine functions.  The coefficients of these affine functions are directly deduced from the support points of the target $\nu_m$ and the dual solution of the discrete optimal transport linear program, see Section \ref{sec:estimator} and in particular Definition \ref{def:mainEstimator}. This Brenier potential has basically the same computational cost as computing the discrete optimal transport problem, as most solvers provide primal-dual solutions. Its structure is reminiscent of the Laguerre tesselation occurring in semi-discrete optimal transport \cite{aurenhammer1998minkowski}, but in a discrete-discrete setting. This construction serves as a basis for our statistical estimator.

\paragraph{Finite sample guaranties} In a statistical setting, given population source (which is absolutely continuous) and target probability measures $\mu,\nu$, and empirical measures $\hat{\mu}_n$, $\hat{\nu}_m$ supported on $n$ and $m$ points respectively, we have the following under mild assumptions.
\begin{itemize}
    \item As known, the population problem \eqref{eq:preliminary_Monge} for $(\mu,\nu)$ admits a solution $T = \nabla \bar{\varphi}$, where $\bar{\varphi}$ solves \eqref{eq:preliminary_semi_dual}.
    \item For the discrete random problem between $\hat{\mu}_n$ and $\hat{\nu}_m$, we have an explicit Brenier  map estimator $\hat{\phi}$, solution to \eqref{eq:preliminary_semi_dual} as described in the previous paragraph.
\end{itemize}
It is therefore natural to consider $\nabla \hat{\phi}$ as a statistical estimator for $\nabla \bar{\varphi}$. This estimator enjoys computational simplicity (closed form expression), without any parameter to estimate or tune. 

The main statistical question to be addressed is that of the convergence of $\nabla \hat{\phi}$ as the sample sizes $n$ and $m$ increase. Noting that the convex subdifferential (see \cite{rockafellar1970convex}) satisfies $\partial \hat{\phi}(x) = \{\nabla \hat{\phi}(x)\}$ for almost all $x$, it is possible to invoke the main results of \cite{segers2022graphical} to obtain that, almost surely, $\partial \hat{\phi}$ converges graphically to $\partial \bar{\varphi}$, restricted to the interior of the support of $\mu$. This is a qualitative result, illustrating the fact that $\nabla \hat{\phi}$ is a relevant estimator. However, the contribution of \cite{segers2022graphical} remains qualitative and the underlying notion of convergence is difficult to explicit in functional analysis terms. We considerably extend this result for our specific estimator and provide a finite sample upper bound on the quantity $\|\nabla \hat{\phi} - \nabla \bar{\varphi}\|_{L^1(\mu)}$. This upper bound holds under mild assumptions: compact support of $\mu,\nu$, and upper bounded density of $\mu$. In particular we make no assumption regarding the convexity of the support of $\mu$ or any lower bound on its density, and the population Monge map $\nabla \bar{\varphi}$ may be discontinuous. This is our main statistical result, it is given in \Cref{sec:mainStatisticalResult}. A visualization of the estimator and its convergence is given in \Cref{fig:illustration}.

\begin{figure}
    \centering
    \includegraphics[width=1.0\linewidth]{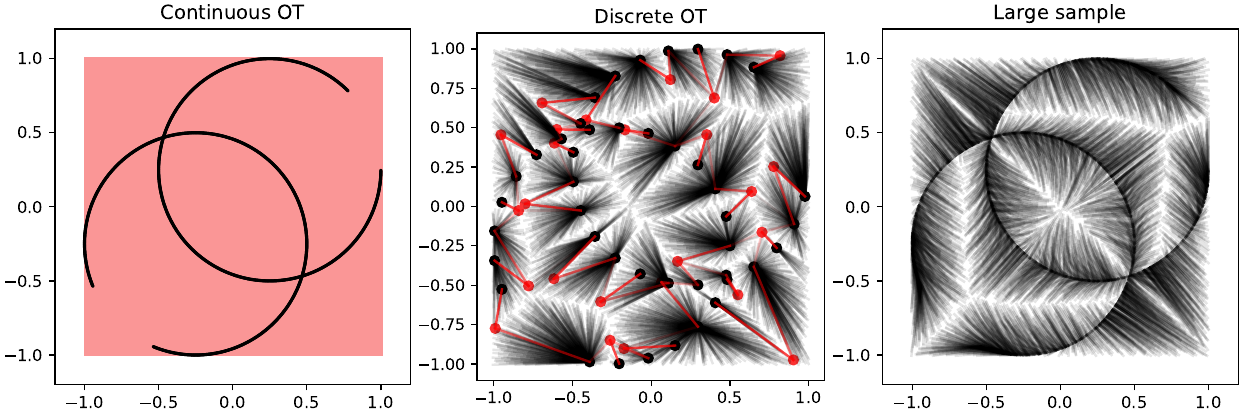}
    \caption{Visualization of the main contribution in the plane. \textit{Left:} Population problem, the goal is to send the uniform red source density on the square to the black singular target measure. \textit{Center:} Discrete, sampled, version of the problem with empirical measures. Dots represent sample from source (red) and target (black). The red lines represent the discrete optimal transport solutions (red mass is split as there are more black points). The black lines represent the Monge map estimator, computed from dual optimal solutions of the discrete problem. It is piecewise constant, defined on the whole plane, and each black line represents the association between a new source input and its image.  \textit{Right:} For large samples, the estimator converges to the population Monge map, illustrating our finite sample $L^1$ bound. Note that, in this case, the population Monge Map is not continuous and our results apply to this setting. Discrete solutions are computed using POT \cite{flamary2021pot}.
    }
    \label{fig:illustration}
\end{figure}

\paragraph{A dual error bound} The key analytical element which allows to devise our finite sample estimate is an error bound for Problem \eqref{eq:preliminary_semi_dual}. We show in \Cref{sec:errorBound} that for compactly supported probability measures $\mu,\nu$, the source $\mu$ with an upper bounded density and $\bar{\varphi}$ solution to \eqref{eq:preliminary_semi_dual}, there is $c>0$ such that for any convex function $\phi$ in an appropriate subclass of $\Gamma_0(\RR^d)$, it holds that
\begin{align}
    c\|\nabla \phi - \nabla \bar{\varphi}\|_{L^1(\mu)}^4 \leq  \int \phi - \bar\varphi d\mu + \int \phi^* - \bar\varphi^* d\nu.
    \label{eq:prelimErrorBound}
\end{align}
Choosing $\phi = \hat{\phi}$,  our Brenier map estimator from empirical measures $\hat{\mu}_n,\hat{\nu}_m$, the right hand side can be further upper bounded by $W_1(\mu,\hat{\mu}_n) + W_1(\nu,\hat{\nu}_m)$ for which finite sample upper bounds exist in expectation in \cite{fournier2015rate}, see \cite{fournier2023convergence} for explicit multiplicative constants. This results in statistical rates roughly of order $O(n^{-\frac{1}{4d}} + m^{-\frac{1}{4d}})$. We do not know if these rates could be improved for the proposed estimator, or for other estimators. The closest lower bound is $n^{-\frac{1}{d}}$, but under  considerably more restrictive conditions on the measures, see Section \ref{sec:comparisonStat}.

The error bound is of independent interest for quadratic optimal transport beyond statistical estimation. It relates for example to stability properties for Monge maps \cite{gigli2011holder} with applications in machine learning \cite{merigot2020quantitative}. The proof combines Carlier's remainder for Fenchel-Young inequality \cite{carlier2023fenchel} with the covering number estimate from \cite{carlier2025quantitative}, which we use as a measure of discontinuity for gradients of Lipschitz convex functions. 

\paragraph{Extensions} The proposed framework and techniques are extended to provide refined statistical bounds for the semi-discrete optimal transport problem in \Cref{sec:semi-discr}. In \Cref{sec:couplings}, quantitative convergence guaranties of empirical optimal couplings toward optimal couplings are given, based on quantitative stability bounds in \Cref{sec:stabilityCouplings}. The consistency of empirical coupling is due to \cite{segers2022graphical}.

\subsection{Related work}

\paragraph{Statistical optimal transport}
Research on the topic has considerably expanded in the last decade. This includes many variations: semi-discrete, entropic regularization, convergence of the optimal transport cost, of the optimal map, etc. We focus on the estimation of quadratic optimal transport Monge maps. We refer to \cite{balakrishnan2025statistical} for a recent review and will comment here on the specificity of our approach and the closest references.

The idea of estimating Monge maps based on the dual optimal transport formulation was explored in \cite{hutter2021minimax} and expanded in \cite{manole2024plugin,divol2025optimal}. This literature deploys the tools of non parametric statistical estimation to tackle the problem. The main drawback of this approach is that it relies on smoothness  of the population transport map, which is required to be an element of a well identified function space, such as that of bi-Lipschitz diffeomorphisms. Regularity of optimal transport is a topic of notable mathematical difficulty and sufficient conditions impose considerable restrictions. The main specificity of our work with respect to this literature is that we do not directly apply nonparametric estimation to $\nabla \varphi$, but rather leverage a new error bound in \eqref{eq:prelimErrorBound}. This allows to shift the regularity requirements from the Monge map $\nabla \varphi$ to the Brenier potential $\varphi$ and leverage known convergence rates for empirical measures \cite{fournier2015rate}. As a consequence our main result holds under explicit and versatile assumptions on the problem: compacity of the supports, absolute continuity of the source, and an upper bound on its density. A striking consequence is that the target Monge map is not even required to be continuous. The only available occurrence of statistical convergence toward possibly discontinuous Monge maps is \cite{pooladian2023minimax} which is limited to the semi-discrete setting. A more detailed discussion on these aspects is given in \Cref{sec:comparisonStat}.

\paragraph{Estimators for Monge maps} A diversity of Monge map estimators have been proposed in the literature with various statistical analyses. These include M-estimators \cite{hutter2021minimax,manole2024plugin,divol2025optimal,chewi2025estimation}, entropic regularization \cite{seguy2018large,pooladian2021entropic}, kernel based estimators \cite{perrot2016mapping,gunsilius2022convergence}, input convex neural networks \cite{makkuva2020optimal}, one nearest neighbors and barycentric projection \cite{manole2024plugin}, convex extrapolation and smoothing \cite{paty2020regularity,hallin2021distribution,de2021consistent}, kernel sums-of-squares \cite{vacher2021dimension,muzellec2021near}.  

Interestingly, our proposed Brenier potential has not been considered explicitly. It is a natural estimator as it is the solution of Problem \eqref{eq:preliminary_semi_dual} with discrete measures and it admits a closed form solution based on the optimal transport linear program dual solutions. As discussed in \Cref{rem:entropicRegularizationLimit}, our estimator corresponds to the limit as entropic regularization vanishes of the estimator proposed in \cite{seguy2018large} and analyzed in \cite{pooladian2021entropic,pooladian2023minimax}. Finally, \cite{vacher2022parameter} explores the distance between gradients of convex potentials and the dual loss, under smoothness condition on the source density, in a statistical setting.

\paragraph{Stability of optimal transport maps}
The error bound in \eqref{eq:prelimErrorBound} translates into a H\"older stability bound for Monge maps. This topic is of interest more broadly for optimal transport 
\cite{gigli2011holder,merigot2020quantitative,li2021quantitative,delalande2023quantitative,letrouit2024gluing}. Our error bound directly implies a similar stability bound, allowing for variations of both source and target probability measures. After finalizing the first version of this manuscript, we became aware of the independent work \cite{merigot2026sharp}, providing improved exponents for this problem using techniques similar to ours, see also the discussion in the conclusion. These exponents turn out to be optimal as shown by \cite{letrouit2025unstable}. This work also provides an error bound similar to \eqref{eq:prelimErrorBound}.
A more precise discussion of the relation to existing art is given in \Cref{sec:connection_literature_stability} and in the conclusion.

\paragraph{Semi-discrete optimal transport} Our estimator is strongly related to Laguerre tesselation, adapted to a discrete setting, a notion which has very strong ties with semi-discrete optimal transport \cite{aurenhammer1987power}. Similar piecewise linear estimators are studied in convex geometry \cite{gu2013variational}, in relation to optimal transport and its applications \cite{lei2019geometric}.

Finally, we specify our results to the semi-discrete setting, that is for $\nu$ a discrete probability measure, showing that it is possible to remove the exponential dependency in the ambient dimension $d$. This is in line with existing statistical results on this topic \cite{pooladian2023minimax,sadhu2024stability}. Statistical aspects for the semi-discrete setting are further discussed in \Cref{sec:semi-discr}.

\subsection{Notation}
Throughout the paper $\|.\|$ will denote the standard Euclidean norm. For any $R>0$, $B_R:=B_d(0,R)$ and $\bar{B}_R:=\bar{B}_d(0,R)$ will denote respectively the $d$-dimensional Euclidean open and closed ball centered in $0$ with radius $R$. For any function $f$ and measure $\mu$, we denote $\graph(f)$ the graph of $f$ and $\supp \mu$ the support of $\mu.$ 
For any function $f:\R^d\rightarrow\R$ we will denote $f^*$ its Fenchel-Legendre transform, that is, for all $y\in\R^d:\:f^*(y)=\sup_{x\in\R^d}\langle x,y \rangle-f(x)$. For any set of points $\{y_i\}_{i\in I}$ we denote $\conv\{y_i,\:i\in I\}$ its convex hull.
We also define $\Gamma_0(\R^d)$, the classical set of proper lower semi-continuous convex functions on $\R^d$, that is:
\begin{equation*}
\Gamma_0(\R^d)=\left\{ \phi:\R^d\longrightarrow\R\cup \{+\infty \},\:\phi\: \text{is convex, l.s.c, proper } \right\}.
\end{equation*}
We call the domain of a function $f$ the set of points where $f$ is finite. For any set $C$, we denote $\delta(.|C)$ the indicator function which values $0$ at $C$ and $+\infty$ elsewhere.
For measure subsets, we introduce the following notation. For any subset $X$ of $\R^d$, we denote $\mathcal{M}_1(X)$ the set of probability measures with support included in $X$. Finally for any $(\mu,\nu)\in\mathcal{M}_1(X)\times\mathcal{M}_1(Y)$ we will denote the set of probability measures with respective marginals $\mu$ and $\nu$ as follows
\begin{equation*}
\Pi(\mu,\nu)=\big\{\pi\in\mathcal{M}_1(X\times Y)\:|\:\pi(A\times Y)=\mu(A),\:\pi(X\times B)=\nu(B),\:\forall A\subset X,B\subset Y\:\text{measurable subsets}\big\}.
\end{equation*}

For any mapping $f:\R^d\to \R^d$, $f_\sharp\mu$ denotes the push-forward of measure $\mu$ by $f$, that is, for any measurable subset $B\subset\R^d$, $f_\sharp\mu(B)=\mu(f^{-1}(B))$.

\section{Preliminaries on optimal transport}
\label{sec:preliminary}
We briefly recall basic concepts in optimal transport as well as important notions for the paper.

\subsection{Primal and dual problems}

The problem in \eqref{eq:preliminary_Monge} does not always have a solution. The Kantorovich relaxation replaces maps by probability couplings with marginals $\mu$ and $\nu$ (\textit{i.e} elements of $\Pi(\mu,\nu)$), as follows
\begin{equation}
    \label{eq:preliminary_Kantorovich}
    \inf_{\gamma\in\Pi(\mu,\nu)}\int_{\R^d\times \R^d}\frac{1}{2}\|x-y\|^2d\gamma(x,y).
\end{equation}
Duality is typically expressed in terms of c-concave functions \cite[Section 1.6.1]{book_santambrogio}. Focusing on the quadratic cost allows us to adopt a classical alternative: expressing duality with usual convexity notions. Both approaches are equivalent up to a change of variables \cite[Proposition 1.21]{book_santambrogio}. Problem \eqref{eq:preliminary_Kantorovich} admits the following dual
\begin{equation}\label{eq:preliminary_dual}
    \sup_{\varphi\in \Gamma_0(\R^d)}\int_{\R^d} \left(\frac{\|x\|^2}{2} - \varphi(x) \right)d\mu(x) + \int_{\R^d} \left(\frac{\|y\|^2}{2} - \varphi^*(y)\right) d\nu(y).
\end{equation}
This formulation is sometimes called "semi-dual", as it has a single decision variable while there are two marginal constraints in the primal. When $\mu$ is absolutely continuous, Brenier's Theorem tells us that the optimal Monge map $T$ in \eqref{eq:preliminary_Monge} is almost-everywhere the gradient of $\varphi \in \Gamma_0(\R^d)$, an optimal solution to \eqref{eq:preliminary_dual}, called a \textit{Brenier potential}. In the absence of continuity when the measures are compactly supported, optimal couplings $\gamma\in \Pi(\mu,\nu)$ exist for the Kantorovich problem \eqref{eq:preliminary_Kantorovich}. Moreover such solution $\gamma$ is supported on the graph of the subdifferential of $\varphi$, \textit{i.e.} $\supp \gamma \subset \graph(\partial \varphi)$, a solution to Problem \eqref{eq:preliminary_dual}. These are classical facts \cite{villani_opt_old_new,ref_villani_topics,book_santambrogio} which we  summarize in the following theorem. We provide a proof for the latter for completeness in \Cref{sec:proogTheoremClassicalDuality}

\begin{theorem}[Duality for quadratic optimal transport]
    Let $\mu,\:\nu$ be compactly supported probability measures. Consider the following problems 
    \begin{equation}\label{PP}\tag{PP}
        \inf_{\gamma\in\Pi(\mu,\nu)}\int_{\R^d \times \R^d}\frac{1}{2}\|x-y\|^2d\gamma(x,y)
    \end{equation}
    and
    \begin{equation}\label{DP}\tag{DP}
        \int_{\R^d} \frac{\|x\|^2}{2} d\mu(x)+ \int_{\R^d} \frac{\|y\|^2}{2}d\nu(y) - \inf_{\varphi\in \Gamma_0(\R^d)} \int_{\R^d} \varphi(x) d\mu(x) + \int_{\R^d}\varphi^*(y) d\nu(y).
    \end{equation}
    We have \eqref{PP} $=$ \eqref{DP} and optimal solutions to \eqref{PP} and \eqref{DP} exist. Moreover the following are equivalent, for any $\gamma \in \Pi(\mu,\nu)$ and $\varphi \in \Gamma_0(\R^d)$  
    \begin{enumerate}
        \item  $\gamma$ is optimal for \eqref{PP} and $\varphi$ is optimal for \eqref{DP}.
        \item $\int\frac{1}{2}\|x-y\|^2d\gamma(x,y) = \int \left(\frac{\|x\|^2}{2} - \varphi(x) \right)d\mu(x) + \int \left(\frac{\|y\|^2}{2} - \varphi^*(y)\right) d\nu(y)$
        \item $\supp \gamma\subset \graph(\partial \varphi)$.
    \end{enumerate}
    Furthermore, $\varphi$ may be restricted to be  $R_\nu$-Lipschitz with $R_\nu=\max_{y\in \supp \nu} \|y\|$.
    Finally, if $\mu$ is absolutely continuous, the solution to \eqref{PP} is unique and defined $\mu$-almost everywhere as $\gamma=(Id,\nabla\varphi)_\sharp\mu$, and $T = \nabla \varphi$ is solution to Problem \eqref{eq:preliminary_Monge}.
    \label{th:classical_duality_theorem}
\end{theorem}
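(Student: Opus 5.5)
We first handle the problems as written and then add the Lipschitz and absolutely continuous refinements. The plan is to use Fenchel--Young to get weak duality and the equivalences, existence from compactness, and strong duality from the classical Kantorovich duality for continuous costs, converted to the convex form.

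\textbf{Weak duality and the equivalences.} For any $\varphi\in\Gamma_0(\R^d)$, the Fenchel--Young inequality gives $\varphi(x)+\varphi^*(y)\ge\langle x,y\rangle$, with equality if and only if $y\in\partial\varphi(x)$. Since $\frac12\|x-y\|^2=\frac{\|x\|^2}{2}+\frac{\|y\|^2}{2}-\langle x,y\rangle$, integrating against any $\gamma\in\Pi(\mu,\nu)$ and using the marginal constraints yields \eqref{PP} $\ge$ \eqref{DP}. The integrals are well defined in $\R\cup\{+\infty\}$ because $\varphi$ and $\varphi^*$ are bounded below by affine functions and the supports are compact. This gives $2\Rightarrow 1$ once strong duality is known.

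For $2\Leftrightarrow 3$, note that the gap equals $\int(\varphi(x)+\varphi^*(y)-\langle x,y\rangle)\,d\gamma$. The integrand is nonnegative and lower semicontinuous, and its zero set is exactly $\graph(\partial\varphi)$, which is closed. The gap therefore vanishes if and only if $\gamma$ is concentrated on this closed set, which is equivalent to $\supp\gamma\subset\graph(\partial\varphi)$. Finally, $1\Rightarrow 2$ follows from strong duality.

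\textbf{Existence and strong duality.} Existence for \eqref{PP} follows because $\Pi(\mu,\nu)$ is weakly compact (tight, since the supports are compact) and the cost is continuous and bounded on $X\times Y$. For strong duality and dual existence, I would restrict the infimum to functions of the form $\varphi(x)=\sup_{y\in Y}\langle x,y\rangle-\psi(y)$ with $Y=\supp\nu$, that is, to convex conjugates of functions supported on $Y$. The double conjugation $\varphi\mapsto\varphi^{**}$ restricted to $Y$ only decreases the objective, so this restriction loses nothing. Every such $\varphi$ is finite, convex and $R_\nu$-Lipschitz, which is the Lipschitz claim.

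Normalizing by $\varphi(0)=0$, Arzelà--Ascoli on a compact set containing $\supp\mu$ gives compactness of a minimizing sequence. Lower semicontinuity of $\varphi\mapsto\int\varphi^*d\nu$ comes from the fact that $\varphi_k\to\varphi$ uniformly on compacts implies $\liminf\varphi_k^*\ge\varphi^*$ pointwise, combined with Fatou (the conjugates are bounded below on $Y$). This yields a minimizer.

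\textbf{Main obstacle: equality of values.} This is the step I expect to be hardest. The route I would try first is to invoke Kantorovich duality for the continuous cost $-\langle x,y\rangle$ on compact $X\times Y$, as in \cite{villani_opt_old_new}. That duality can be obtained from Fenchel--Rockafellar on $C(X)\times C(Y)$ or by a minimax argument. One then performs the change of variables $c$-concave $\leftrightarrow$ convex (\cite[Prop.~1.21]{book_santambrogio}). A self-contained alternative would be to prove the discrete case by LP duality and pass to the limit using the compactness established above.

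\textbf{Absolutely continuous source.} If $\mu\ll$ Lebesgue, the Lipschitz convex $\varphi$ is differentiable $\mu$-a.e. by Rademacher. By item 3, any optimal $\gamma$ is then concentrated on $\{(x,\nabla\varphi(x))\}$, so $\gamma=(Id,\nabla\varphi)_\sharp\mu$. This identification holds for every optimal $\gamma$ with the same $\varphi$, which gives uniqueness.

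Since $T=\nabla\varphi$ satisfies $T_\sharp\mu=\nu$ and attains the value \eqref{PP}, while every admissible map $S$ induces the coupling $(Id,S)_\sharp\mu$ whose cost is at least \eqref{PP}, the map $T$ solves \eqref{eq:preliminary_Monge}.
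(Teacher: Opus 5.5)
Your proposal is correct and follows the paper's proof in its essentials. Strong duality is imported from Kantorovich duality through the convex change of variables, $2\Leftrightarrow 3$ comes from Fenchel--Young and lower semicontinuity of the nonnegative integrand, the Lipschitz claim comes from re-conjugating $\varphi^*$ restricted to a bounded set, and the absolutely continuous case comes from Rademacher's theorem. The differences are minor: you obtain dual attainment by a direct Arzel\`a--Ascoli/Fatou argument rather than reading it off the cited theorem; you restrict $\varphi^*$ to $\supp\nu$ rather than $\bar{B}_{R_\nu}$, which only gives $\tilde\varphi\le\varphi$ and $\tilde\varphi^*\le\varphi^*$ on $\supp\nu$, but this suffices and avoids needing an optimal coupling; and you prove uniqueness by fixing one optimal potential and applying $1\Rightarrow 3$ to every optimal $\gamma$ instead of citing Santambrogio.
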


In this article we will rely on an alternative formulation of this theorem, given below as a lemma.
\begin{lemma}[{Local Lipschitz continuity of Brenier potentials and their conjugate}]
    Let $\mu,\:\nu$ be as in \Cref{th:classical_duality_theorem} with compact support respectively included in  $B_{R_\mu}$ and $B_{R_\nu}$, then optimal Brenier potentials in \eqref{DP} may be chosen in
    \begin{equation*}
        \mathcal{C}_{R_\mu,R_\nu}=\left\{\varphi\in\Gamma_0(\R^d)\:\Big|\:\varphi\:R_\nu\text{-Lipschitz on}\;B_{R_\mu},\:\varphi^*\:R_\mu\text{-Lipschitz on }B_{R_\nu}\right\}.
    \end{equation*}
    In fact, for any $\varphi_0$ optimal in \Cref{th:classical_duality_theorem}, there exists $\varphi_1 \in \mathcal{C}_{R_\mu,R_\nu}$ such that $\varphi_1 = \varphi_0$ on $\supp \mu$ and $\varphi_1^* = \varphi_0^*$ on $\supp \nu$.
    \label{lem:brenier_potentials}
\end{lemma}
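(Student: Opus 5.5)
Starting from an optimal $\varphi_0$ given by \Cref{th:classical_duality_theorem}, I will apply a double convexification restricted to the supports. Let $X=\supp\mu\subset B_{R_\mu}$ and $Y=\supp\nu\subset B_{R_\nu}$, both compact. Define
\[
\psi(y)=\sup_{x\in X}\langle x,y\rangle-\varphi_0(x),\qquad \varphi_1(x)=\sup_{y\in Y}\langle x,y\rangle-\psi(y).
\]
As suprema of affine functions with slopes in $X$ (resp.\ $Y$), both are convex, finite and Lipschitz on all of $\R^d$, with constants $R_\mu$ and $R_\nu$ respectively, provided $\varphi_0$ is finite on $X$ and $\psi$ is finite on $Y$. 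I will justify this finiteness from optimality: $\int\varphi_0\,d\mu+\int\varphi_0^*\,d\nu<\infty$, and the support of an optimal $\gamma$ lies in $\graph(\partial\varphi_0)$. By lower semicontinuity and closedness of the graph of $\partial\varphi_0$, this support projects onto $X$ and $Y$, which places $X\subset\operatorname{dom}\partial\varphi_0$ and $Y\subset\operatorname{dom}\partial\varphi_0^*$.

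Next I compare $\varphi_1$ with $\varphi_0$. Since $\psi\le\varphi_0^*$, we get $\varphi_1\ge\varphi_0^{**}$ restricted to slopes in $Y$. In the other direction, $\varphi_1(x)\le\varphi_0(x)$ for $x\in X$, by taking $x'=x$ inside $\psi$. For $x\in X$, pick $(x,y)\in\supp\gamma$ with $y\in\partial\varphi_0(x)$, so $y\in Y$. Then $\psi(y)\le\varphi_0^*(y)=\langle x,y\rangle-\varphi_0(x)$, which gives $\varphi_1(x)\ge\varphi_0(x)$. Hence $\varphi_1=\varphi_0$ on $X$. Symmetrically, $\psi=\varphi_0^*$ on $Y$: the inequality $\psi\le\varphi_0^*$ always holds, and equality follows from choosing a point $x\in X$ with $y\in\partial\varphi_0(x)$.

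It remains to identify $\varphi_1^*$ with $\psi$ on $Y$. We have $\varphi_1^*=(\psi+\delta(\cdot|Y))^{**}$, the closed convex envelope of $\psi$ on $\conv Y$. This gives $\varphi_1^*\le\psi=\varphi_0^*$ on $Y$. Conversely, $\varphi_1\le\varphi_0$ fails off $X$ in general, so I argue directly for $y\in Y$ with $(x,y)\in\supp\gamma$:
\[
\varphi_1^*(y)\ge\langle x,y\rangle-\varphi_1(x)=\langle x,y\rangle-\varphi_0(x)=\varphi_0^*(y).
\]
So $\varphi_1^*=\varphi_0^*$ on $Y$. Consequently the dual objective is unchanged, $\varphi_1$ is optimal, and $\varphi_1\in\Gamma_0$ is $R_\nu$-Lipschitz everywhere, hence on $B_{R_\mu}$.

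The main obstacle is the Lipschitz property of $\varphi_1^*$ on $B_{R_\nu}$. The function $\varphi_1^*$ is $+\infty$ outside $\conv Y$, which may not contain $B_{R_\nu}$. To fix this I will redefine
\[
\tilde\varphi_1(x)=\max\Bigl(\varphi_1(x),\ \sup_{\|y\|\le R_\nu}\langle x,y\rangle-\chi(y)\Bigr),
\]
where $\chi(y)=\sup_{x\in \bar B_{R_\mu}}\langle x,y\rangle-\varphi_1(x)$ is $R_\mu$-Lipschitz on $\R^d$. Equivalently, I take $\tilde\varphi_1$ to be the conjugate of $\chi+\delta(\cdot|\bar B_{R_\nu})$; its conjugate then equals $\chi$ on $\bar B_{R_\nu}$ and is $R_\mu$-Lipschitz there. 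I would then check $\tilde\varphi_1=\varphi_1$ on $\bar B_{R_\mu}$, using that $\varphi_1$ has slopes in $Y\subset \bar B_{R_\nu}$ so that the biconjugation over balls reproduces it. I would also check $\chi=\varphi_1^*$ on $Y$, using that the maximizing $x$ can be taken in $X$ by the argument above. With these two facts, all properties transfer to $\tilde\varphi_1$.
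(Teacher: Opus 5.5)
Your proof is correct, but it takes a different route from the paper.

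\textbf{The paper's route.} The paper starts from the $R_\nu$-Lipschitz potential supplied by \Cref{th:classical_duality_theorem}, which is built there as $(\varphi^*+\delta(\cdot|\bar B_{R_\nu}))^*$. It then performs one further operation (\Cref{lem:identification_lipschitz}): $\varphi_1=\varphi_0+\delta(\cdot|\bar B_{R_\mu})$.
\begin{itemize}
\item $\varphi_1$ is unchanged on $\bar B_{R_\mu}$.
\item $\varphi_1^*$ is globally $R_\mu$-Lipschitz, because $\operatorname{dom}\varphi_1\subset\bar B_{R_\mu}$.
\item $\varphi_1^*=\varphi_0^*$ on $\partial\varphi_0(B_{R_\mu})\supset\supp\nu$, by the equality case of Fenchel--Young.
\end{itemize}

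\textbf{Your route.} You first do a $c$-transform-style double conjugation restricted to the supports $X,Y$. You then do a second pair of conjugations over the balls, ending with $\tilde\varphi_1=(\chi+\delta(\cdot|\bar B_{R_\nu}))^*$. This is the mirror-image normalization:
\begin{itemize}
\item your potential is finite and globally $R_\nu$-Lipschitz, and its conjugate is $+\infty$ off $\bar B_{R_\nu}$;
\item the paper's potential is $+\infty$ off $\bar B_{R_\mu}$, and its conjugate is globally Lipschitz.
\end{itemize}

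\textbf{What each buys.} Your argument treats an arbitrary optimal $\varphi_0$ directly. The paper's proof picks an $R_\nu$-Lipschitz one, and implicitly relies on the modification in \Cref{th:classical_duality_theorem} preserving values on $\supp\mu$ and conjugate values on $\supp\nu$.

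Your argument is, however, longer than needed. The first step over $X$ and $Y$ is redundant. Set $\chi=(\varphi_0+\delta(\cdot|\bar B_{R_\mu}))^*$ and $\tilde\varphi=(\chi+\delta(\cdot|\bar B_{R_\nu}))^*$ directly. Your verification then goes through verbatim:
\begin{itemize}
\item $\tilde\varphi\le\varphi_0$ on $\bar B_{R_\mu}$ holds by definition of $\chi$;
\item equality on $X$, and $\chi=\varphi_0^*$ on $Y$, both follow from pairing each point of one support, through $\supp\gamma$, with a point of the other support where Fenchel--Young is an equality.
\end{itemize}

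\textbf{Two small corrections.}
\begin{itemize}
\item The fact that $\supp\gamma$ projects onto $X$ and onto $Y$ does not come from lower semicontinuity or closedness of $\graph\partial\varphi_0$. It comes from compactness: $\supp\gamma\subset X\times Y$ is compact, so its projections are closed sets of full marginal measure, as in \Cref{lem:projection_support_coupling}.
\item Finiteness of $\psi$ also uses that $\varphi_0$, being l.s.c.\ and proper, is bounded below on the compact set $X$.
\end{itemize}
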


\begin{proof}
    Pick a $R_\nu$ Lipschitz optimal potential $\varphi_0$ in \Cref{th:classical_duality_theorem} and $\gamma$ an optimal coupling.
    We have $\supp \gamma \subset \graph \partial \varphi$ and $\supp \mu \subset B_{R_\mu}$. Furthermore $\supp \nu \subset \partial \varphi(B_{R_\mu})$ using Lemma \ref{lem:projection_support_coupling} and noticing that $\{y\in\R^d,\:\exists x\in \supp \mu,\:(x,y)\in \supp \gamma \}\subset \partial \varphi(B_{R_\mu})$ using  the third point of Theorem \ref{th:classical_duality_theorem}.
    
    \Cref{lem:identification_lipschitz} ensures that there is $\varphi_1 \in \mathcal{C}_{R_\mu,R_\nu}$ such that $\varphi_1 = \varphi_0$ on $\supp \ \mu$ and $\varphi_1^* = \varphi_0^*$ on $\supp \ \nu$. We deduce that $\int \varphi_1 d\mu = \int \varphi_0 d\mu$ and $\int \varphi_1^* d\nu = \int \varphi_0^* d\nu$ so that $\varphi_1$ is also optimal by point 2 of Theorem \ref{th:classical_duality_theorem}
\end{proof}

\begin{remark}
    Any $\varphi \in \Gamma_0(\R^d)$ Lipschitz over $B_{R_\mu}$ is also Lipschitz over $\overline{B}_{R_\mu}$. This is because Lipschitz continuity allows to extend $\varphi$ from the interior to the boundary, and this extension corresponds to the actual value of $\varphi$ by lower semi-continuity. 
    \label{rem:extension_lipschitz_border}
\end{remark}

\Cref{lem:brenier_potentials} is a regularity result on the solution of problem \eqref{DP}. Its specificity lies in the fact that both $\varphi$ and $\varphi^*$ are considered. Note that both cannot be globally Lipschitz at the same time. Let us mention that Lipschitz continuity of Kantorovich potentials was considered in \cite{mccann2001polar}, and that fine regularity is a crucial topic in optimal transport, starting with the work of Caffarelli \cite{caffarelli1992boundary,caffarelli1992regularity,caffarelli1996boundary}.

\subsection{Optimal transport between finitely supported measures}

Our basic framework for statistical estimation is for finitely supported source and target measures. Throughout this section, we consider the special case when $\hat\mu=\sum_{i=1}^na_i\delta_{x_i}$, $\hat\nu=\sum_{j=1}^mb_j\delta_{y_j}$, where $a_i> 0$ for $i=1, \ldots, n$, $b_j>0$ for $j=1,\ldots, m$, such that $\sum_{i=1}^na_i= \sum_{j=1}^mb_j=1$, and $x_1,\ldots,x_n$, $y_1,\ldots,\:y_m$ are atoms in $\R^d$. 
The optimal transport problem between such discrete measures falls within the scope of \Cref{th:classical_duality_theorem}, but can also be finitely represented, resulting in a linear program. This has been extensively studied, see for example \cite{peyre2019computational} for a recent account of computational aspects.
    
The primal problem \eqref{PP} is written as a finite dimensional linear program over matrices \cite[Section 2.3]{peyre2019computational}:
\begin{equation}
    \min\Big\{
        \langle C,P \rangle\:\Big|\:
        P\in \R_+^{n\times m},\:
        \sum_{j=1}^mP_{i,j}=a_i,\:
        \sum_{i=1}^nP_{i,j}=b_j,\:
        \forall i=1,\ldots,n,\:j=1,\ldots,m
    \Big\},
    \label{eq:PP_discr}
    \tag{PP-discr}
\end{equation}
where $C\in\R^{n\times m}$ is a cost matrix defined as $C_{i,j}=\frac{1}{2}\|x_i-y_j\|^2$ for all $i=1,\ldots,n,\:j=1,\ldots,m$. This is obtained by identifying the matrix $P$ above with a coupling $\gamma$ between $\hat\mu$ and $\hat\nu$ as in \Cref{th:classical_duality_theorem}, with the identification $\int f(x,y) d\gamma(x,y) = \sum_{i,j} f(x_i,y_j) P_{i,j}$ for all measurable $f$.

The corresponding dual linear program is given as follows \cite[Section 2.5]{peyre2019computational}
\begin{equation}
    \max\left\{ 
        \langle f,a \rangle+\langle g,b \rangle \Big|\:
        f\in\R^n,\, g \in\R^{m},\:
        f_i+g_j\leq C_{i,j},\:
        \forall i=1,\ldots,n,\:j=1,\ldots,m
    \right\}.
    \label{eq:DP_discr}
    \tag{DP-discr}
\end{equation}
An application of strong duality ensures that the values of \eqref{eq:PP_discr} and \eqref{eq:DP_discr} are the same and both are attained \cite[Proposition 2.4]{peyre2019computational}. Unlike the coupling matrix $P$ in \eqref{eq:PP_discr}, the dual variables in \eqref{eq:DP_discr} do not have a direct interpretation in terms of convex functions as in \Cref{th:classical_duality_theorem}.
\section{Main results on the estimation of Brenier potentials and Brenier maps}

\subsection{An estimator based on discrete dual solutions}
\label{sec:estimator}

We are now in position to describe our Brenier potential estimator, which is based on the following explicit Brenier potential.
\begin{definition}[Explicit Brenier potential]
    Let $\hat\mu,\hat\nu$ be two discrete measures over $\R^d$, such that $\hat\mu=\sum_{i=1}^na_i\delta_{x_i}$, $\hat\nu=\sum_{j=1}^mb_j\delta_{y_j}$, where $a_i> 0$, $i=1 \ldots n$, $b_j>0$, $j=1,\ldots, m$, $\sum_{i=1}^na_i= \sum_{j=1}^mb_j=1$ and $x_1,\ldots,x_n,\:y_1,\ldots,\:y_m$  atoms in $\R^d$. Let $g \in \R^m$ be an optimal solution for \eqref{eq:DP_discr}, the associated Brenier potential is given by
    \begin{equation}\label{eq:brenier_potential}
        \varphi_g \colon x \in\R^d\mapsto \max_{k=1,\ldots,m}\langle x,y_k \rangle+g_k-\frac{1}{2}\|y_k\|^2.
    \end{equation}
    \label{def:mainEstimator}
\end{definition}
Complementary slackness for linear programming allows to verify the optimality condition from \Cref{th:classical_duality_theorem}.
\begin{lemma}[Optimality of the explicit potential]
	Let $\hat\mu,\hat\nu$ be as in \Cref{def:mainEstimator}, let $P$ be an optimal matrix in the primal \eqref{eq:PP_discr} and $(f,g)$ be an optimal pair of dual variables in \eqref{eq:DP_discr}. Then 
	\begin{align*}
		P_{i,j}>0\implies y_j\in \partial\varphi_g(x_i)
	\end{align*}
	where $\varphi_g$ is given in \Cref{def:mainEstimator}. As a consequence $\varphi_g$ is optimal for \eqref{DP} with $\mu = \hat\mu$, $\nu =  \hat\nu$, and
    \begin{align*}
        \min_{\varphi\in \Gamma_0(\R^d)}\, \int \varphi d\hat\mu + \int\varphi^* d\hat\nu = \min_{g \in \R^m}\, \int \varphi_g d\hat\mu + \int\varphi_g^* d\hat\nu.
    \end{align*}
    \label{lem:explicitBrenierPotential}
\end{lemma}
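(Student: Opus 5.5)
The plan is to use complementary slackness in the linear program pair \eqref{eq:PP_discr}--\eqref{eq:DP_discr}, and then conclude with the equivalence in \Cref{th:classical_duality_theorem}.

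\textbf{Step 1: the subgradient inclusion.} Fix $i$ and set $h_k(x)=\langle x,y_k\rangle+g_k-\frac12\|y_k\|^2$, so that $\varphi_g=\max_k h_k$. Expanding the dual constraint $f_i+g_k\le \frac12\|x_i-y_k\|^2$ gives
\begin{equation*}
h_k(x_i)\le \tfrac12\|x_i\|^2-f_i \qquad \text{for all } k,
\end{equation*}
hence $\varphi_g(x_i)\le \frac12\|x_i\|^2-f_i$. Complementary slackness says that $P_{i,j}>0$ forces $f_i+g_j=C_{i,j}$. In that case $h_j(x_i)=\frac12\|x_i\|^2-f_i$, so $h_j$ is active at $x_i$, that is $h_j(x_i)=\varphi_g(x_i)$. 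Since $h_j$ is an affine minorant of $\varphi_g$ that touches it at $x_i$, its slope $y_j$ belongs to $\partial\varphi_g(x_i)$.

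\textbf{Step 2: optimality of $\varphi_g$.} Let $\gamma$ be the coupling associated with $P$. Its support is $\{(x_i,y_j): P_{i,j}>0\}$, which by Step 1 is contained in $\graph(\partial\varphi_g)$. The function $\varphi_g$ is a finite maximum of affine functions, so it lies in $\Gamma_0(\R^d)$. Point 3 of \Cref{th:classical_duality_theorem} then gives that $\varphi_g$ is optimal for \eqref{DP}, with $\mu=\hat\mu$ and $\nu=\hat\nu$.

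\textbf{Step 3: the displayed min identity.} Every $\varphi_g$ belongs to $\Gamma_0(\R^d)$, so
\begin{equation*}
\inf_{\varphi\in\Gamma_0(\R^d)}\Big(\int \varphi\, d\hat\mu+\int \varphi^*\, d\hat\nu\Big)\le \inf_{g\in\R^m}\Big(\int \varphi_g\, d\hat\mu+\int \varphi_g^*\, d\hat\nu\Big).
\end{equation*}
The reverse inequality holds because the left infimum is attained at $\varphi_{g^\star}$ for an optimal dual $g^\star$, which exists by strong duality for the linear program. Hence both infima are attained and equal, so they are minima.

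\textbf{Expected obstacle.} The only care needed is the bookkeeping in Step 1. One must expand $\frac12\|x_i-y_k\|^2$ correctly so that the dual constraint becomes exactly $h_k(x_i)\le\frac12\|x_i\|^2-f_i$, with equality precisely on the support of $P$. One must also use that $\gamma$ has support equal to the finite set of atoms with positive weight. I expect no deeper difficulty.
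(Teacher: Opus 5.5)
Your proposal is correct and follows essentially the same route as the paper: complementary slackness makes the $j$-th affine piece active at $x_i$, which gives $y_j\in\partial\varphi_g(x_i)$, and point 3 of \Cref{th:classical_duality_theorem} then yields optimality of $\varphi_g$. The differences are cosmetic: you check the subgradient inequality directly from the touching affine minorant rather than invoking $\partial\varphi_g(x)=\conv\{y_j : j\in I(x)\}$, and you write out the displayed min identity, which the paper treats as an immediate consequence.
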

\begin{proof}
    The constraint in \eqref{eq:DP_discr} writes for all $k=1,\ldots,m$, $i = 1,\ldots, n$, $f_i\leq \frac{1}{2}\|x_i-y_k\|^2-g_k$. 
    For any $i=1,\ldots,n,\:j=1,\ldots,m$ such that $P_{i,j}>0$, complementary slackness \cite[Proposition 3.2]{peyre2019computational} implies that $ f_i+g_j=\frac{1}{2}\|x_i-y_j\|^2$, and $f_i=\min_{k=1,\ldots,m}\frac{1}{2}\|x_i-y_k\|^2-g_k$ with minimizing index $j$. This results in the following implications.
    
    \begin{align*}
        \qquad P_{i,j}>0&\implies\min_{k=1,\ldots,m}\frac{1}{2}\|x_i-y_k\|^2-g_k=\frac{1}{2}\|x_i-y_j\|^2-g_j\\
        &\iff \max_{k=1,\ldots,m} \langle x_i,y_k\rangle-\frac{\|y_k\|^2}{2}+g_k=\langle x_i,y_j\rangle-\frac{\|y_j\|^2}{2}+g_j\\
        &\iff \varphi_g(x_i)=\langle x_i,y_j \rangle-\frac{\|y_j\|^2}{2}+g_j\quad\implies\quad y_j\in \partial \varphi_g(x_i),
    \end{align*}
    where we used that, classically (see \textit{e.g.} \cite[Theorem 23.9]{rockafellar1970convex}), $\partial \varphi_g(x_i) = \conv \{y_j,\, \varphi_g(x_i)=\langle x_i,y_j \rangle-\frac{\|y_j\|^2}{2}+g_j\}$ .
    We identify $P$ with an optimal coupling $\gamma\in\Pi(\hat{\mu},\hat{\nu})$ so that $P_{i,j}>0 \iff (x_i,y_j)\in \supp (\gamma)$. This allows us to use the third point of Theorem \ref{th:classical_duality_theorem} showing that $\varphi_g$ is  an optimal Brenier potential for \eqref{DP}. 
\end{proof}
The potential $\varphi_g$ in \Cref{def:mainEstimator} is convex and globally Lipschitz. As such, its gradient is well defined Lebesgue almost everywhere, for example using Rademacher's theorem. 
Setting for all $x \in \R^d$, $I(x) = \{j = 1,\ldots, m| \varphi_g(x) = \langle x,y_j \rangle+g_j-\frac{1}{2}\|y_j\|^2\}$, the (non-empty) set of active indices, we then have $\partial \varphi_g(x) = \conv\{y_j,\, j \in I(x)\}$ for all $x \in \R^d$.
As the maximum of affine functions, $\partial \varphi_g$ is a singleton almost everywhere, and  any selection is a transport map, extended to $\R^d$, for discrete measures. Note that any selection in the subdifferential is measurable. Indeed, by \cite[Theorem 25.5]{rockafellar1970convex}, it coincides  with $\nabla \varphi_g$ on a set of full Lebesgue measure, and, on this set, $\nabla \varphi_g$ is continuous, hence measurable.
Moreover, any selection may not coincide with $\nabla\varphi_g$ only on a Lebesgue-negligible set, hence measurable by completeness. This ensures that our estimator is $\mu$-measurable by absolute continuity of $\mu$ with respect to the Lebesgue measure.
In what follows we will consider the following simple selection.
\begin{definition}[Transport map extension]
    Let $\hat\mu,\hat\nu$ be as in \Cref{def:mainEstimator}, and let $(f,g)$ be optimal in \eqref{eq:DP_discr}.
    Any  selection $\hat T \colon \R^d \to \R^d$, such that $\hat T(x) \in \partial \varphi_g(x)$ for all $x \in \R^d$ is a transport map extension. For example, 
    \begin{equation*}
        \hat T(x) = \frac{\sum_{j \in I(x)} y_j}{\sum_{j \in I(x)} 1}, \qquad x \in \R^d.
    \end{equation*}
\label{def:transportmap_estimator}
\end{definition}

In the following sections, we will consider the transport map defined in \Cref{def:transportmap_estimator} for empirical probability measures $\hat\mu_n,\hat\nu_m$, which defines our proposed estimator.

\begin{remark}
    The estimator proposed in \cite{seguy2018large} and analyzed in \cite{pooladian2021entropic,pooladian2023minimax} is based on the entropic regularized version of \eqref{eq:PP_discr}, a widespread approach in numerical optimal transport 
    \cite{cuturi2013sinkhorn,peyre2019computational}. With entropic regularization, the dual in \eqref{eq:DP_discr} remains very similar, the constraint being replaced by an exponential penalty term in the functional. Using the main result of \cite{cominetti1994asymptotic}, see also \cite{genevay2016stochastic,weed2018explicit}, as the regularization penalty parameter $\epsilon$ vanishes, the primal-dual solutions of the entropic regularized problem converge to primal-dual solutions of the unregularized problem. Combining this with asymptotic properties of exponential weights, the estimator in \Cref{def:transportmap_estimator} is exactly the limit as $\epsilon \to 0$ of the estimator of \cite{seguy2018large, pooladian2021entropic,pooladian2023minimax} based on entropic regularization.

    From a practical perspective, the main advantage of the estimator in \Cref{def:mainEstimator} compared to entropic regularization is that there is no regularization parameter $\epsilon$ to tune, and that it bypasses the hurdles of running a variant of the Sinkhorn algorithm for small $\epsilon$, typically required for transport map estimation, a notorious numerical issue \cite{peyre2019computational,chhaibi2025faster}. On the other hand, having one less hyperparameter to tune may be a disadvantage as a well tuned $\epsilon$ should perform at least as good as the vanishing limit.
    \label{rem:entropicRegularizationLimit}
\end{remark}

\begin{remark}[On the uniqueness of the estimator]

    As is well known in optimal transport, there can be several optimal solutions for the dual vector $g$ in the discrete problem, potentially leading to different Monge map estimators, even when enforcing $g_m=0$ to factor out the invariance up to additive constants.
    
    Uniqueness of dual potentials is of considerable interest from both a computational (e.g., reproducibility) and a statistical perspective, as it plays a role in establishing asymptotic Gaussianity of optimal transport quantities; see, \textit{e.g.}, \cite{del2024central}. However, in the discrete (empirical) case, it is easy to construct examples where the configuration of the supports leads to several optimal dual variables. Even when the target measure is continuous, uniqueness of dual potentials can be difficult to establish; see \cite{staudt2025uniqueness}. 
    
    In practice, when several optimal solutions exist, the particular solution returned depends on the solver and its implementation (e.g., vertices for the simplex algorithm, a centroid of a face for interior-point methods, or a randomly selected solution for stochastic algorithms). Our theory applies to any choice of the optimal dual potentials. We are not aware of generally applicable strategies that would allow one to select a particular optimal vector $g$ in the definition of our Monge map estimator across a wide family of solvers. We conjecture that most discrete configurations only lead to unique dual vectors (up to the usual invariance). We leave a more precise investigation for future work.
\end{remark}

\subsection{Statistical setting and convergence of the Monge map estimator}
\label{sec:mainStatisticalResult}

In this section, we consider two compactly supported population measures, the source having a density. In a statistical setting, they are approximated by empirical measures, obtained from i.i.d samples, as summarized in the following.
\begin{assumption}
    Let $R>0$ and $M>0$ be scalars such that $\mu$, $\nu$ are compactly supported in $B_R$, and $\mu$ has density upper-bounded by $M$. For any positive integers $n$ and $m$, consider i.i.d random samples $X_1\ldots,X_n$  and $Y_1,\ldots,Y_m$ with respective distribution $\mu$ and $\nu$. We then define the empirical probability measures $\hat{\mu}_n=\frac{1}{n}\sum_{i=1}^n\delta_{X_i}$ and $\hat{\nu}_m=\frac{1}{m}\sum_{i=1}^m\delta_{Y_i}$. The Monge map estimator is $\hat{T}_{n,m}$ as in \Cref{def:transportmap_estimator} for empirical discrete measures $\hat{\mu}_n$ and $\hat{\nu}_m$.
    \label{Assumption_1}
\end{assumption}

Our main result relies on a novel error bound, adapted to our non-smooth estimator. Complete proof details  can be found in \Cref{sec:errorBound}.

\begin{theorem}[\Cref{cor:simplerForm2}]
For any $d,R,M$ there is $C$ positive such that the following holds. Let $\mu$, $\nu$ be probability measures compactly supported in $B_R$ such that $\mu(A) \leq M \lambda(A)$ for any measurable set $A \subset \RR^d$. Let $\bar{\varphi}$ be an $R$-Lipschitz Brenier potential for $\mu$ and $\nu$, optimal for \eqref{DP} as in \Cref{th:classical_duality_theorem}.  For any $\hat{\mu}$, $\hat{\nu}$ supported on $B_R$ with $\phi_0 \in \Gamma_0(\R^d)$ an associated  Brenier potential which is $2R$-Lipschitz on $B_{2R}$, we have
    \begin{align*}
       \int_{\R^d} \| \nabla \bar{\varphi}(x) - \nabla \phi_0(x)\| d\mu(x) &\leq C\left(W_1(\mu,\hat{\mu}) + W_1 (\nu, \hat{\nu})\right)^{\frac{1}{4}}. 
    \end{align*}    
    \label{th:simplerForm2}
\end{theorem}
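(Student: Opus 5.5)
The argument has two parts. First, an error bound for the population semi-dual functional. Second, a control of the suboptimality of $\phi_0$ in terms of $W_1$ distances.

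Define $J(\phi)=\int\phi\,d\mu+\int\phi^*\,d\nu$. The target error bound \eqref{eq:prelimErrorBound} reads $c\|\nabla\phi-\nabla\bar\varphi\|_{L^1(\mu)}^4\le J(\phi)-J(\bar\varphi)$. It should hold for every $\phi\in\Gamma_0(\R^d)$ that is $2R$-Lipschitz on $B_{2R}$, with $c$ depending only on $d,R,M$.

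\textbf{Reduction to the suboptimality gap.} Given the error bound, the rest is a perturbation estimate. Since $\phi_0$ is optimal for $(\hat\mu,\hat\nu)$,
\begin{align*}
J(\phi_0)-J(\bar\varphi)&=\Big(\int\phi_0\,d(\mu-\hat\mu)+\int\phi_0^*\,d(\nu-\hat\nu)\Big)+\Big(\hat J(\phi_0)-\hat J(\bar\varphi)\Big)\\
&\quad+\Big(\int\bar\varphi\,d(\hat\mu-\mu)+\int\bar\varphi^*\,d(\hat\nu-\nu)\Big),
\end{align*}
where $\hat J(\phi)=\int\phi\,d\hat\mu+\int\phi^*\,d\hat\nu$. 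The middle term is $\le 0$.

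For the outer terms I use Kantorovich--Rubinstein duality. This requires Lipschitz bounds on $B_R$ for $\phi_0$, $\phi_0^*$, $\bar\varphi$ and $\bar\varphi^*$, which come from the hypotheses together with \Cref{lem:brenier_potentials}. I may need to replace the potentials by versions in $\mathcal{C}_{R,2R}$ that agree on the relevant supports; this leaves all integrals unchanged and still gives a.e.\ equal gradients on $\supp\mu$ where needed. Some care is required for $\phi_0^*$: its Lipschitz constant on $B_R$ is bounded by $2R$ once $\phi_0$ is modified outside $B_{2R}$, as in \Cref{lem:identification_lipschitz}. The outcome is $J(\phi_0)-J(\bar\varphi)\le C'(W_1(\mu,\hat\mu)+W_1(\nu,\hat\nu))$, and taking fourth roots concludes.

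\textbf{The error bound (main obstacle).} I follow the route sketched in the introduction.

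Step 1. Carlier's quantitative Fenchel--Young remainder: for $\phi$ convex and $L$-Lipschitz on a ball, and $y=\nabla\bar\varphi(x)$,
\[
\phi(x)+\phi^*(y)-\langle x,y\rangle\ \gtrsim\ \|\nabla\phi(x)-y\|^2 \quad\text{(suitably localized).}
\]
The localization is needed because $\phi$ is not smooth, so the quadratic lower bound only holds on a neighbourhood of $x$ whose size is governed by how much $\nabla\phi$ oscillates nearby.

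Step 2. Integrate against the optimal coupling $(Id,\nabla\bar\varphi)_\sharp\mu$. Since $\bar\varphi(x)+\bar\varphi^*(y)=\langle x,y\rangle$ on its support, $J(\phi)-J(\bar\varphi)$ equals the integral of the Fenchel--Young gap of $\phi$.

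Step 3. Control the discontinuity of $\nabla\phi$. Using the covering-number estimate of \cite{carlier2025quantitative}, the set where $\nabla\phi$ oscillates by more than $\delta$ at scale $r$ has Lebesgue measure $\lesssim r/\delta$ times a constant depending on $R,d$. The density bound $M$ turns this into a $\mu$-measure bound.

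Step 4. Split $\|\nabla\phi-\nabla\bar\varphi\|_{L^1(\mu)}$ into the good set, where the localized remainder gives a quadratic lower bound, and the bad set, whose mass is small by Step 3. On the good set Cauchy--Schwarz applies. Optimizing over the scale $r$ and threshold $\delta$ should yield the exponent $4$.

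The delicate parts are getting this balance right and making the localized remainder uniform with constants depending only on $d,R,M$.
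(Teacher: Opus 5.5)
This is essentially the paper's argument. Your reduction is exactly the proof of \Cref{cor:simplerForm2}: the three-term split, optimality of $\phi_0$ for $(\hat\mu,\hat\nu)$, and Kantorovich--Rubinstein after passing to $\phi_0+\delta(\cdot|\bar{B}_{2R})$ and $\bar\varphi+\delta(\cdot|\bar{B}_{2R})$ via \Cref{lem:identification_lipschitz}. Your Steps 1--4 are the proof of \Cref{th:dualErrorBound} with $q=1$, as packaged in \Cref{cor:simplerForm}.

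The one point to make precise is the ``suitable localization'' in Step 1. The scale there is a free parameter, not something dictated by the oscillation. Set $z_\lambda(x)=\prox_{\lambda\phi}(x+\lambda\nabla\bar\varphi(x))$ and $v_\lambda(x)=\nabla\bar\varphi(x)+(x-z_\lambda(x))/\lambda\in\partial\phi(z_\lambda(x))$. Then \Cref{th:Carlier} gives $\Delta:=J(\phi)-J(\bar\varphi)\ge\lambda\int\|\nabla\bar\varphi-v_\lambda\|^2\,d\mu$, with $\|z_\lambda(x)-x\|\le 2\lambda L=:\eta$. The oscillation enters only additively, through $\|v_\lambda(x)-\nabla\phi(x)\|\le\mathrm{diam}\,\partial\phi(B(x,\eta))$. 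This yields $\|\nabla\bar\varphi-\nabla\phi\|_{L^1(\mu)}\le\sqrt{2L\Delta/\eta}+\alpha+L^2MC_{R,d}\,\eta/\alpha$, which optimizes to $\Delta^{1/4}$.
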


Our main statistical result for the proposed estimator is an application of \Cref{th:simplerForm2}. 
\begin{theorem}
    Under \Cref{Assumption_1} for all $M>0,\:R>0$ and $d\in \mathbb{N}^*$ there exists $C>0$ such that the following holds. Let  $\bar{\varphi}$ be an optimal Brenier potential, optimal in \eqref{DP} for measures $\mu$ and $\nu$. For measures $\hat{\mu}_n$ and $\hat{\nu}_m$, $n$, $m\in \mathbb{N}^*$. Then 
    \begin{equation*}
        \mathbb{E}\left[\int_{\R^d}\|\nabla \bar{\varphi}(x)-\hat{T}_{n,m}(x)\|d\mu(x)\right]\leq C
        \begin{cases}
            \left( n^{-1/2}+m^{-1/2}\right)^{1/4} & \text{if}\: d=1 ,\\
            \left(n^{-1/2}\log(1+n)+m^{-1/2}\log(1+m)\right)^{1/4} & \text{if}\: d=2,\\
            \left( n^{-1/d} +m^{-1/d}\right)^{1/4} & \text{if } d > 2.
        \end{cases}
    \end{equation*}
    \label{th:main_stat_result}
\end{theorem}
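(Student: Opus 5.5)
The plan is to apply \Cref{th:simplerForm2} pointwise in the randomness to the explicit potential $\varphi_g$ of \Cref{def:mainEstimator}, built from the empirical measures $\hat\mu_n,\hat\nu_m$. Then we take expectations, use Jensen's inequality for the concave map $t\mapsto t^{1/4}$, and finish with the known expected rates for $W_1$ between a measure and its empirical version \cite{fournier2015rate,fournier2023convergence}.

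\textbf{Step 1: hypotheses of \Cref{th:simplerForm2}.} By \Cref{lem:explicitBrenierPotential}, $\varphi_g$ is an optimal Brenier potential for $(\hat\mu_n,\hat\nu_m)$. It is a maximum of affine functions with slopes $y_k=Y_k\in B_R$, so it is globally $R$-Lipschitz, and in particular $2R$-Lipschitz on $B_{2R}$. Almost surely the samples lie in $B_R$, so $\hat\mu_n,\hat\nu_m$ are supported in $B_R$.

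For the population potential, $\bar\varphi$ is an arbitrary optimal potential. By \Cref{lem:brenier_potentials} it can be replaced by some $\varphi_1\in\mathcal C_{R,R}$ that agrees with $\bar\varphi$ on $\supp\mu$. I will then argue that $\nabla\bar\varphi=\nabla\varphi_1$ $\mu$-a.e. Since $\mu$ is absolutely continuous, both gradients equal the unique Monge map of \Cref{th:classical_duality_theorem} $\mu$-a.e. The same uniqueness statement covers the technicality that $\bar\varphi$ might only be Lipschitz on $B_R$ rather than globally; if needed, I would use a globally $R$-Lipschitz extension, which \Cref{th:classical_duality_theorem} allows. The density hypothesis $\mu(A)\le M\lambda(A)$ is exactly the upper bound on the density assumed in \Cref{Assumption_1}.

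\textbf{Step 2: replacing the gradient by the estimator.} $\hat T_{n,m}(x)\in\partial\varphi_g(x)$ for every $x$, and $\partial\varphi_g(x)=\{\nabla\varphi_g(x)\}$ Lebesgue-a.e., hence $\mu$-a.e. by absolute continuity. Therefore $\int\|\nabla\bar\varphi-\hat T_{n,m}\|d\mu=\int\|\nabla\bar\varphi-\nabla\varphi_g\|d\mu$. \Cref{th:simplerForm2} then gives, almost surely,
\[
\int\|\nabla\bar\varphi-\hat T_{n,m}\|d\mu\le C\bigl(W_1(\mu,\hat\mu_n)+W_1(\nu,\hat\nu_m)\bigr)^{1/4}.
\]
A small measurability point arises here: $g$ may depend on the solver. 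Since the inequality holds for every optimal $g$, I would take expectations of a measurable selection, or equivalently bound by the right-hand side, which is measurable.

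\textbf{Step 3: taking expectations.} Jensen's inequality gives
\[
\mathbb E[\cdot]\le C\bigl(\mathbb E W_1(\mu,\hat\mu_n)+\mathbb E W_1(\nu,\hat\nu_m)\bigr)^{1/4}.
\]
For measures supported in $B_R$, \cite{fournier2015rate} (with explicit constants in \cite{fournier2023convergence}) gives $\mathbb E W_1(\mu,\hat\mu_n)\lesssim_{d,R} n^{-1/2}$ for $d=1$, $n^{-1/2}\log(1+n)$ for $d=2$, and $n^{-1/d}$ for $d>2$, and likewise for $\nu$ with $m$. Absorbing constants yields the three cases.

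The only delicate point is the identification in Step 1: ensuring an arbitrary optimal $\bar\varphi$ satisfies the Lipschitz requirement of \Cref{th:simplerForm2} up to $\mu$-null modifications of its gradient. I expect uniqueness of the Monge map to settle this.
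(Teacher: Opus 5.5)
Your proposal is correct and follows the paper's proof essentially step by step. You apply \Cref{th:simplerForm2} to the globally $R$-Lipschitz potential $\varphi_g$, whose subdifferential selection $\hat T_{n,m}$ equals $\nabla\varphi_g$ $\mu$-a.e.; you pass to expectations by Jensen, where your concavity of $t\mapsto t^{1/4}$ is equivalent to the paper's convexity of $t\mapsto t^4$; and you conclude with the $W_1$ rates of \cite{fournier2015rate}. Your extra Step 1 argument replaces an arbitrary optimal $\bar\varphi$ by a globally $R$-Lipschitz optimal potential with the same gradient $\mu$-a.e., via uniqueness of the Monge map. This correctly handles the $R$-Lipschitz hypothesis on $\bar\varphi$ in \Cref{th:simplerForm2}, which the paper applies without comment.
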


\begin{proof}
    From Definition \ref{def:transportmap_estimator} there exists $g\in \R^m$ and a function $\varphi_g:x\mapsto\max_{i=1,\ldots,m}\langle x,y_i \rangle+g_i-\frac{\|y_i\|^2}{2}$ for which $\hat{T}_{n,m}$ is a selection of its subdifferential.  This function is globally $R$-Lipschitz (in particular $2R$-Lipschitz on $B_{2R}$). We can thus apply \Cref{th:simplerForm2} to this Brenier potential along with Jensen's inequality  and obtain 
    \begin{align*}
        \left[\mathbb{E} \left(\int_{\R^d}\|\nabla \bar{\varphi}(x)-\hat{T}_{n,m}(x)\|d\mu(x)\right)
        \right]^4&\leq \mathbb{E}\left[ \left(\int_{\R^d}\|\nabla \bar{\varphi}(x)-\hat{T}_{n,m}(x)\|d\mu(x)\right)^4\right] \\
        &\leq C^4\mathbb{E}\left[\left(W_1(\mu,\hat{\mu}_n) + W_1 (\nu, \hat{\nu}_m)\right)\right]
    \end{align*}
    where $C$ is the multiplicative constant of Corollary \ref{cor:simplerForm2}.
    We now apply \cite[Main Theorem]{fournier2015rate}. For any integer $q\geq 3$ we have the following inequality 
    \begin{equation*}
        \mathbb{E}\left[W_1(\mu,\hat{\mu}_n)\right] \leq K_{d,q}\left(\int_{\R^d}\|x\|^qd\mu(x)\right)^{1/q}\times
            \begin{cases}
            n^{-1/2}+n^{-1+1/q}& \text{if}\: d=1 ,\\
            n^{-1/2}\log(1+n)+n^{-1+1/q} & \text{if}\: d=2,\\
            n^{-1/d} +n^{-1+1/q} & \text{if } d > 2
        \end{cases}
    \end{equation*}
    where $K_{d,q}>0$ is a constant depending only on $d$ and $q$ given in \cite{fournier2015rate}. We choose $q=3$ for example and upper bound the dominated term $n^{-2/3}$ by $n^{-1/2}$. This yields the result.
    
\end{proof}

\begin{remark}
    We also describe an $L^2$ error bound in \Cref{cor:simplerform2_L2}. Therefore similar statistical rates as in \Cref{th:main_stat_result} also hold in $L^2$, with an exponent $1/5$.
\end{remark}

\begin{remark}
    The previous theorem can easily be adapted to the case where only one measure is empirical. For example, in Section \ref{sec:semi-discr}, we consider the semi-discrete case in which $\mu$ is approximated by samples and $\nu$ is a fixed discrete probability measure.
\end{remark}

\begin{remark}
    Let us consider the case where $\mu$ admits an $L^1$ Poincar\'e inequality, in the sense of \cite[Equation (1.2)]{bakry2008simple}. This holds true for example if $\mu$ has a log-concave density $\exp(-V)$ for a $C^2$, convex lower bounded $V$ \cite[Corollary 1.9]{bakry2008simple} or a bounded density away from $0$ on a convex domain \cite{acosta2004optimal}. In these cases, the explicit empirical Brenier potential in \Cref{def:mainEstimator} converges to a Lipschitz optimal Brenier potential $\varphi$ from \Cref{th:classical_duality_theorem}, up to an additive term, at the same rate as in \Cref{th:main_stat_result}. Indeed, any Lipschitz function can be approximated, together with its gradient, by a smooth function. \Cref{th:main_stat_result} allows to control the $L^1$ distance of gradients, and Poincar\'e inequality gives, up to an additive constant, a control of the $L^1$ distance of potentials. The resulting rates on convergence of potentials are probably suboptimal.
    \label{rem:poincare}
\end{remark}

\subsection{Comparison with existing art on Monge map estimation}
\label{sec:comparisonStat}

For simplicity,  we discuss the case $n=m$ and $d>2$ corresponding to case 3 of \Cref{th:main_stat_result}. Our convergence rate is related to that of the empirical Wasserstein distance. It has been established in \cite{fournier2015rate} that $\mathbb{E}[W_1(\mu,\hat{\mu}_n)]=O\left( n^{-\frac{1}{d}}\right)$, and this rate is attained for the uniform distribution on the hypercube. Our rate is similar, up to the $1/4$ exponent, which arises from the stability result of \Cref{cor:simplerForm2} (see also \Cref{sec:connection_literature_stability}). The literature on convergence rates of transport map estimators is vast, we start with \cite{manole2024plugin} which describes $n^{-1/d}$ rates in $L^2$ norm. They are obtained under the following assumption. 
\begin{assumption}[Monge diffeomorphism assumption]
Let $\mu$ and $\nu$ be two compactly supported measures. There exists an optimal Brenier potential in \eqref{DP} whose gradient is a bi-Lipschitz diffeomorphism.
    \label{assumption:bi_lip_diffeo}
\end{assumption}
\begin{remark}\label{rem:bi_lip_diffeo}
    In \cite{manole2024plugin} this assumption is written as a condition on the Hessian of the potential given by $\frac{1}{\lambda}I_d\preceq \nabla^2\varphi \preceq \lambda I_d $, for some $\lambda>0$.
\end{remark}

The obtained rate in \cite{manole2024plugin} is min-max optimal against the Monge diffeomorphisms class. Several estimators are proposed. One is based on one-nearest neighbor combined with barycentric projection, computationally efficient, but not the gradient of a convex function. The second is a least-squares estimator constrained to the class of gradients of convex functions, with a computational overhead compared to mere computation of the empirical Wasserstein distance, and in addition, for which $\lambda$ needs to be known. 

In this context, our estimator $\hat{T}_{n,m}$ and its analysis in \Cref{th:main_stat_result} has substantial advantages described in the following.
\begin{enumerate}
    \item Since $\hat{T}_{n,m}$ is the sub-gradient of a convex function, it serves as a natural estimator for Monge maps as it is the smallest class of functions containing all such maps.
    \item It does not require any computation beyond dual optimal $g\in\R^m$ in \Cref{def:mainEstimator} from the discrete dual problem. This is given as a byproduct for many solvers.
    \item Our assumptions on the population Brenier potentials are much more applicable compared to \Cref{assumption:bi_lip_diffeo}. In particular we do not even require continuity of the Monge map. We are not aware of any quantitative analysis in this setting.
\end{enumerate}
Yet, our analysis retains statistical rates similar to the Wasserstein rate up to the $1/4$ exponent which is independent of the dimension.  It would be interesting to know if this rate could be further refined to $n^{-1/d}$ for our estimator under \Cref{assumption:bi_lip_diffeo}, this question is left for future work.

Beyond \cite{manole2024plugin}, let us mention other works. The first contribution to consider statistical estimation of Monge maps based on the dual transport problem is \cite{hutter2021minimax}, which also works under \Cref{assumption:bi_lip_diffeo}. The authors establish min-max results for transport map estimators whose convergence rate approaches $n^{-1/2}$ in $L^2$ norm for Monge maps of high differentiability order. These estimators become computationally expensive as the dimension grows. Another example is \cite{pooladian2021entropic} which considers entropic transport maps, and \cite{divol2025optimal} for estimation in abstract function spaces (e.g. shallow neural networks and RKHS), both under \Cref{assumption:bi_lip_diffeo}. We are not aware of any quantitative analysis beyond this assumption, in particular allowing for possibly discontinuous Monge transport maps, except \cite{pooladian2023minimax} which is limited to the semi-discrete case.

Let us now discuss explicit sufficient conditions on the source and target measures ensuring that \Cref{assumption:bi_lip_diffeo}, given in abstract form, holds true. This is actually one of the important challenges in optimal transport beyond statistical estimation, see \cite[Remark 3]{hutter2021minimax}, \cite[Section 2]{manole2024plugin} and \cite[Section 12]{villani_opt_old_new}. The most widely known sufficient conditions on the measures is Caffarelli's celebrated result on smoothness of Monge maps \cite{caffarelli1992boundary,caffarelli1992regularity,caffarelli1996boundary} (summarized in \cite[Theorem 12.50]{villani_opt_old_new}). It states that, if both the source and target measures are supported on compact convex sets with H\"older continuous densities, bounded away from $+ \infty$ and $0$, then the Monge diffeomorphism assumption holds true. The scope of applicability is much narrower compared to ours: compactly supported measures, one of them with an upper bounded density.

\subsection{The semi-discrete case}\label{sec:semi-discr}
In this section, we consider an absolutely continuous source measure $\mu$ and a fixed discrete target measure $\nu$, a setting known as semi-discrete optimal transport, as summarized in the following assumption.
\begin{assumption}
    Let $R>0$ and $M>0$, $\mu$, $\nu$ be supported in $B_R$, such that $\mu$ has density upper-bounded by $M$. Assume further that $\nu=\sum_{j=1}^m b_j\delta_{y_i}$ for some fixed integer $m>1$, support points $y_1,\ldots,y_m\in \R^d$ and weight vector $b_1,\ldots, b_m > 0$ such that $\sum_{j=1}^m b_j = 1$. For any positive integers $n$, consider i.i.d random samples $X_1,\ldots,X_n$ distributed according to $\mu$. We then define the empirical measure $\hat{\mu}_n=\frac{1}{n}\sum_{i=1}^n\delta_{X_i}$. Our Monge map estimator is $\hat{T}_n$ as in \Cref{def:transportmap_estimator} for discrete measures $\hat{\mu}_n$ and $\nu$.
    \label{assumption:semi_discrete}
\end{assumption}
Using similar ideas as in \Cref{lem:explicitBrenierPotential}, the dual formulation in \Cref{th:classical_duality_theorem} in the semi-discrete setting has a finite dimensional equivalent expression. This type of argument is typical of the semi-discrete setting \cite{levy2024large} and is most often presented with Kantorovich potentials. We use here the dual framework of \Cref{th:classical_duality_theorem} and present proof details for completeness.

\begin{lemma}
    Let $\mu$ be a probability measure on $\R^d$, and $\nu$ be as in \Cref{assumption:semi_discrete}, both with compact support in $B_R$. For any $g \in \R^m$, let $\varphi_g \colon x \mapsto \max_{k=1,\ldots,m}\langle x,y_k \rangle+g_k-\frac{1}{2}\|y_k\|^2$ as in \Cref{def:mainEstimator}.
    The following three minimization problems have the same value, each of them being attained:
    \begin{align}
        \min_{\varphi \in \Gamma_0(\R^d)} &\int \varphi d\mu + \int \varphi^* d\nu \label{eq:semiDiscreteSemiDual1}\\ 
        \min_{g \in \R^m} \,\,\,\,
        &\int  \varphi_g d\mu + \int \varphi_g^* d\nu \label{eq:semiDiscreteSemiDual2}\\
        \min_{g \in \R^m} \,\,\,\,&\int  \varphi_g d\mu + \sum_{j=1}^m b_j \left(\frac{1}{2} \|y_j\|^2- g_j\right).\label{eq:semiDiscreteSemiDual3}
    \end{align}
    Furthermore, for $g\in\R^m$ optimal in \eqref{eq:semiDiscreteSemiDual3}, we have $\varphi^*_{g} (y_j) =\frac{1}{2} \|y_j\|^2- g_j$ for all $j = 1,\ldots, m$, and $g_j$ can be chosen in $[-3R^2,3R^2]$, for all $j=1,\ldots,m$, with $g_m = 0$.
    \label{lem:semidiscreteSemidual} 
\end{lemma}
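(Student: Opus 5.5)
The plan is to prove the chain of inequalities $\eqref{eq:semiDiscreteSemiDual1} \le \eqref{eq:semiDiscreteSemiDual2} \le \eqref{eq:semiDiscreteSemiDual3}$, and then close it with a reduction showing that any $\varphi\in\Gamma_0(\R^d)$ can be replaced by some $\varphi_g$ with no larger objective in \eqref{eq:semiDiscreteSemiDual1}.

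\textbf{Easy inequalities.} The first inequality holds because each $\varphi_g$ is a finite maximum of affine functions, hence lies in $\Gamma_0(\R^d)$. For the second, the Fenchel--Young inequality gives, for every $j$,
\[
\varphi_g^*(y_j) \ge \langle x, y_j\rangle - \varphi_g(x)
\]
for all $x$. On the other hand, $\varphi_g(x) \ge \langle x,y_j\rangle + g_j - \tfrac12\|y_j\|^2$, which yields $\varphi_g^*(y_j) \le \tfrac12\|y_j\|^2 - g_j$. Integrating against $\nu$ gives \eqref{eq:semiDiscreteSemiDual2} $\le$ \eqref{eq:semiDiscreteSemiDual3} for every fixed $g$.

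\textbf{Reduction.} Take any $\varphi\in\Gamma_0(\R^d)$ with finite objective, so that $\varphi^*(y_j)<\infty$ for all $j$. Set $g_j := \tfrac12\|y_j\|^2 - \varphi^*(y_j)$. Since $\varphi = \varphi^{**} \ge \langle\cdot,y_j\rangle - \varphi^*(y_j)$, we get $\varphi \ge \varphi_g$ pointwise. With this choice, $\sum_j b_j(\tfrac12\|y_j\|^2 - g_j) = \int\varphi^* d\nu$. Hence the objective of \eqref{eq:semiDiscreteSemiDual3} at this $g$ is at most the objective of \eqref{eq:semiDiscreteSemiDual1} at $\varphi$. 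Together with the easy inequalities, all three infima coincide.

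\textbf{Attainment.} \Cref{th:classical_duality_theorem} gives an optimal $\bar\varphi$ for \eqref{eq:semiDiscreteSemiDual1}. The associated $g$ from the reduction then attains \eqref{eq:semiDiscreteSemiDual3}, and, by the chain of inequalities, also \eqref{eq:semiDiscreteSemiDual2}.

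\textbf{Identity $\varphi_g^*(y_j)=\tfrac12\|y_j\|^2-g_j$ at an optimal $g$.} For any $g$ optimal in \eqref{eq:semiDiscreteSemiDual3}, set $g'_j := \tfrac12\|y_j\|^2 - \varphi_g^*(y_j) \ge g_j$. The reduction applied to $\varphi_g$ gives $\varphi_{g'} \le \varphi_g$, and by monotonicity of the max, $\varphi_{g'}\ge\varphi_g$, so the two coincide. The objective of \eqref{eq:semiDiscreteSemiDual3} at $g'$ therefore changes by $-\sum_j b_j(g'_j - g_j)\le 0$. Optimality of $g$ and $b_j>0$ force $g'=g$, which is the claimed identity.

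\textbf{Bounds on $g$ (main obstacle).} The objective is invariant under $g\mapsto g+c\mathbf 1$, so we may normalize $g_m=0$. Each $g_j$ equals $\tfrac12\|y_j\|^2 - \varphi_g^*(y_j)$, and we want $\lvert g_j\rvert \le 3R^2$. I would first try to show that, at an optimum, every $y_j$ has a nonempty active region meeting $\supp\mu$; failing that, I would replace $\varphi_g$ by its Lipschitz modification from \Cref{lem:brenier_potentials}. The argument would then proceed in two steps.
\begin{enumerate}
\item Every $j$ is active for some point in $\bar B_R$. Otherwise mass $b_j$ could not be transported, contradicting optimality, i.e.\ $y_j\in\partial\varphi_g(\bar B_R)$; here I expect to use point 3 of \Cref{th:classical_duality_theorem} together with $\supp\nu \subset \partial\varphi(B_R)$ as in the proof of \Cref{lem:brenier_potentials}.
\item If $j$ and $k$ are active at points $x_j$ and $x_k$ respectively, comparing the affine pieces there gives
\[
g_j - g_k \le \langle x_j, y_k - y_j\rangle + \tfrac12\|y_j\|^2 - \tfrac12\|y_k\|^2 \le 2R^2 + \tfrac12 R^2 \le 3R^2 .
\]
Taking $k=m$ and then swapping the roles of $j$ and $m$ yields $\lvert g_j\rvert\le 3R^2$.
\end{enumerate}
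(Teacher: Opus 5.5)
Your proof is correct. It uses the same ingredients as the paper, organized more economically.

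\textbf{Equality of values and the identity.} The paper proves \eqref{eq:semiDiscreteSemiDual1}$=$\eqref{eq:semiDiscreteSemiDual2} with exactly your reduction $g_j=\frac12\|y_j\|^2-\varphi^*(y_j)$. It then obtains \eqref{eq:semiDiscreteSemiDual2}$=$\eqref{eq:semiDiscreteSemiDual3} and the identity $\varphi_g^*(y_j)=\frac12\|y_j\|^2-g_j$ through a separate Claim. There the non-tight coordinates are raised one at a time to $g_q=\min_x \varphi_{\bar g}(x)-\langle x,y_q\rangle+\frac12\|y_q\|^2$, using attainment of polyhedral minima, and $\varphi_g=\varphi_{\bar g}$ is checked by a case analysis. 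That minimum is precisely $\frac12\|y_q\|^2-\varphi_{\bar g}^*(y_q)$, so your $g'$ is the output of the paper's recursion obtained in one step. Your use of $\varphi_g=\varphi_g^{**}$ together with monotonicity of $g\mapsto\varphi_g$ replaces the case analysis.

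You also observe that the reduction already yields \eqref{eq:semiDiscreteSemiDual3}$\le$\eqref{eq:semiDiscreteSemiDual1}, because the reduced $g$ makes $\sum_j b_j(\frac12\|y_j\|^2-g_j)=\int\varphi^*d\nu$ exactly. So the Claim is not needed for the equality of values, only for the identity. Your argument ($g'\ge g$, same potential, $b_j>0$) gives the identity for every optimal $g$; the paper leaves this implicit in the strictness part of its Claim.

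\textbf{The bound on $g$.} The paper applies \Cref{lem:brenier_potentials} to $\varphi_g$. This produces a modification whose conjugate is $R$-Lipschitz on $B_R$ and agrees with $\varphi_g^*$ on $\supp\nu$, from which it reads off $|g_i-g_m|\le R^2+R\|y_i-y_m\|$. You instead use directly the fact that lemma rests on, namely that each atom lies in $\partial\varphi_g(\supp\mu)$, and compare affine pieces. This is more self-contained. Two small points:
\begin{enumerate}
\item The step from $y_j\in\partial\varphi_g(x)$ to ``$j$ is active at $x$'' relies on the identity you proved just before. Without it, $y_j$ could lie in the convex hull of other active atoms without $j$ being active. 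Alternatively, your Step 1 needs no coupling at all. If the $j$-th piece were strictly below $\varphi_g$ on the compact set $\supp\mu$, raising $g_j$ by a small $\epsilon$ would leave $\varphi_g$ unchanged on $\supp\mu$ and decrease \eqref{eq:semiDiscreteSemiDual3} by $b_j\epsilon$, contradicting optimality.
\item Activity of $j$ at $x_j$ gives a \emph{lower} bound on $g_j-g_k$. The upper bound you display comes from activity of $k$ at $x_k$, so $x_j$ should read $x_k$. Since you bound both orderings by swapping roles anyway, $|g_j-g_k|\le\frac52R^2\le 3R^2$ still follows.
\end{enumerate}
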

\begin{proof}
    For any $g \in \R^m$ and $\varphi_g \in \Gamma_0(\R^d)$, we have for all $j = 1,\ldots, m$
    \begin{align}
        \label{eq:semiDiscreteDualPotential}
        \varphi_g^*(y_j) &= \sup_{x \in \R^d} \left\langle y_j,x \right\rangle - \varphi_g(x) \nonumber\\
        &= \sup_{x \in \R^d} \left\langle y_j,x \right\rangle - \max_{k=1,\ldots,m} \left\{\left\langle x, y_k\right\rangle  + g_k-\frac{1}{2}\|y_k\|^2\right\}\nonumber\\
        &=\frac{1}{2}\|y_j\|^2 - g_j + \sup_{x \in \R^d} \left\langle y_j,x \right\rangle   + g_j-\frac{1}{2}\|y_j\|^2 - \max_{k=1,\ldots,m} \left\{\left\langle x, y_k\right\rangle  + g_k-\frac{1}{2}\|y_k\|^2\right\}\nonumber\\
        &\leq \frac{1}{2}\|y_j\|^2 - g_j.
    \end{align}
    Furthermore, if there is $x$ such that $\varphi_g(x) = \left\langle x, y_j\right\rangle + g_j - \frac{1}{2} \|y_j\|^2$, then \eqref{eq:semiDiscreteDualPotential} is actually an equality.

    Let us show that \eqref{eq:semiDiscreteSemiDual1} $=$ \eqref{eq:semiDiscreteSemiDual2}. \Cref{th:classical_duality_theorem} ensures that the infimum in \eqref{eq:semiDiscreteSemiDual1} is attained. Furthermore, we obviously have \eqref{eq:semiDiscreteSemiDual2} $ \geq$ \eqref{eq:semiDiscreteSemiDual1}.
    Consider an optimal $\varphi \in \Gamma_0(\R^d)$ for \eqref{eq:semiDiscreteSemiDual1}. It must hold that $\max_j \varphi^*(y_j) < + \infty$. For all $j$, set $g_j = \frac{1}{2} \|y_j\|^2 - \varphi^*(y_j)$ we have 
    \begin{align*}
        \varphi(x) &= \sup_y \left\langle x, y \right\rangle - \varphi^*(y) \geq \max_{k = 1,\ldots, m} \left\langle x, y_k \right\rangle - \varphi^*(y_k)  =\varphi_g(x) & \forall x \in \R^d, \nonumber\\
        \varphi^*(y_j) &= \frac{1}{2}\|y_j\|^2 - g_j \geq \varphi_g^*(y_j)& \forall j = 1,\ldots,m, 
    \end{align*}
    where we used \eqref{eq:semiDiscreteDualPotential}.
    We deduce that \eqref{eq:semiDiscreteSemiDual1} $\geq$ \eqref{eq:semiDiscreteSemiDual2}, so that both value are equal, and the infimum in \eqref{eq:semiDiscreteSemiDual2} is also attained.
    
    Let us now show that \eqref{eq:semiDiscreteSemiDual2} $=$ \eqref{eq:semiDiscreteSemiDual3}.
    First, \eqref{eq:semiDiscreteDualPotential} ensures that $\eqref{eq:semiDiscreteSemiDual2} \leq \eqref{eq:semiDiscreteSemiDual3}$. The proof of equality follows by the following claim, for example applied to an optimal $\bar{g}$ in \eqref{eq:semiDiscreteSemiDual2}.

    \begin{claim} 
        For any $\bar{g} \in \R^m$, there exists $g \geq \bar{g}$ coordinatewise such that $\varphi_g = \varphi_{\bar{g}}$ and \eqref{eq:semiDiscreteDualPotential} is an equality for $\varphi_g$ for all $j = 1,\ldots,m$. Furthermore, $g_j > \bar{g}_j$ for all $j$ such that \eqref{eq:semiDiscreteDualPotential} is not an equality for $\varphi_{\bar{g}}$.
    \end{claim}

    Let us now prove the claim. Set $\bar{J} = \{j = 1,\ldots,m|\, \exists x \in \R^d, \varphi_{\bar{g}}(x) = \left\langle x, y_j\right\rangle + \bar{g}_j - \frac{1}{2} \|y_j\|^2\}$. For all $j \in \bar{J}$, \eqref{eq:semiDiscreteDualPotential} is an equality and we may set $x_j$ such that $\varphi_{\bar{g}}(x_j) = \left\langle x_j, y_j\right\rangle + \bar{g}_j - \frac{1}{2} \|y_j\|^2$. Fix any $q \not \in \bar{J}$, it holds that \eqref{eq:semiDiscreteDualPotential} is strict, and actually
    \begin{align*}
        \min_x\, \varphi_{\bar{g}}(x)- \left\langle x, y_q\right\rangle - \bar{g}_q + \frac{1}{2} \|y_q\|^2 > 0,
    \end{align*}
    since the infimum of a polyhedral convex function is attained (see \cite[Chapter 19]{rockafellar1970convex}, the infimum is expressed as a bounded linear program whose value is always attained). We may denote the argmin by $\bar{x}$ and set 
    \begin{align*}
        g_q = \min_x\, \varphi_{\bar{g}}(x)- \left\langle x, y_q\right\rangle  + \frac{1}{2} \|y_q\|^2 = \varphi_{\bar{g}}(\bar{x})- \left\langle \bar{x}, y_q\right\rangle  + \frac{1}{2} \|y_q\|^2 > \bar{g}_q,
    \end{align*}
    and $g_k \colon = \bar{g}_k$ for $k \neq q$. We have $\varphi_{\bar{g}} \leq \varphi_g$ by construction, since $g \geq \bar{g}$ coordinatewise.
    
    For any $x$ such that $\varphi_g(x) > \langle x,y_q \rangle+g_q-\frac{1}{2}\|y_q\|^2$, we have 
    \begin{align*}
        \varphi_g(x) &= \max_{k \neq q}\langle x,y_k \rangle+g_k-\frac{1}{2}\|y_k\|^2 
        = \max_{k \neq q}\langle x,y_k \rangle+\bar{g}_k-\frac{1}{2}\|y_k\|^2 
        \leq \max_{k}\langle x,y_k \rangle+\bar{g}_k-\frac{1}{2}\|y_k\|^2 = \varphi_{\bar{g}}(x).
    \end{align*}
    For any $x$ such that $\varphi_g(x) = \langle x,y_q \rangle+g_q-\frac{1}{2}\|y_q\|^2$, we have
    \begin{align*}
        \varphi_g(x) &=\langle x,y_q \rangle+\min_z\, \varphi_{\bar{g}}(z)- \left\langle z, y_q\right\rangle  + \frac{1}{2} \|y_q\|^2 -\frac{1}{2}\|y_q\|^2
        \leq \varphi_{\bar{g}}(x).
    \end{align*}
    Thus $\varphi_g = \varphi_{\bar{g}}$. 
    
    Finally, we have 
    \begin{align*}
        \varphi_g(\bar{x})&= \max_{k=1,\ldots,m}\langle \bar{x},y_k \rangle + g_k-\frac{1}{2}\|y_k\|^2 \nonumber \geq \langle \bar{x},y_q \rangle + g_q-\frac{1}{2}\|y_q\|^2\nonumber\\
        &= \langle \bar{x},y_q \rangle+ \varphi_{\bar{g}}(\bar{x})- \left\langle \bar{x}, y_q\right\rangle  + \frac{1}{2} \|y_q\|^2-\frac{1}{2}\|y_q\|^2\nonumber 
        = \varphi_{\bar{g}}(\bar{x}) = \varphi_g(\bar{x}),
    \end{align*}
    and thus
    \begin{align}
        \label{eq:unionActiveIndex1}
        \varphi_g(\bar{x}) =  \langle \bar{x},y_q \rangle + g_q-\frac{1}{2}\|y_q\|^2.
    \end{align}
    In addition for all $j \in \bar{J}$
    \begin{align}
        \label{eq:unionActiveIndex2}
        \varphi_g(x_j) = \varphi_{\bar{g}}(x_j) =   \left\langle x_j, y_j\right\rangle + \bar{g}_j - \frac{1}{2} \|y_j\|^2 = \left\langle x_j, y_j\right\rangle + g_j - \frac{1}{2} \|y_j\|^2.
    \end{align}
    We deduce by combining \eqref{eq:unionActiveIndex1} and \eqref{eq:unionActiveIndex2} that  \eqref{eq:semiDiscreteDualPotential} holds with equality for $\varphi_g$, for all $j \in \bar{J} \cup \{q\}$. We may repeat this process recursively for each $q \not \in \bar{J}$ and obtain $\varphi_g = \varphi_{\bar{g}}$ such that \eqref{eq:semiDiscreteDualPotential} is an equality for all $j$. This shows \eqref{eq:semiDiscreteSemiDual2} $=$ \eqref{eq:semiDiscreteSemiDual3} and concludes the proof. 

   We now prove the last statement of the Lemma. Let $g\in \R^m$ be optimal in \eqref{eq:semiDiscreteSemiDual3} such that \eqref{eq:semiDiscreteDualPotential} is an equality for all $j$. The addition of a constant to all the coordinates of $g$ does not change the functional value in \eqref{eq:semiDiscreteSemiDual3}, we may therefore assume that $g_m=0$.  By \Cref{lem:brenier_potentials} there exists a function in $\mathcal{C}_{R,R}$ equal to $\varphi_g$ over $\supp \mu$ with conjugate equal to $\varphi_g^*$ over $\supp \nu$.
    For any $i\in \{1,\ldots,m\}$, using that $\varphi_g^*(y_i)=\frac{1}{2}\|y_i\|^2-g_i$ and the fact that $y_i,y_m\in \supp \nu$ we obtain
    \begin{align*}
        |g_i|&=|g_i-g_m|
        =\left|\frac{1}{2}\|y_i\|^2-\frac{1}{2}\|y_m\|^2-\varphi_g^*(y_i)+\varphi_g^*(y_m)\right| 
        \leq R^2+R\|y_i-y_m\|\leq 3R^2.
    \end{align*}
\end{proof}

The finite-dimensional parametrization in \Cref{lem:semidiscreteSemidual} allows us to adapt \Cref{th:main_stat_result}, yielding an improved statistical rate, with dimension free-exponent.
\begin{theorem}
	\label{th:mainResultSemiDiscrete}
	For all $M>0$, $R>0$, and $d\in \mathbb{N}^*$ there exists $C>0$ such that for all $\mu$, $\nu$ as in \Cref{assumption:semi_discrete} the following holds. Let $\bar{\varphi}$ be an optimal Brenier potential in \eqref{DP} for measures $\mu$ and $\nu$. Then, for $\nu$ fixed,
    \begin{equation*}
        \mathbb{E}\left[ \int_{\R^d}\|\nabla \bar{\varphi}(x)-\hat{T}_n(x)\|d\mu(x)\right]\leq C\left( \frac{m}{n}\right)^{\frac{1}{8}}.
    \end{equation*}
\end{theorem}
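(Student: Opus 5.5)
We aim to reuse the error bound behind \Cref{th:simplerForm2}, but replace the Wasserstein control of the right-hand side by a finite-dimensional empirical process bound. Write $\hat g$ for an optimal dual vector of the discrete problem between $\hat\mu_n$ and $\nu$, normalized as in \Cref{lem:semidiscreteSemidual} so that $\hat g\in[-3R^2,3R^2]^m$ with $\hat g_m=0$ and $\varphi_{\hat g}^*(y_j)=\frac12\|y_j\|^2-\hat g_j$. This is legitimate because \Cref{lem:semidiscreteSemidual} applies to any probability measure on $B_R$, including $\hat\mu_n$. Changing $\hat g$ this way keeps $\varphi_{\hat g}$ unchanged by the Claim, so $\hat T_n$ is still a selection of $\partial\varphi_{\hat g}$. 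The error bound \eqref{eq:prelimErrorBound} (the version proved in \Cref{sec:errorBound} underlying \Cref{cor:simplerForm2}) applied with $\phi=\varphi_{\hat g}$, which is $R$-Lipschitz, gives
\begin{align*}
c\,\|\nabla\varphi_{\hat g}-\nabla\bar\varphi\|_{L^1(\mu)}^4\le \Delta:=\int\varphi_{\hat g}\,d\mu+\int\varphi_{\hat g}^*\,d\nu-\int\bar\varphi\,d\mu-\int\bar\varphi^*\,d\nu .
\end{align*}
Here $\nu$ is the population target itself, and $\nabla\varphi_{\hat g}=\hat T_n$ Lebesgue-a.e., hence $\mu$-a.e.

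Next we control $\Delta$ by the standard excess-risk decomposition. Set $F(g)=\int\varphi_g\,d\mu+\sum_j b_j(\frac12\|y_j\|^2-g_j)$ and $\hat F_n$ the same with $\hat\mu_n$. By \Cref{lem:semidiscreteSemidual}, $\int\varphi_{\hat g}^*d\nu=\sum_j b_j(\frac12\|y_j\|^2-\hat g_j)$, so $\Delta=F(\hat g)-\min F$. Let $\bar g$ be a minimizer of $F$ in the box $G=[-3R^2,3R^2]^m$. Since $\hat F_n(\hat g)\le\hat F_n(\bar g)$,
\begin{align*}
\Delta\le F(\hat g)-\hat F_n(\hat g)+\hat F_n(\bar g)-F(\bar g)\le 2\sup_{g\in G}\Big|\int\varphi_g\,d(\mu-\hat\mu_n)\Big|.
\end{align*}
The linear terms in $g$ cancel because $\nu$ is not resampled.

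It remains to bound $\mathbb E\sup_{g\in G}|(\hat\mu_n-\mu)\varphi_g|$ by $C\sqrt{m/n}$, and this is the main step. The class $\{\varphi_g:g\in G\}$ is uniformly bounded on $B_R$ by a constant depending only on $R$, since $|\varphi_g|\le R^2+3R^2+R^2/2$. It is also $1$-Lipschitz in $g$ for the sup norm, because $|\varphi_g(x)-\varphi_{g'}(x)|\le\|g-g'\|_\infty$. Hence its $\varepsilon$-covering number in uniform norm is at most $(6R^2/\varepsilon)^m$. I would first try Dudley's entropy integral after symmetrization, which gives $\mathbb E\sup\lesssim R^2\sqrt{m/n}\int_0^1\sqrt{\log(1/\varepsilon)}\,d\varepsilon\lesssim \sqrt{m/n}$ with constants depending only on $R$. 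As a cruder fallback, a union bound over an $\varepsilon$-net with Hoeffding's inequality gives $\sqrt{m\log n/n}$. The delicate part is getting a dimension-free dependence on $d$: the parametrization by $g\in\R^m$ is what makes the entropy depend on $m$ and not on $d$, and the constraint $g\in G$ is what justifies restricting to a compact parameter set.

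Finally, by Jensen's inequality as in the proof of \Cref{th:main_stat_result},
\begin{align*}
\Big(\mathbb E\|\hat T_n-\nabla\bar\varphi\|_{L^1(\mu)}\Big)^4\le c^{-1}\,\mathbb E\Delta\le C\sqrt{m/n},
\end{align*}
which yields the rate $(m/n)^{1/8}$.
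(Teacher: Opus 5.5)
Your proposal is correct and is essentially the paper's proof. The paper likewise normalizes $\hat g$ into $[-3R^2,3R^2]^m$ via \Cref{lem:semidiscreteSemidual}, bounds the dual gap by $2\sup_{g}\bigl|\int\varphi_g\,d(\mu-\hat\mu_n)\bigr|$ through the excess-risk decomposition (the linear terms cancel since $\nu$ is fixed), applies \Cref{cor:simplerForm} with Jensen's inequality, and controls the supremum by a Dudley entropy integral with covering numbers of order $(R^2/\varepsilon)^m$. One small point, which the paper also glosses over: $\varphi_{\hat g}^*=+\infty$ off $\conv\{y_1,\ldots,y_m\}$, so $\varphi_{\hat g}\notin\mathcal{C}_{2R,2R}$ literally, but getting the exponent $1/4$ from \Cref{th:dualErrorBound} only uses the Lipschitz property of $\varphi_{\hat g}$ on $B_{2R}$ and an $R$-dependent bound on $\Delta$, which your decomposition supplies.
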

\begin{proof}
    Invoking \Cref{lem:semidiscreteSemidual}, we have $\bar{g}, \hat{g}$ optimal for \eqref{eq:semiDiscreteSemiDual3}, with infinite norm at most $3R^2$, associated respectively to the semi-discrete problem for $(\mu,\nu)$ and the discrete problem for $(\hat{\mu}_n,\nu)$. The resulting potentials, $\varphi_{\bar{g}}$ and $\varphi_{\hat{g}}$ are optimal for \eqref{eq:semiDiscreteSemiDual2} and are therefore optimal dual potentials in \Cref{th:classical_duality_theorem}. Finally we may consider that \eqref{eq:semiDiscreteDualPotential} holds as equality for all $i = 1, \ldots, m$ for both $\varphi_{\bar{g}}$ and $\varphi_{\hat{g}}$. We have

    \begin{align*}      &\int_{\R^d}\varphi_{\hat{g}}d\mu+\int_{\R^d}\varphi_{\hat{g}}^*d\nu-\left(\int_{\R^d} \bar{\varphi}d\mu +\int_{\R^d}\bar{\varphi}^*d\nu\right)\\
        =\;&\int_{\R^d} \varphi_{\hat{g}} d\mu +\sum_{i=1}^m b_i\left( \frac{1}{2}\|y_i\|^2-\hat{g}_i  \right)-\left(\int_{\R^d} \varphi_{\bar{g}} d\mu + \sum_{i=1}^m b_i\left( \frac{1}{2}\|y_i\|^2-\bar{g}_i  \right)\right)\\
        \leq\;& \int_{\R^d} \varphi_{\hat{g}} d\mu +\sum_{i=1}^m b_i\left( \frac{1}{2}\|y_i\|^2-\hat{g}_i  \right)-\left(\int_{\R^d} \varphi_{\hat{g}} d\hat{\mu}_n +\sum_{i=1}^m b_i\left( \frac{1}{2}\|y_i\|^2-\hat{g}_i  \right)\right)\\
        &+\int_{\R^d} \varphi_{\bar{g}} d\hat{\mu}_n +\sum_{i=1}^m b_i\left( \frac{1}{2}\|y_i\|^2-\bar{g}_i  \right)-\left(\int_{\R^d} \varphi_{\bar{g}} d\mu +\sum_{i=1}^m b_i\left( \frac{1}{2}\|y_i\|^2-\bar{g}_i\right)  \right)\\
        \leq\;& 2\sup_{g \in [-3R^2,3R^2]^m }\left| \int_{\R^d} \varphi_{g}d\mu -\int_{\R^d}\varphi_g d\hat{\mu}_n \right|
    \end{align*}
    where we used for the first inequality the fact that $\hat{g}$ is a minimizer in \eqref{eq:semiDiscreteSemiDual3} for measure $\hat{\mu}_n$.   We apply  \Cref{cor:simplerForm} to the potentials $\bar{\varphi}$ and $\varphi_{\hat{g}}$ and Jensen's inequality. Recalling that $\hat{T}_n=\nabla \varphi_{\hat{g}}$ almost everywhere, it yields
    \begin{equation}
        \frac{1}{C^4}\mathbb{E}\left[\int \|\nabla \bar{\varphi}(x)-\hat{T}_n(x) \|d\mu(x) \right] ^4\leq\mathbb{E}\left[2\sup_{g\in [-3R^2,3R^2]^m }\left| \int_{\R^d} \varphi_{g}d\mu -\int_{\R^d}\varphi_g d\hat{\mu}_n\right| \right].
        \label{eq:proof_semi-discrete_stat_thm}
    \end{equation}
    To upper bound the expectation of the last term we are going to use \cite[Theorem 3.5.1]{gine2021mathematical} with the following class of functions
    \begin{equation*}
        \mathcal{F}=\{x\mapsto\varphi_g(x),\:g\in \left[-3R^2,3R^2\right]^m\}.
    \end{equation*}
    Notice that for all $x\in\R^d$ and for any $w,v\in \R^m$ we have $|\varphi_w(x)-\varphi_v(x)|\leq \|w-v\|_\infty$ and thus $\|\varphi_w-\varphi_v\|_{L^2(\mu_n)}  \leq \|w-v\|_\infty$. 
    
    We thus verify the following chain of inequalities for covering numbers
    \begin{align*}
            \mathcal{N}(\tau/2,\mathcal{F},\Vert\cdot\Vert_{L^2(\mu_n)}) \leq  \mathcal{N}(\tau/2,[-3R^2,3R^2]^m,\Vert\cdot\Vert_{\infty}) 
            \leq \mathcal{N}(\tau/2,\left[-3R^2,3R^2\right],\vert\cdot\vert)^m \leq\left(\frac{12R^2}{\tau} \right)^m.
    \end{align*}
    Furthermore, we have for all $x\in B_R$ and for all $g\in [-3R^2,3R^2]^m$ that $|\varphi_g(x)|\leq \frac{9}{2}R^2$ and thus $\sup_{g\in[-3R^2,3R^2]^m}\|\varphi_g\|_{L^2(\mu)}\leq \frac{9}{2}R^2$. Combining \eqref{eq:proof_semi-discrete_stat_thm} with \cite[Theorem 3.5.1]{gine2021mathematical} we then obtain the desired result
    \begin{equation*}
    \frac{1}{C^4}\mathbb{E}\left[\int \|\nabla \bar{\varphi}(x)-\hat{T}_n(x) \|d\mu(x) \right] ^4\leq 16\sqrt{2}\sqrt{\frac{m}{n}}\int_{0}^{\frac{9}{2}R^2}\sqrt{\log\left(\frac{24R^2}{\tau}\right)}d\tau.
    \end{equation*}
\end{proof}

\begin{remark}
    The previous theorem can be extended to the case where $\nu$ is also approximated by samples, yielding the same convergence rate. The proof can be derived from a conditioning argument, as in \cite[Appendix F]{pooladian2023minimax}.
\end{remark}
\begin{remark}
    \Cref{rem:poincare}, which discusses the case where $\mu$ satisfies an $L^1$ Poincaré inequality, also applies to the previous theorem.
\end{remark}

\paragraph{Comparison with existing semi-discrete Monge map estimators} 
A semi-discrete Monge map estimator based on the entropic optimal transport map between the empirical source and target measures is studied in \cite{pooladian2023minimax}. Similarly to \Cref{sec:mainStatisticalResult}, their analysis assumes that the source population measure admits a density that is bounded both from above and below on its convex compact support. Under this assumption, they establish convergence rate in  $L_2(\mu)$ of order $O\left(\left(m/n\right)^{\frac{1}{4}}\right)$   which they show to be minimax optimal in $n$.

By contrast, our analysis results in a convergence rate of order $O\left(\left(m/n\right)^{\frac{1}{8}}\right)$ in $L^1(\mu)$ without requiring convexity of the support nor that the density of the source measure is lower bounded away from zero.
We do not know whether this rate is minimax optimal under these general assumptions. Moreover, it remains unclear whether our convergence rate analysis could be improved under the stronger regularity assumptions considered in \cite{pooladian2023minimax}. Addressing both questions is left for future work. Finally, our estimator compares favorably to the baseline, one nearest neighbor estimator, which suffers from the curse of dimensionality in the semi-discrete setting, as described in \cite{pooladian2023minimax}.

\subsection{Convergence of couplings}
\label{sec:couplings}
Existing works on the convergence of empirical optimal couplings are asymptotic \cite{seguy2018large, segers2022graphical}, and we are not aware of finite sample bounds. The following result provides such a bound, based on a stability estimate for optimal couplings presented in \Cref{sec:stabilityCouplings}. 
\begin{theorem}
    For any $M>0,\:R>0$ and $d\in \mathbb{N}^*$
     under the setting of \Cref{Assumption_1},  there exists $C>0$ such that the following holds. Let $\hat{\gamma}_{n,m}$ be an optimal coupling for measures $\hat{\mu}_n, \hat{\nu}_m$, and $\gamma$ be an optimal coupling for measures $\mu$, $\nu$, then 
         \begin{equation*}
        \mathbb{E}\left[W_2(\gamma,\hat{\gamma}_{n,m})\right]\leq C \begin{cases}
        \left( n^{-1/4}+m^{-1/4}\right)^{1/8} & \text{if}\: d<4 ,\\
        \left(n^{-1/4}\sqrt{\log(1+n)}+m^{-1/4}\sqrt{\log(1+m)}\right)^{1/8} & \text{if}\: d=4,\\
        \left( n^{-1/d} +m^{-1/d}\right)^{1/8} & \text{if } d > 4.
        \end{cases}
    \end{equation*}
    \label{th:stat_result_couplings}
\end{theorem}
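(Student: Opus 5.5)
The plan is to reduce the statement to a deterministic stability estimate for optimal couplings, and then take expectations using known rates for empirical measures in $W_2$. The exponents in the statement suggest a specific bound. The rates $n^{-1/4}$ for $d<4$, $n^{-1/4}\sqrt{\log(1+n)}$ for $d=4$ and $n^{-1/d}$ for $d>4$ are exactly the square roots of the \cite{fournier2015rate} rates for $\mathbb{E}[W_2^2(\mu,\hat\mu_n)]$ with $p=2$. The outer exponent $1/8$ is half the exponent $1/4$ of \Cref{th:simplerForm2}. I would therefore aim for the deterministic bound
\begin{equation*}
W_2(\gamma,\hat\gamma)\leq C\left(W_2(\mu,\hat\mu)+W_2(\nu,\hat\nu)\right)^{1/8},
\end{equation*}
valid for all $\hat\mu,\hat\nu$ supported in $B_R$ and any optimal couplings $\gamma$ and $\hat\gamma$. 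This is presumably the content of \Cref{sec:stabilityCouplings}.

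\textbf{Step 1 (deterministic bound).} By \Cref{th:classical_duality_theorem}, $\gamma=(Id,\nabla\bar\varphi)_\sharp\mu$. By \Cref{lem:explicitBrenierPotential}, $\hat\gamma$ is supported on $\graph\partial\varphi_g$, where $\varphi_g$ is globally $R$-Lipschitz. Fix a selection $\hat T$ of $\partial\varphi_g$ and split with the triangle inequality through the intermediate coupling $\tilde\gamma=(Id,\hat T)_\sharp\mu$.

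For the first term I use the coupling $(x,\nabla\bar\varphi(x))\leftrightarrow(x,\hat T(x))$. Since both maps take values in $\bar B_R$,
\begin{equation*}
W_2(\gamma,\tilde\gamma)^2\leq\int\|\nabla\bar\varphi-\hat T\|^2d\mu\leq 2R\int\|\nabla\bar\varphi-\hat T\|d\mu .
\end{equation*}
\Cref{th:simplerForm2}, together with $W_1\leq W_2$, then gives $W_2(\gamma,\tilde\gamma)\lesssim (W_2(\mu,\hat\mu)+W_2(\nu,\hat\nu))^{1/8}$.

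\textbf{Step 2 (the second term, main obstacle).} It remains to bound $W_2(\tilde\gamma,\hat\gamma)$. These two measures share the graph structure of $\partial\varphi_g$ but have different first marginals, $\mu$ and $\hat\mu$. First I take an optimal plan $\sigma\in\Pi(\mu,\hat\mu)$. Then I glue $\sigma$ with the disintegration of $\hat\gamma$ along $x'$, which produces triples $(x,x',y')$ with $y'\in\partial\varphi_g(x')$. This gives
\begin{equation*}
W_2(\tilde\gamma,\hat\gamma)^2\leq\int\|x-x'\|^2d\sigma+\int\|\hat T(x)-y'\|^2 .
\end{equation*}
The difficulty is the second integral, because $\hat T$ is discontinuous. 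I would bound it by inserting $\nabla\bar\varphi$: write $\|\hat T(x)-y'\|\leq\|\hat T(x)-\nabla\bar\varphi(x)\|+\|\nabla\bar\varphi(x)-y'\|$. The first piece is already controlled in $L^1(\mu)$ by Step 1, using boundedness by $2R$ to pass from $L^2$ to $L^1$.

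For the second piece, $\|\nabla\bar\varphi(x)-y'\|$, I would use that $\gamma$ is the unique optimal coupling. Its value can be compared with the cost of the glued plan, and Carlier's Fenchel--Young remainder, which is the ingredient behind \Cref{th:simplerForm2}, should again convert a small duality gap into $L^1$ closeness. If this route fails, a fallback is to apply the error bound directly to the coupling $\hat\gamma$, by estimating $\int(\bar\varphi(x')+\bar\varphi^*(y')-\langle x',y'\rangle)\,d\hat\gamma$ through the dual gap and the Lipschitz constants. Either way I expect exponent $1/8$ for $W_2$, possibly after degrading $W_2$ of the marginals to $W_1$ or keeping it as $W_2$. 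I regard this step as the main obstacle.

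\textbf{Step 3 (expectation).} With the deterministic bound in hand, apply Jensen's inequality twice, using $t\mapsto t^{1/8}$ and then $\mathbb{E}[W_2]\leq\mathbb{E}[W_2^2]^{1/2}$:
\begin{equation*}
\mathbb{E}[W_2(\gamma,\hat\gamma_{n,m})]\leq C\left(\mathbb{E}[W_2^2(\mu,\hat\mu_n)]^{1/2}+\mathbb{E}[W_2^2(\nu,\hat\nu_m)]^{1/2}\right)^{1/8}.
\end{equation*}
The Main Theorem of \cite{fournier2015rate} with $p=2$ and some $q>4$ gives $\mathbb{E}[W_2^2]$ of order $n^{-1/2}$, $n^{-1/2}\log(1+n)$ or $n^{-2/d}$ according to whether $d<4$, $d=4$ or $d>4$. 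The remaining $n^{-1+2/q}$ term is dominated and is absorbed into the constant. Taking square roots yields the three cases of the statement.
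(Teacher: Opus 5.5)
Your overall architecture matches the paper's. The deterministic bound you aim for is exactly \Cref{cor:stabilityCoupling}, your Step 3 (Jensen, $\mathbb{E}[W_2]\le\mathbb{E}[W_2^2]^{1/2}$, and \cite{fournier2015rate} with $p=2$) is the paper's proof of the statement, and your Step 1 is correct.

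The gap is Step 2, and it is a real one. Once you insert $\nabla\bar\varphi$, the piece $\int\|\nabla\bar\varphi(x)-y'\|^2$ over your glued plan is, up to $\int\|x-x'\|^2d\sigma$, exactly what bounds $W_2^2(\gamma,\hat\gamma)$ through the same gluing of $\gamma$ with $\hat\gamma$. The detour through $\tilde\gamma$ has therefore reduced the problem to itself. Neither remedy you suggest closes it:
\begin{itemize}
\item The $(x,y')$-marginal of the glued plan couples $\mu$ with $\hat\nu$, so it is not a competitor for $\gamma$. Uniqueness of $\gamma$ also carries no quantitative information.
\item Carlier's remainder (your fallback, essentially \Cref{lem:perturbCoulingSupport}) only yields some $v\in\partial\bar\varphi(z)$ at a \emph{different}, nearby base point $z$. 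Since $\nabla\bar\varphi$ may be discontinuous, nothing you have written makes $v$ close to $\nabla\bar\varphi(x)$.
\end{itemize}

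The missing ingredient is a quantitative control of how much a subdifferential oscillates, as seen by a measure of bounded density. By \Cref{th:coveringNumber}, for \emph{any} convex $\phi$ that is $L$-Lipschitz on $B_{2R}$, one has $\mu(\{x\in B_R:\ \mathrm{diam}\,\partial\phi(B(x,\eta))\geq\alpha\})\lesssim ML\eta/\alpha$, with constants depending on $d,R$. Combined with Markov's inequality on the displacement $\|x-S(x)\|$ of the optimal map $S$ between first marginals, this gives \Cref{lem:extensionStability}. That lemma says two couplings carried by the same graph $\graph\partial\phi$, one of the form $(Id,\nabla\phi)_\sharp\mu$, are $W_2$-close to order $W_2(\mu,\beta)^{1/4}$. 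The paper applies it after moving $\hat\gamma$ onto $\graph\partial\bar\varphi$ via \Cref{lem:perturbCoulingSupport}.

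Your decomposition can in fact be repaired without inserting $\nabla\bar\varphi$ at all. Apply \Cref{lem:extensionStability} to $\phi=\varphi_g$ (globally $R$-Lipschitz), with $\gamma:=\tilde\gamma=(Id,\nabla\varphi_g)_\sharp\mu$, $\theta:=\hat\gamma$ and $\beta:=\hat\mu$. On $\{\|x-S(x)\|<\eta\}$ you have $\|\hat T(x)-y'\|\leq\mathrm{diam}\,\partial\varphi_g(B(x,\eta))$. Since the covering bound is uniform in $g$, you get $W_2^2(\tilde\gamma,\hat\gamma)\lesssim(1+R^2/\eta^2)W_2^2(\mu,\hat\mu)+R^3M\eta/\alpha+\alpha^2$. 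Optimizing $\alpha$ and $\eta$ gives $W_2(\tilde\gamma,\hat\gamma)\lesssim W_2(\mu,\hat\mu)^{1/4}$. Together with your Step 1 and boundedness of all distances, this yields the $1/8$ bound. That would be a valid alternative to the paper's route: map stability plus stability on the empirical graph, rather than prox-perturbation plus stability on the population graph.
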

\begin{proof}
    We apply \Cref{cor:stabilityCoupling} to coupling $\gamma$ and $\hat{\gamma}_{n,m}$. There exists $C>0$ depending on $d$, $R$ and $M$ such that $W_2(\gamma,\hat{\gamma}_{n,m})^8\leq C(W_2(\mu,\hat{\mu}_n)+W_2(\nu,\hat{\nu}_m))$. As in \Cref{th:main_stat_result}, we apply \cite[Main Theorem]{fournier2015rate} for $W_2$, with $q=4$, yielding the result. Note that an additional $\frac{1}{2}$ factor arises as a byproduct of considering $W_2(\mu,\hat{\mu}_n)$ (resp. $W_2(\nu,\hat{\nu}_m)$) instead of $W_2^2(\mu,\hat{\mu}_n)$ (resp. $W_2^2(\nu,\hat{\nu}_m)$) in \cite{fournier2015rate}.
\end{proof}
\begin{remark}
    This estimate is unlikely to admit significant improvement in the semi-discrete setting. Indeed, as suggested by the stability result for couplings on which \Cref{th:stat_result_couplings} builds, the convergence of the couplings cannot be substantially faster than that of the marginals. Since $W_2(\mu,\hat{\mu}_n)$ is subject to the curse of dimensionality, this is also the case for $W_2(\gamma,\hat{\gamma}_{n,m})$.    
\end{remark}
\section{An error bound for Brenier dual potentials}
\label{sec:errorBound}

\subsection{Preliminary results}
\label{sec:preliminaryErrorBound}
We recall the following \cite[Theorem 2.1]{carlier2025quantitative} on the covering number of singularities of convex functions.

\begin{theorem}
	Let $\phi \colon \RR^d \to \RR$ be convex and $L,R > 0$, such that $\phi $ is $L$-Lipschitz on $B(0,R)$. Set $\Sigma_{\eta,\alpha} = \left\{ x \in \RR^d| \mathrm{diam} (\partial \phi(B(x,\eta))) \geq \alpha \right\}$.
	Then for all $\alpha$ and $\eta>0$,  we have the following bound on the covering number
	\begin{align*}
		\mathcal{N}( \Sigma_{\eta,\alpha} \cap B(0,R), 8\eta) \leq 48d^2 (R+4\eta)^{d-1} \frac{L}{\alpha \eta^{d-1}}.
	\end{align*}
	\label{th:coveringNumber}
\end{theorem}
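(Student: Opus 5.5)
The plan is a packing-and-mass argument: a ball around a singular point carries second-derivative mass proportional to $\alpha\eta^{d-1}$, and the total available mass is controlled by the Lipschitz constant through the divergence theorem. Let $x_1,\dots,x_N$ be a maximal family of points of $\Sigma_{\eta,\alpha}\cap B(0,R)$ with $\|x_i-x_j\|\geq 8\eta$ for $i\neq j$. By maximality the balls $B(x_i,8\eta)$ cover $\Sigma_{\eta,\alpha}\cap B(0,R)$, so $\mathcal{N}(\Sigma_{\eta,\alpha}\cap B(0,R),8\eta)\leq N$. The balls $B(x_i,4\eta)$ are pairwise disjoint and contained in $B(0,R+4\eta)$. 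It therefore suffices to find a nonnegative measure $\Lambda$ such that
\begin{align*}
\Lambda(B(x_i,4\eta))\geq c_d\,\alpha\,\eta^{d-1}\ \text{ for each } i, \qquad \Lambda(B(0,R+4\eta))\leq C_d\,L\,(R+4\eta)^{d-1}.
\end{align*}
Dividing then gives $N\leq (C_d/c_d)\,L(R+4\eta)^{d-1}/(\alpha\eta^{d-1})$, and the constant $48d^2$ should come from tracking $c_d$ and $C_d$.

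\textbf{Choice of $\Lambda$ and the upper bound.} I would take $\Lambda=\Delta\phi$, the distributional Laplacian of $\phi$. Since $\phi$ is convex, $D^2\phi$ is a matrix-valued Radon measure with nonnegative values, so $\Delta\phi=\mathrm{tr}\,D^2\phi$ is a nonnegative measure. Moreover, for every unit vector $e$, the measure $\partial_{ee}\phi$ satisfies $0\leq\partial_{ee}\phi\leq\Delta\phi$.

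For the global bound, I would first mollify $\phi$, apply the divergence theorem on balls $B(0,r)$, and average over $r$ to avoid boundary atoms. This gives $\Delta\phi(B(0,r))=\int_{\partial B(0,r)}\partial_n\phi\,d\sigma\leq \sup|\nabla\phi|\cdot d\,\omega_d r^{d-1}$. Here is a technical point to handle: the hypothesis only gives $L$-Lipschitz continuity on $B(0,R)$, while the balls $B(x_i,4\eta)$, and the points witnessing $x_i\in\Sigma_{\eta,\alpha}$, may leave $B(0,R)$. I would first try to replace $\phi$ by its largest convex $L$-Lipschitz minorant that agrees with $\phi$ on $B(0,R)$, which is globally $L$-Lipschitz. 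I would then need to check that the relevant subgradient differences are not destroyed by this replacement, or adjust the radii to absorb the discrepancy into the factor $(R+4\eta)^{d-1}$.

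\textbf{Local lower bound (main obstacle).} Fix $i$. By definition of $\Sigma_{\eta,\alpha}$ there are $a,b\in B(x_i,\eta)$, $p\in\partial\phi(a)$ and $q\in\partial\phi(b)$ with $\|p-q\|\geq\alpha$. Set $e=(p-q)/\|p-q\|$, so that $\langle p,e\rangle-\langle q,e\rangle\geq\alpha$. The goal is to show
\begin{align*}
\partial_{ee}\phi(B(x_i,4\eta))\gtrsim \alpha\,\eta^{d-1}.
\end{align*}
My plan is to bound the directional slope at $a$ and $b$ by averages of $\partial_e\phi$ over small balls shifted along $e$. By convexity, for every $y$,
\begin{align*}
\langle p,y-a\rangle\leq \phi(y)-\phi(a), \qquad \langle q,y-b\rangle\leq\phi(y)-\phi(b).
\end{align*}
I would average the first inequality over $y$ in a ball of radius about $\eta$ centred at $x_i+2\eta e$, and the second over a ball centred at $x_i-2\eta e$. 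Combining these with the one-dimensional monotonicity of $t\mapsto\partial_e\phi(z+te)$ along lines, I expect
\begin{align*}
\langle p,e\rangle-\langle q,e\rangle\lesssim \eta^{-(d-1)}\int_{B(x_i,4\eta)^{\perp}}\bigl[\partial_e\phi\bigr]_{\text{line}}\,dz',
\end{align*}
where $B(x_i,4\eta)^{\perp}$ denotes the projection of the ball onto $e^\perp$, and the bracket is the increment of $\partial_e\phi$ along the chord of $B(x_i,4\eta)$ through $z'$ in direction $e$. By Fubini, this integral equals $\partial_{ee}\phi$ of a cylinder contained in $B(x_i,4\eta)$.

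The delicate part is that $a$ and $b$ need not lie on the same line parallel to $e$. The averaging over transversal $(d-1)$-dimensional discs of radius about $\eta$ is exactly what produces the factor $\eta^{d-1}$, and the margin between $\eta$ and $4\eta$ must absorb the transversal offset between $a$ and $b$. If this direct route becomes messy, I would fall back on a mollification at scale $\eta$, followed by a limiting argument.
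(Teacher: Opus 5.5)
\textbf{There is a genuine gap.} The paper does not prove this statement. It quotes \cite[Theorem 2.1]{carlier2025quantitative}, which assumes $\phi$ globally $L$-Lipschitz, and removes that assumption by extending $\phi|_{B(0,R)}$ to a globally $L$-Lipschitz convex function \cite{cobzas1978norm}. Your Lipschitz-minorant step is exactly this reduction. The check you postpone, that subgradient differences survive the replacement, fails. No adjustment of radii can fix it, because with Lipschitz continuity assumed only on $B(0,R)$ the statement is false (the paper's one-line justification overlooks the same point). Take $\phi(x)=\max(0,K(\|x\|-R))$. It vanishes on $B(0,R)$, so it is $L$-Lipschitz there for every $L>0$. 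Yet for $R-\eta<\|x\|<R$, the ball $B(x,\eta)$ contains $x$, where $\partial\phi(x)=\{0\}$, and also points outside $\bar B_R$, where $\|\nabla\phi\|=K$. Hence $\Sigma_{\eta,\alpha}\cap B(0,R)\neq\emptyset$ for $0<\alpha\le K$, so the covering number is at least $1$, while the right-hand side tends to $0$ as $L\to 0$. The witnesses $a,b$ live in $B(x,\eta)\not\subset B(0,R)$, where the hypothesis gives no information. The correct statement assumes $\phi$ is $L$-Lipschitz on $B(0,R+\eta)$, and that is all the paper ever uses: there $\phi$ is Lipschitz on $B_{2R}$ and $\eta\le R/4$. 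Under that hypothesis your minorant $\tilde\phi$ coincides with $\phi$ on the open ball $B(0,R+\eta)$. Convex functions that agree on an open set have the same subdifferential there, so $\Sigma_{\eta,\alpha}\cap B(0,R)$ is unchanged and your globally Lipschitz argument can be run on $\tilde\phi$.

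Second, your local lower bound, which carries the whole estimate, is not established, and the route you sketch does not give the $\alpha\eta^{d-1}$ scaling. Let $A_{+}$ and $A_{-}$ be the averages of $\phi$ over the balls centred at $x_i+2\eta e$ and $x_i-2\eta e$. Averaging your two supporting-hyperplane inequalities over these balls gives
\[
2\eta\,\langle p-q,e\rangle\le A_{+}+A_{-}-2\phi(x_i)+D_a+D_b,\qquad D_a=\phi(x_i)-\phi(a)-\langle p,x_i-a\rangle ,
\]
and similarly for $D_b$. The Bregman terms $D_a,D_b$ are a priori bounded only by $2L\eta$, and nothing in your sketch charges them to curvature. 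Likewise, monotonicity along $y-a=se+w$ with $w\perp e$ only yields $\langle\nabla\phi(y),e\rangle\ge\langle p,e\rangle-2L\|w\|/s$. With $s$ and $\|w\|$ of order $\eta$ these errors are $O(L)$, which the margin between $\eta$ and $4\eta$ cannot absorb when $\alpha\ll L$. Shrinking the transversal discs to radius $\alpha\eta/L$ only gives mass $\alpha^d\eta^{d-1}/L^{d-1}$, the wrong power of $L/\alpha$.

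The missing idea is to let the curvature mass itself, rather than $L$, control these errors; this requires the full $\Delta\phi$ rather than $\partial_{ee}\phi$. Since $|D^2\phi|\le\Delta\phi$ for convex $\phi$, the BV Poincar\'e inequality on $B=B(x_i,4\eta)$ gives $\frac{1}{|B|}\int_B\|\nabla\phi-c\|\,dx\le C_d\eta^{1-d}\Delta\phi(B)$, with $c$ the mean of $\nabla\phi$ on $B$. Averaging $0\le\langle\nabla\phi(y)-p,y-a\rangle$ over the cone $\{a+s\theta:\ \eta<s<2\eta,\ \angle(\theta,p-c)<\pi/4\}\subset B$ yields $\|p-c\|\le C_d'\eta^{1-d}\Delta\phi(B)$. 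The same holds for $q$, and $\alpha\le\|p-c\|+\|q-c\|$ then gives $\Delta\phi(B(x_i,4\eta))\ge c_d\alpha\eta^{d-1}$. Your packing and Gauss--Green steps are sound and then conclude the proof, though with a dimensional constant that need not equal $48d^2$.
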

Note that this theorem slightly differs from the formulation in \cite[Theorem 2.1]{carlier2025quantitative} which requires a global Lipschitz modulus for $\phi$. Still, it is equivalent since the covering number is restricted to $B(0,R)$. Using \cite{cobzas1978norm}, the function $\phi$ may be extended to an $L$ Lipschitz convex function on the whole space, fitting the formulation in \cite{carlier2025quantitative}.
We also recall the definition of the proximal operator on $\R^d$ \cite{moreau1965proximite}.
\begin{definition}
    For any $f\in \Gamma_0(\R^d)$, the proximal operator is defined, for all $v\in \R^d$, as
    \begin{equation*}
        \prox_f(v)=\underset{x\in\R^d}{\argmin} \ f(x)+\frac{1}{2}\|x-v\|^2.
    \end{equation*}
\end{definition}

Finally, we recall the Fenchel-Young inequality with a remainder due to \cite{carlier2023fenchel}. 
\begin{lemma}
	Let $\phi\in \Gamma_0(\R^d)$. For every $x,p\in \R^d$ and $\lambda>0$ we have
	\begin{align*}
		\phi(x) + \phi^*(p) - \left\langle x, p \right\rangle  \geq \frac{1}{\lambda}\|x - \mathrm{prox}_{\lambda \phi}(x + \lambda p)\|^2.
	\end{align*}
	\label{th:Carlier}
\end{lemma}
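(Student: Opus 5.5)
The plan is a direct first-order argument built on the optimality condition of the proximal point, with no earlier results needed beyond standard convex analysis \cite{rockafellar1970convex}. If $\phi(x)=+\infty$ or $\phi^*(p)=+\infty$ there is nothing to prove, since $\phi^*>-\infty$ everywhere for proper $\phi$. So I would assume both values are finite. I then set
\begin{equation*}
z=\prox_{\lambda\phi}(x+\lambda p).
\end{equation*}
This point is well defined and unique because $\phi\in\Gamma_0(\R^d)$ and $\lambda\phi+\frac12\|\cdot-(x+\lambda p)\|^2$ is proper, lower semicontinuous and strongly convex. The first-order optimality condition reads $0\in\lambda\partial\phi(z)+z-x-\lambda p$. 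Equivalently,
\begin{equation*}
q:=p+\frac{x-z}{\lambda}\in\partial\phi(z).
\end{equation*}
The whole proof then reduces to comparing the pair $(x,p)$ with the ``contact pair'' $(z,q)$, for which Fenchel--Young is an equality: $\phi(z)+\phi^*(q)=\langle z,q\rangle$.

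Next I would write two subgradient inequalities. First, since $q\in\partial\phi(z)$, convexity gives $\phi(x)\geq\phi(z)+\langle q,x-z\rangle$. Second, $q\in\partial\phi(z)$ is equivalent to $z\in\partial\phi^*(q)$, because $\phi\in\Gamma_0(\R^d)$. Convexity of $\phi^*$ then gives $\phi^*(p)\geq\phi^*(q)+\langle z,p-q\rangle$. Summing the two and using the equality case of Fenchel--Young at $(z,q)$ yields
\begin{equation*}
\phi(x)+\phi^*(p)-\langle x,p\rangle\geq \langle z,q\rangle+\langle q,x-z\rangle+\langle z,p-q\rangle-\langle x,p\rangle=\langle q-p,\,x-z\rangle.
\end{equation*}

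Finally, I substitute $q-p=(x-z)/\lambda$, which turns the right-hand side into $\frac1\lambda\|x-z\|^2$, the claimed bound. The only step requiring care is the symmetric use of the subdifferential of the conjugate, namely the equivalence $q\in\partial\phi(z)\iff z\in\partial\phi^*(q)$. This relies on $\phi$ being closed, proper and convex, and is the reason the lemma is stated on $\Gamma_0(\R^d)$. The rest is algebraic, so I do not expect a genuine obstacle.
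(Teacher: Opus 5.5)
Your proof is correct, and it is a different situation from the usual comparison: the paper does not prove this lemma at all. It quotes it from \cite{carlier2023fenchel}, so your argument is a self-contained replacement for a citation rather than a variant of an in-paper proof. Every step checks. The $+\infty$ case is trivial. The prox optimality condition gives $q=p+(x-z)/\lambda\in\partial\phi(z)$; this uses the sum rule, which is valid because the quadratic term is finite everywhere. The two linearizations at the contact pair $(z,q)$ then reduce the Fenchel--Young gap to $\langle q-p,x-z\rangle=\frac{1}{\lambda}\|x-z\|^2$, with the sharp constant.

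One correction to your commentary: the step you flag as delicate is not where $\Gamma_0$ is needed. You only use the implication $q\in\partial\phi(z)\Rightarrow z\in\partial\phi^*(q)$, and this holds for any proper $\phi$. You can even skip both the Fenchel--Young equality at $(z,q)$ and the subgradient inequality for $\phi^*$, replacing them by the single bound $\phi^*(p)\geq\langle p,z\rangle-\phi(z)$, which comes straight from the definition of the conjugate. Closedness and convexity are really used elsewhere: for existence and uniqueness of $z$, and for the characterization $(x+\lambda p-z)/\lambda\in\partial\phi(z)$.

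What your route buys over the citation is transparency about how the lemma is used right afterwards. With $p=\nabla\bar\varphi(x)$, your $q$ is exactly $v_\lambda(x)$ from \eqref{eq:selectionSubdifferential}. The identity $\frac{1}{\lambda}\|x-z\|^2=\lambda\|p-q\|^2$ behind your last step is precisely the one the paper invokes in the proof of \Cref{th:dualErrorBound}. Including your few lines would therefore make the dual error bound fully self-contained, while citing keeps the paper shorter.
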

Extensions around \Cref{th:Carlier} can be found in \cite{bauschke2022carlier}, \cite{burachik2025note} and in \cite{combettes2026lower} with a historical discussion.

\subsection{A dual error bound}
Leveraging the results presented in \Cref{sec:preliminaryErrorBound}, we obtain the following error bound for convex potentials, relating an integrated Fenchel-Young remainder to $L_1$ distance of gradients. In an optimal transport context, this provides an error bound on the dual problem.

\begin{theorem}
Let $\mu$ be absolutely continuous with density upper bounded by $M$ on $\RR^d$, compactly supported in $B_R$ and $\bar{\varphi},\phi$ be convex lower semi-continuous on $\RR^d$ and $L$-Lipschitz on $B_{2R}$.
	Consider any $q \in [1,2]$ and the following constant:
	\begin{align*}
		C_{R,d} = 4 \frac{e}{\sqrt{2\pi} \sqrt{\frac{d}{2} + 1}}\left(\frac{\pi e 64}{\left( \frac{d}{2} + 1 \right)} \right)^{\frac{d}{2}}  48d^2 (2R)^{d-1}.
	\end{align*}
	\textbf{Case 1:} If
	\begin{align*}
		\int \phi(x) + \phi^*(\nabla \bar{\varphi}(x)) - \left\langle x, \nabla \bar{\varphi}(x) \right\rangle  d\mu(x) > \left(\frac{R}{8}\right)^{\frac{q+3}{q+1}} 2L (MC_{R,d})^{\frac{2}{q+1}}
	\end{align*}
	then
	\begin{align*}
		\| \nabla \bar{\varphi} - \nabla \phi\|_{L^q(\mu)}
		\leq\,&  4 \sqrt{\frac{L}{R}} \left( \int \phi(x) + \phi^*(\nabla \bar{\varphi}(x)) - \left\langle x, \nabla \bar{\varphi}(x) \right\rangle  d\mu(x)  \right)^\frac{1}{2} \\ &+ L\left(\frac{32 }{RL} \int \phi(x) + \phi^*(\nabla \bar{\varphi}(x)) - \left\langle x, \nabla \bar{\varphi}(x) \right\rangle  d\mu(x)\right)^{\frac{q+1}{2q}}.
	\end{align*}
	\textbf{Case 2:} Otherwise
	\begin{align*}
		\| \nabla \bar{\varphi} - \nabla \phi\|_{L^q(\mu)}
		\leq 4\left(L^{q+2}MC_{R,d} \right)^{\frac{1}{q+3}} \left( \int \phi(x) + \phi^*(\nabla \bar{\varphi}(x)) - \left\langle x, \nabla \bar{\varphi}(x) \right\rangle  d\mu(x) \right)^{\frac{1}{q+3}}.
	\end{align*}
	\label{th:dualErrorBound}
\end{theorem}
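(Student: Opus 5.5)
The plan is to make the Fenchel--Young remainder pointwise and use \Cref{th:Carlier} with $p=\nabla\bar{\varphi}(x)$. Write $E(x)=\phi(x)+\phi^*(\nabla\bar{\varphi}(x))-\langle x,\nabla\bar{\varphi}(x)\rangle\ge 0$ and $\mathcal{E}=\int E\,d\mu$. For $\lambda>0$, set $z=\prox_{\lambda\phi}(x+\lambda\nabla\bar{\varphi}(x))$. The optimality condition of the prox gives $w:=\nabla\bar{\varphi}(x)+(x-z)/\lambda\in\partial\phi(z)$, and \Cref{th:Carlier} gives $\|x-z\|\le\sqrt{\lambda E(x)}$. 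Hence, at every $x\in\supp\mu$ where $\phi$ is differentiable (Lebesgue, hence $\mu$-almost every $x$),
\begin{align*}
\|\nabla\phi(x)-\nabla\bar{\varphi}(x)\|\le \|\nabla\phi(x)-w\|+\sqrt{E(x)/\lambda}.
\end{align*}
If $\|x-z\|<\eta$, then $\nabla\phi(x)$ and $w$ both lie in $\partial\phi(B(x,\eta))$. So, for $x\notin\Sigma_{\eta,\alpha}$ (the singular set of \Cref{th:coveringNumber} applied to $\phi$), the first term is at most $\alpha$. I also keep the crude bound $\|\nabla\phi-\nabla\bar{\varphi}\|\le 2L$ on $B_R$, which holds since both functions are $L$-Lipschitz on $B_{2R}$.

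Next, partition $\supp\mu$ into three pieces. The good set is $G=\{E\le\eta^2/\lambda\}\setminus\Sigma_{\eta,\alpha}$. On $G$, $\|x-z\|\le\eta$; I will take $\eta$ small, say $\eta\le R/8$, so that $z$ stays in $B_{2R}$ where $\phi$ is controlled, and I will use a strict-inequality version to be safe. The error on $G$ is then $\le\alpha+\sqrt{E/\lambda}$. The second piece is $\{E>\eta^2/\lambda\}$, whose $\mu$-measure is at most $\lambda\mathcal{E}/\eta^2$ by Markov; there I use the error bound $2L$. The third piece is $\Sigma_{\eta,\alpha}\cap B_R$. By \Cref{th:coveringNumber} with $R+4\eta\le 2R$, it is covered by at most $48d^2(2R)^{d-1}L/(\alpha\eta^{d-1})$ balls of radius $8\eta$. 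Each ball has Lebesgue volume $\pi^{d/2}(8\eta)^d/\Gamma(d/2+1)$. I will bound $\Gamma$ from below by Stirling, which should produce exactly the factor $\frac{e}{\sqrt{2\pi}\sqrt{d/2+1}}(\pi e64/(d/2+1))^{d/2}$ appearing in $C_{R,d}$, up to the leading $4$ and $8^d\eta^d$ absorbed. With density $\le M$ this gives $\mu(\Sigma_{\eta,\alpha}\cap B_R)\lesssim MC_{R,d}L\eta/\alpha$, and again the error there is $2L$.

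Summing $q$-th powers and using $(a+b)^q\le 2^{q-1}(a^q+b^q)$ together with Jensen ($q\le2$) for $\int (E/\lambda)^{q/2}\le(\mathcal{E}/\lambda)^{q/2}$, I get
\begin{align*}
\|\nabla\phi-\nabla\bar{\varphi}\|_{L^q(\mu)}^q\lesssim \alpha^q+(\mathcal{E}/\lambda)^{q/2}+(2L)^q\Big(\frac{\lambda\mathcal{E}}{\eta^2}+\frac{MC_{R,d}L\eta}{\alpha}\Big).
\end{align*}
The last step is to optimize over $\alpha,\eta,\lambda$. Balancing all terms should give the exponent $1/(q+3)$ in $\mathcal{E}$ and the prefactor $(L^{q+2}MC_{R,d})^{1/(q+3)}$ of Case 2. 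However, the optimal $\eta$ grows with $\mathcal{E}$, and the constraint $\eta\le R/8$ becomes active exactly when $\mathcal{E}$ exceeds the threshold $(R/8)^{\frac{q+3}{q+1}}2L(MC_{R,d})^{2/(q+1)}$. That is Case 1: there I freeze $\eta=R/8$ and optimize only $\lambda$ and $\alpha$. This yields a $\sqrt{L\mathcal{E}/R}$ term and a term $L(\frac{32}{RL}\mathcal{E})^{\frac{q+1}{2q}}$.

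I expect the main obstacle to be this bookkeeping rather than any conceptual step: choosing the parameters so that the constants come out exactly as stated, and checking that the threshold separating the cases matches where $\eta=R/8$ binds. A secondary technical point is making sure $z$ stays in the region where the Lipschitz/covering information applies, so that $\partial\phi(B(x,\eta))$ is the relevant set; this is what the smallness $\eta\le R/8$ and $x\in B_R$ should handle.
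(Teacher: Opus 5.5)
Your argument is sound, but it localizes the prox point differently from the paper, and this changes the bookkeeping more than you expect. The paper's $v_\lambda$ is your $w$.

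\textbf{How the paper localizes.} The paper needs no Markov step. Since $x=\prox_{\lambda\phi}(x+\lambda v)$ for $v\in\partial\phi(x)$ and $\prox_{\lambda\phi}$ is $1$-Lipschitz, one has $\|z_\lambda(x)-x\|\le\lambda\|\nabla\bar{\varphi}(x)-v\|\le 2L\lambda$ for \emph{every} $x\in\supp\mu$. Setting $\eta=2L\lambda$ keeps $z_\lambda(x)$ within distance $\eta$ of $x$, hence in $B_{2R}$. This settles your secondary technical point and gives $\|v_\lambda\|\le L$. So $\Sigma_{\eta,\alpha}$ is the only exceptional set, and Carlier is used only in integrated form: $\|\nabla\bar{\varphi}-v_\lambda\|_{L^q(\mu)}\le\|\nabla\bar{\varphi}-v_\lambda\|_{L^2(\mu)}\le\sqrt{\mathcal{E}/\lambda}$. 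Minkowski then gives $\|\nabla\bar{\varphi}-\nabla\phi\|_{L^q(\mu)}\le\sqrt{2L\mathcal{E}/\eta}+2L\,\mu(\Sigma_{\eta,\alpha}\cap B_R)^{1/q}+\alpha$. With only $(\eta,\alpha)$ free, balancing gives exactly $2(2L)^{\frac{q+2}{q+3}}(MC_{R,d}\mathcal{E})^{\frac{1}{q+3}}$, and the requirement $\eta\le R/8$ is exactly the Case 2 condition.

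\textbf{What your decoupling changes.} Your pointwise Carlier plus Markov step decouples $\lambda$ from $\eta$, at the price of the extra term $(2L)^q\lambda\mathcal{E}/\eta^2$. In $L^q$ norm this term scales like $\mathcal{E}^{1/q}$ rather than $\mathcal{E}^{1/2}$, so you can afford $\lambda\gg\eta/(2L)$. Balancing your four terms ($\alpha\sim\epsilon$, $\eta\sim\epsilon^{q+1}$, $\lambda\sim\mathcal{E}\epsilon^{-2}$, $\lambda\mathcal{E}\sim\epsilon^{3q+2}$) therefore gives $\epsilon\sim\mathcal{E}^{2/(3q+4)}$, not $\mathcal{E}^{1/(q+3)}$. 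The two exponents coincide only at $q=2$, and yours is strictly better for $q<2$ (e.g.\ $2/7$ instead of $1/4$ at $q=1$). So two of your expectations are wrong for your parametrization: balancing does not reproduce the Case 2 constants, and the case threshold is not where $\eta=R/8$ becomes active.

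\textbf{How to close the argument.} This is not fatal, but you need a reconciliation step.
\begin{itemize}
\item Since $\|\nabla\phi-\nabla\bar{\varphi}\|\le 2L$ $\mu$-almost everywhere, the statement is trivial whenever its right-hand side is at least $2L$.
\item That always happens in Case 1. The threshold gives $L\bigl(\frac{32}{RL}\mathcal{E}\bigr)^{\frac{q+1}{2q}}>2^{\frac{3q-3}{2q}}L\,(MC_{R,d}R)^{1/q}$, and $MC_{R,d}R\ge 4\cdot 48\cdot 8$ because $M\,\mathrm{Vol}(B_R)\ge\mu(B_R)=1$.
\item In Case 2 you may therefore assume $w:=MC_{R,d}\mathcal{E}/L<2^{-(q+3)}$. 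Your optimized bound has the form $K_qLw^{2/(3q+4)}$, which lies below $4Lw^{1/(q+3)}$ as soon as $K_q\le 4$ and the optimizing $\eta$ satisfies $\eta\le R/4$.
\item Both conditions can be checked. For instance $K_1=7/2$, using $M\,\mathrm{Vol}(B(0,8\eta))\,\mathcal{N}(\Sigma_{\eta,\alpha}\cap B_R,8\eta)\le MC_{R,d}L\eta/(4\alpha)$.
\end{itemize}
Alternatively, use the nonexpansiveness bound above. With $\lambda=\eta/(2L)$ every $x$ satisfies $\|x-z\|\le\eta$, so you no longer need to exclude the Markov set, and your proof collapses to the paper's with its constants.

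In short, the paper's coupling $\eta=2L\lambda$ makes the stated constants and case split fall out directly. Your decoupled version costs this extra verification but yields a sharper exponent for $q<2$.
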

\begin{remark}
    The following equivalent expressions explicitly establish the connection with the dual problem of optimal transport in \eqref{eq:preliminary_semi_dual}, where $\nu$ is the pushforward of $\mu$ by $\nabla \bar{\varphi}$. This is a consequence of the fact that $\left\langle x, \nabla \bar{\varphi}(x) \right \rangle =  \bar{\varphi}(x) + \bar{\varphi}^*(\nabla \bar{\varphi}(x))$ for $\mu$ almost all $x$, and
    \begin{align*}
        &\int \phi(x) + \phi^*(\nabla \bar{\varphi}(x)) - \left\langle x, \nabla \bar{\varphi}(x) \right\rangle  d\mu(x) \\
        = &\int \phi(x) + \phi^*(\nabla \bar{\varphi}(x))d\mu(x) - \int \bar{\varphi}(x) + \bar{\varphi}^*(\nabla \bar{\varphi}(x)) d\mu(x) = \int \phi d\mu + \int \phi^*d\nu - \left(\int \bar{\varphi} d\mu + \int \bar{\varphi}^*d\nu \right).
    \end{align*}
     In this case, the right hand side is the dual suboptimality gap of the optimal transport problem between $\mu$ and $\nu$ in \Cref{th:classical_duality_theorem}.
    \label{rem:connectionDual}
\end{remark}
\begin{proof}
	For any $\lambda > 0$ (to be chosen later), and $\mu$ almost all $x$, set $z_\lambda(x) = \prox_{\lambda \phi}(x + \lambda \nabla \bar{\varphi}(x))$. This is defined by the relation $z_\lambda(x) = (I + \lambda \partial \phi)^{-1}(x + \lambda \nabla \bar \varphi(x))$ or equivalently,
	\begin{align}
		\label{eq:selectionSubdifferential}
		v_\lambda (x):= \nabla \bar{\varphi}(x) + \frac{x- z_\lambda(x)}{\lambda} \in \partial \phi (z_\lambda(x)).
	\end{align}
    We have $\prox_{\lambda \phi}(x + \lambda v) = x$ for any $v \in \partial \phi(x)$. Moreover, the $\prox$ mapping being $1$-Lipschitz, we deduce that for all $x \in B(0,R)$,
	\begin{align}
        \label{eq:definitionEta}
		\|z_\lambda(x) - x \| = \|\prox_{\lambda \phi}(x + \lambda \nabla \bar{\varphi}(x)) - \prox_{\lambda \phi}(x + \lambda v)\|  \leq \lambda \| \nabla \bar\varphi(x) -v \|\leq 2 \lambda L := \eta,
	\end{align}
	where $\eta$ is a constant appearing in the covering number in \Cref{th:coveringNumber} to be chosen later. For the last inequality, we have used the fact that both $\bar{\varphi}$ and $\phi$ are $L$-Lipschitz in a neighborhood of any $x \in \supp\ \mu$. We are going to consider only values $\eta \leq R$, so that $z_\lambda(x) \in B(0,2R)$ for any $x \in \supp\ \mu$, and the corresponding $v_\lambda(x)$ is bounded by $L$ using the Lipschitz continuity of $\phi$ and the characterization \eqref{eq:selectionSubdifferential}.
	Set
	\begin{align*}
		\delta = \int \phi(x) + \phi^*(\nabla \bar{\varphi}(x)) - \left\langle x, \nabla \bar{\varphi}(x) \right\rangle  d\mu(x).
	\end{align*}
	By Carlier's inequality in \Cref{th:Carlier}, we have for any $\lambda > 0$, using \eqref{eq:selectionSubdifferential},
	\begin{align*}
		\delta \geq \frac{1}{\lambda} \int\|x - z_\lambda(x)\|^2 d\mu(x) = \lambda \int\|\nabla \bar{\varphi}(x) - v_\lambda(x)\|^2 d\mu(x).
	\end{align*}
	We deduce, using Jensen inequality with $q \leq 2$, and introducing $\mathbb{I}$ the function that maps a boolean value true/false to the corresponding numerical value 1/0,
	\begin{align}
		\label{eq:ineqTemp1}
		\|\nabla \bar{\varphi} - v_\lambda\|_{L^q(\mu)}^2  \leq \left(\int\|\nabla \bar{\varphi}(x) - v_\lambda(x)\|^2 d\mu(x)\right)^{\frac{q}{2} \times \frac{2}{q}} = \int\|\nabla \bar{\varphi}(x) -  v_\lambda(x)\|^2 d\mu(x) \leq \frac{\delta}{\lambda}.
	\end{align}
    For any $\alpha \geq 0$ (appearing in the covering number in \Cref{th:coveringNumber}, to be chosen later), and $\eta =2L\lambda \leq R$, we have using the definition of $v_\lambda$ in \eqref{eq:selectionSubdifferential},
	\begin{align}
		\int \| v_\lambda - \nabla \phi\|^q d\mu \leq\,& (2L)^q \int \mathbb{I}(\| v_\lambda(x) - \nabla \phi(x)\| \geq \alpha)d\mu(x)+ \alpha^q \int \mathbb{I}(\| v_\lambda(x) - \nabla \phi(x)\| < \alpha)d\mu(x)\nonumber\\
		\leq\,& (2L)^q \int \mathbb{I}(\mathrm{diam}(\partial \phi(B(x,\eta))) \geq \alpha) d\mu(x) + \alpha^q\nonumber\\
		\leq\,& (2L)^qM \ \mathrm{Vol}(B(0,8\eta)) \ \mathcal{N}(\Sigma_{\eta,\alpha} \cap B(0,R), 8 \eta) + \alpha^q,
		\label{eq:ineqTemp2}
	\end{align}
	where the first inequality is because both $v_\lambda$ and $\nabla \phi$ are bounded by $L$ on $\supp\ \mu$, the second is because of the characterization \eqref{eq:selectionSubdifferential} and the upper bound \eqref{eq:definitionEta}, and the last one is a covering argument combined with the fact that $\mu$ has a density bounded by $M$, $\Sigma_{\eta,\alpha}$ is given in \Cref{th:coveringNumber}. 
    
	We have, combining \eqref{eq:ineqTemp1} and \eqref{eq:ineqTemp2}, for any $\eta \in (0,R)$ and $\alpha \geq 0$, using $2 L \lambda = \eta$
	\begin{align}
		\| \nabla \bar{\varphi} - \nabla \phi\|_{L^q(\mu)} &\leq \| \nabla \bar{\varphi} - v_\lambda\|_{L^q(\mu)} + \| v_\lambda - \nabla \phi\|_{L^q(\mu)} \nonumber\\
		&\leq \sqrt{\frac{2L\delta}{\eta}} + \left((2L)^q M \ \mathrm{Vol}(B(0,8\eta)) \ \mathcal{N}(\Sigma_{\eta,\alpha} \cap B(0,R), 8 \eta) + \alpha^q \right)^{\frac{1}{q}}\nonumber \\ 
		&\leq \sqrt{\frac{2L\delta}{\eta}} + 2L \left(M \ \mathrm{Vol}(B(0,8\eta)) \ \mathcal{N}(\Sigma_{\eta,\alpha} \cap B(0,R), 8 \eta) \right)^{\frac{1}{q}}+ \alpha , 
		\label{eq:mainIneq}
	\end{align}
	where we have used the subaditivity of $t \mapsto t^{1/q}$ for $q\geq 1$.

	We are going to work the two cases separately. Let us consider the first case $\delta > \left(\frac{R}{8}\right)^{\frac{q+3}{q+1}} 2L (MC_{R,d})^{\frac{2}{q+1}} \geq \frac{R}{32} L (MC_{R,d}R)^{\frac{2}{q+1}}  $. We choose $\eta = \frac{R}{8}$ and $\alpha = 0$. In this case the covering number in \eqref{eq:mainIneq} is $\mathcal{N}(\Sigma_{\eta,\alpha} \cap B(0,R), 8 \eta) = 1$. We expand the right hand side of \eqref{eq:mainIneq}, using the volume estimates of \Cref{Lem:volume_estimate_ball} 
	\begin{align*}
		4 \sqrt{\frac{L\delta}{R}} + L \left( 2^qM \ \mathrm{Vol}(B(0,R))\right)^{\frac{1}{q}} &\leq 4 \sqrt{\frac{L\delta}{R}} + L \left(M  \frac{2^qe}{\sqrt{2\pi} \sqrt{\frac{d}{2} + 1}}
	\left(\frac{\pi e}{\left( \frac{d}{2} + 1 \right)} \right)^{\frac{d}{2}} R^d\right)^{\frac{1}{q}}\\
		&\leq 4 \sqrt{\frac{L\delta}{R}} + L(M C_{R,d}R)^{\frac{1}{q}} \leq 4 \sqrt{\frac{L\delta}{R}} + L\left(\frac{32 \delta}{RL} \right)^{\frac{q+1}{2q}}.
	\end{align*}
	This concludes the first point.

	Let us now consider the second case: $\delta \leq \left(\frac{R}{8}\right)^{\frac{q+3}{q+1}} 2L (MC_{R,d})^{\frac{2}{q+1}}$. Using \Cref{th:coveringNumber} and the volume estimate in \Cref{Lem:volume_estimate_ball},  \eqref{eq:mainIneq}  becomes, for any $\eta \in ]0,R[$ and $\alpha \geq 0$,
	\begin{align*}
		\| \nabla \bar{\varphi} - \nabla \phi\|_{L^q(\mu)} 
		\leq\,& L \left(M
		\frac{2^qe}{\sqrt{2\pi}\sqrt{\frac{d}{2} + 1} }
		\left(\frac{\pi e 64}{\left( \frac{d}{2} + 1 \right)} \right)^{\frac{d}{2}}  48d^2 (R+4\eta)^{d-1} \frac{\eta L}{\alpha}  \right)^{\frac{1}{q}}
		+ \alpha + \sqrt{\frac{2L \delta}{\eta}}.
	\end{align*}
	For any $\eta \leq \frac{R}{4}$, we get $R + 4 \eta  \leq 2R$ so that, for any $\alpha \geq 0$,
	\begin{align*}
		\| \nabla \bar{\varphi} - \nabla \phi\|_{L^q(\mu)} &\leq  L\left(LM C_{R,d}\frac{\eta}{\alpha}\right)^{\frac{1}{q}}  + \alpha  + \sqrt{\frac{2L \delta}{\eta}}
	\end{align*}
	Consider $\alpha = L \left( M C_{R,d} \eta \right)^{\frac{1}{q+1}} $, we have
	\begin{align*}
		\| \nabla \bar{\varphi} - \nabla \phi\|_{L^q(\mu)}  &\leq  L\left((M C_{R,d}\eta)^{1 - \frac{1}{q+1}}\right)^{\frac{1}{q}}  + L \left( M C_{R,d} \eta \right)^{\frac{1}{q+1}}  + \sqrt{\frac{2L \delta}{\eta}} =  2 L( M C_{R,d}\eta)^{\frac{1}{q+1}} + \sqrt{\frac{2L\delta}{\eta}}
	\end{align*}
	Choosing $\eta = \left(\frac{\sqrt{\delta}}{\sqrt{2L} (M C_{R,d})^{\frac{1}{q+1}}}\right)^{\frac{2q + 2}{q+3}} \leq \frac{R}{8}$, we compute (details in \Cref{lem:technicalComputation})
	\begin{align*}
		\| \nabla \bar{\varphi} - \nabla \phi\|_{L^q(\mu)}  &\leq 2 (2L)^{\frac{q+2}{q+3}} (MC_{R,d} \delta)^{\frac{1}{q+3}} \leq  4\left(L^{q+2}MC_{R,d} \delta\right)^{\frac{1}{q+3}}. 
	\end{align*}
	This concludes the proof of the second point.
\end{proof}

\begin{lemma}
    Assume $\delta,L,M,C > 0$ and $q \in [1,2]$.
	For $\eta = \left(\frac{\sqrt{\delta}}{\sqrt{2L} (M C)^{\frac{1}{q+1}}}\right)^{\frac{2q + 2}{q+3}}$, we have
	\begin{align*}
		2 L( M C\eta)^{\frac{1}{q+1}} + \sqrt{\frac{2L\delta}{\eta}} = 2 (2L)^{\frac{q+2}{q+3}} (MC\delta)^{\frac{1}{q+3}}.
	\end{align*}
	\label{lem:technicalComputation}
\end{lemma}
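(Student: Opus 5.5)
This is a direct computation: substitute the given $\eta$ into both terms and check that each equals $(2L)^{\frac{q+2}{q+3}}(MC\delta)^{\frac{1}{q+3}}$. The sum is then twice this quantity, as claimed.

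Write $K = (MC)^{\frac{1}{q+1}}$ and $s = \frac{2q+2}{q+3}$. Then $\eta = \delta^{\frac{q+1}{q+3}}\,(2L)^{-\frac{q+1}{q+3}}\,K^{-s}$. Since $K^{q+1} = MC$, we have $K^{-s} = (MC)^{-\frac{2}{q+3}}$, which gives
\begin{align*}
\eta = \delta^{\frac{q+1}{q+3}}\,(2L)^{-\frac{q+1}{q+3}}\,(MC)^{-\frac{2}{q+3}}.
\end{align*}

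For the first term, compute
\begin{align*}
MC\eta = (MC)^{\frac{q+1}{q+3}}\,\delta^{\frac{q+1}{q+3}}\,(2L)^{-\frac{q+1}{q+3}},
\end{align*}
so that $(MC\eta)^{\frac{1}{q+1}} = (MC\delta)^{\frac{1}{q+3}}(2L)^{-\frac{1}{q+3}}$. Multiplying by $2L$ gives
\begin{align*}
2L(MC\eta)^{\frac{1}{q+1}} = (2L)^{\frac{q+2}{q+3}}(MC\delta)^{\frac{1}{q+3}}.
\end{align*}

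For the second term, compute
\begin{align*}
\frac{2L\delta}{\eta} = (2L)^{1+\frac{q+1}{q+3}}\,\delta^{1-\frac{q+1}{q+3}}\,(MC)^{\frac{2}{q+3}} = (2L)^{\frac{2q+4}{q+3}}\,\delta^{\frac{2}{q+3}}\,(MC)^{\frac{2}{q+3}}.
\end{align*}
Taking the square root gives
\begin{align*}
\sqrt{\frac{2L\delta}{\eta}} = (2L)^{\frac{q+2}{q+3}}(MC\delta)^{\frac{1}{q+3}}.
\end{align*}

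The two terms are therefore equal, and their sum is $2(2L)^{\frac{q+2}{q+3}}(MC\delta)^{\frac{1}{q+3}}$, which is the claimed identity. There is no real obstacle here; the only care needed is keeping track of the exponents. The choice of $\eta$ is precisely the one that balances the two terms.
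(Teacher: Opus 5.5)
Your proof is correct and follows essentially the same route as the paper. You substitute $\eta$, track the exponents of $2L$, $MC$ and $\delta$ in each term, and show that both terms equal $(2L)^{\frac{q+2}{q+3}}(MC\delta)^{\frac{1}{q+3}}$, so their sum is twice that quantity.
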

The proof of this Lemma can be found in \Cref{sec:LemmaPowers}.

\subsection{Simplification and application to stability of optimal transport maps}
We now only consider the case $q=1$. The following holds because under the appropriate assumptions, all quantities of interest are bounded so that the $\frac{1}{q+3} = \frac{1}{4}$ exponent dominates over the exponent $\frac{q+1}{2q} = 1$ in \Cref{th:dualErrorBound}, see also \Cref{rem:connectionDual}.
\begin{corollary}
For any $d,R,M$, there is $C>0$ such that the following holds. Let $\mu$, $\nu$ be probability measures compactly supported in $B_R$ such that $\mu(A) \leq M \lambda(A)$ for any measurable set $A \subset \RR^d$. Let $\bar{\varphi}$ be an $R$-Lipschitz Brenier potential for $\mu$ and $\nu$, optimal for \eqref{DP} as in \Cref{th:classical_duality_theorem}.  Then  for any $\phi \in \mathcal{C}_{2R,2R}$ (as in \Cref{lem:brenier_potentials}), we have
    \begin{align*}
       \int \| \nabla \bar{\varphi}(x) - \nabla \phi(x)\| d\mu(x) &\leq C\left(\int \phi d\mu + \phi^*d\nu - \int \bar{\varphi} d \mu + \bar{\varphi}^* d\nu\right)^{\frac{1}{4}},
    \end{align*}
    where $C= \max\left(268\sqrt{R}, 4(8R^3MC_{R,d})^{1/4}\right)$ with $C_{R,d}$ defined in \Cref{th:dualErrorBound}. 
    \label{cor:simplerForm}
\end{corollary}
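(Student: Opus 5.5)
The plan is to apply \Cref{th:dualErrorBound} with $q=1$ and $L=2R$, and then collapse its two cases into a single power-$\frac14$ bound. Both $\bar{\varphi}$ (globally $R$-Lipschitz) and $\phi\in\mathcal{C}_{2R,2R}$ are convex, lower semi-continuous and $2R$-Lipschitz on $B_{2R}$. Also $\mu$ is supported in $B_R$ with density bounded by $M$. So the hypotheses of \Cref{th:dualErrorBound} hold.

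The first step is to identify its left quantity with the dual gap. By \Cref{th:classical_duality_theorem}, since $\mu$ is absolutely continuous, the optimal coupling is $(Id,\nabla\bar\varphi)_\sharp\mu$, so $\nu=(\nabla\bar\varphi)_\sharp\mu$. Then \Cref{rem:connectionDual} gives
\begin{align*}
\delta:=\int \phi(x)+\phi^*(\nabla\bar\varphi(x))-\langle x,\nabla\bar\varphi(x)\rangle\, d\mu(x)=\int\phi\, d\mu+\int\phi^*d\nu-\int\bar\varphi\, d\mu-\int\bar\varphi^*d\nu .
\end{align*}
This quantity is nonnegative by Fenchel--Young. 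Case 2 of \Cref{th:dualErrorBound} with $q=1$, $L=2R$ then directly gives $\|\nabla\bar\varphi-\nabla\phi\|_{L^1(\mu)}\le 4\left((2R)^3MC_{R,d}\,\delta\right)^{1/4}$, which is the second term in the max defining $C$.

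The main obstacle is Case 1, where $\delta$ exceeds the threshold $\delta_0=(R/8)^{2}\,4R\,(MC_{R,d})$. The bound there has the form $4\sqrt{2}\,\delta^{1/2}+2R\,(16\delta/R^2)^{1}$, whose exponents are larger than $1/4$. I would handle this by an a priori upper bound on $\delta$ rather than by these exponents directly. Since $\phi$ and $\bar\varphi$ are $2R$-Lipschitz on $B_{2R}$ and $\phi^*,\bar\varphi^*$ are $2R$-Lipschitz on $B_{2R}$, each integrand is controlled once we normalize additive constants. The gap is invariant under $\phi\mapsto\phi+c$, since $\phi^*\mapsto\phi^*-c$. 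Using $\phi(x)+\phi^*(y)\ge\langle x,y\rangle$, and choosing $x_0$ with $\nabla\bar\varphi$ evaluated appropriately, the integrand $\phi(x)+\phi^*(\nabla\bar\varphi(x))-\langle x,\nabla\bar\varphi(x)\rangle$ is bounded by an explicit multiple of $R^2$, say $\delta\le cR^2$. More simply, both $\nabla\phi$ and $\nabla\bar\varphi$ are bounded by $2R$ on $\supp\mu$, so the left-hand side is at most $4R$ trivially.

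With that bound, in Case 1 we have $\delta>\delta_0$. The trivial bound $\|\nabla\bar\varphi-\nabla\phi\|_{L^1(\mu)}\le 4R$ then gives $4R\le 4R\,(\delta/\delta_0)^{1/4}$. Equivalently, I would write $\delta^{1/2}=\delta^{1/4}\delta^{1/4}$ and $\delta=\delta^{1/4}\delta^{3/4}$, bound $\delta\le cR^2$, and collect the constants into something like $268\sqrt R$. Either route yields a constant depending only on $d,R,M$.

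Taking $C$ to be the maximum of the Case 1 and Case 2 constants concludes the proof. Matching the precise numerical value $268\sqrt{R}$ requires tracking the bound $\delta\lesssim R^2$ carefully, which is a routine computation.
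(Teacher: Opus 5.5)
Your proposal is correct, but it handles Case~1 differently from the paper. The paper first bounds the gap a priori. With $v\in\partial\phi(0)$, it uses $\phi^*(v)=-\phi(0)$ and the $2R$-Lipschitz continuity of $\phi^*$ on $B_{2R}$ to get $\phi(x)+\phi^*(y)\le 8R^2$ on $B_R\times B_R$, hence $\delta\le 16R^2$. It then absorbs the Case~1 terms $4\sqrt{2}\,\delta^{1/2}+\frac{32}{R}\delta$ into $268\sqrt{R}\,\delta^{1/4}$.

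You instead bound the left-hand side a priori: $\|\nabla\bar\varphi-\nabla\phi\|\le 3R$ holds $\mu$-a.e. You then use Case~1 only through its defining inequality $\delta>\delta_0=R^3MC_{R,d}/16$. This is shorter and never uses that $\phi^*$ is Lipschitz, so it works for any convex l.s.c.\ $\phi$ that is $2R$-Lipschitz on $B_{2R}$. The paper's route, in exchange, gives an $M$-independent Case~1 constant. It is also the ``bound the gap, keep the smallest power'' template reused in \Cref{cor:simplerform2_L2} and \Cref{th:stabilityCoupling}.

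As written, though, your argument only yields some constant $C(d,R,M)$, not the stated one. Your Case~1 constant $4R\,\delta_0^{-1/4}=8R^{1/4}(MC_{R,d})^{-1/4}$ is not visibly bounded by the stated $C$. Your fallback sketch of $\delta\lesssim R^2$ (``choosing $x_0$ with $\nabla\bar\varphi$ evaluated appropriately'') is not an actual argument.

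The fix within your own route is one line. Since $\mu$ is a probability measure on $B_R$ with density at most $M$, we have $M\,\mathrm{Vol}(B_R)\ge 1$. Comparing $C_{R,d}$ with \Cref{Lem:volume_estimate_ball} then gives $RMC_{R,d}\ge 96d^2 16^d$. Hence in Case~1, $4(8R^3MC_{R,d}\,\delta)^{1/4}>4(8R^3MC_{R,d}\,\delta_0)^{1/4}=2^{7/4}R\,(RMC_{R,d})^{1/2}\ge 4R$. So the Case~2 constant $4(8R^3MC_{R,d})^{1/4}$ alone covers both cases, and it is at most the stated maximum.
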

\begin{proof}
    We want to establish an upper bound on $\delta:=\int\phi d\mu+\int \phi^*d\nu-\left( \int \bar{\varphi}d\mu+\int \bar{\varphi}^*d\nu \right)$ using \Cref{th:dualErrorBound}, see \Cref{rem:connectionDual}.
    Let  $v\in\partial\phi(0)$, we have $\|v\|\leq 2R$ since $\phi$ is $2R$-Lipschitz on $B_{2R}$. Since $\phi^*$ is also $2R$ Lipschitz on $B_{2R}$, we have for all $y\in \supp \nu \subset B_R$:
    \begin{align*}
            \phi^*(y)&=\phi^*(y)-\phi^*(v)+\phi^*(v) \leq2R\|y-v\|+\phi^*(v) \leq 6R^2+\phi^*(v).
    \end{align*}
    We also have $0\in\partial\phi^*(v)$, so that $v \in \argmin \phi^*$, and by the Fenchel-Young identity we obtain $\phi^*(v)=-\phi(0)$ and thus for all $x\in \supp \mu \subset B_R$ and all $y\in \supp \nu$:
    \begin{align*}
            \phi(x)+\phi^*(y)&\leq\phi(x)+ 6R^2+\phi^*(v)
            =6R^2+\phi(x)-\phi(0)
            \leq 6R^2+2R\|x\|
            \leq 8R^2.
    \end{align*}
    By \Cref{lem:brenier_potentials}, we may assume that $\bar{\varphi} \in \mathcal{C}_{2R,2R}$. We may repeat the same argument for $\bar{\varphi}$ to obtain $\delta\leq 16R^2:=\bar{\delta}$. Upon applying Theorem \ref{th:dualErrorBound} for $q=1$, we have in case 1, with $L=2R$:
    \begin{align*}
            \int_{\R^d}\|\nabla \phi(x)-\nabla\bar{\varphi}(x)\|d\mu(x)&\leq4\sqrt{\frac{L}{R}}\delta^{1/2}+\frac{32}{R}\delta
            \leq \left(8\sqrt{2}\bar{\delta}^{1/4}+\frac{32}{R}\bar{\delta}^{3/4}\right)\delta^{1/4}
            \leq 268\sqrt{R}\delta^{1/4}
    \end{align*}
 
    And in case 2, we have $\int_{\R^d}\|\nabla \phi(x)-\nabla\bar{\varphi}(x)\|d\mu(x)\leq 4(8R^3MC_{R,d})^{1/4}\delta^{1/4}$, which concludes.
\end{proof}
\Cref{cor:simplerForm} allows to obtain a stability bound for gradients of Brenier potentials.
\begin{corollary}
For any $d,R,M$ there is $C$ positive such that for any $\mu,\nu,\bar{\varphi}$ as in \Cref{cor:simplerForm}, for any $\hat{\mu}$, $\hat{\nu}$ supported on $B_R$ with $\phi_0 \in \Gamma_0(\R^d)$ an associated  Brenier potential which is $2R$-Lipschitz on $B_{2R}$, we have
    \begin{align*}
       \int \| \nabla \bar{\varphi}(x) - \nabla \phi_0(x)\| d\mu(x) &\leq C\left(W_1(\mu,\hat{\mu}) + W_1 (\nu, \hat{\nu})\right)^{\frac{1}{4}}. 
    \end{align*}    
    \label{cor:simplerForm2}
\end{corollary}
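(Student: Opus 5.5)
The plan is to reduce \Cref{cor:simplerForm2} to \Cref{cor:simplerForm}: I will show that the dual suboptimality gap of $\phi_0$ for the population problem $(\mu,\nu)$ is controlled by $W_1(\mu,\hat\mu)+W_1(\nu,\hat\nu)$, and then take fourth roots. Set
\begin{align*}
\delta := \int \phi_0 d\mu + \int \phi_0^* d\nu - \int \bar\varphi d\mu - \int \bar\varphi^* d\nu .
\end{align*}
Replacing $\phi_0$ and $\bar\varphi$ via \Cref{lem:brenier_potentials}, I will first arrange that both lie in $\mathcal{C}_{2R,2R}$. This modification must not change the values of either potential (or of its conjugate) on the relevant supports.

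\textbf{Regularity of the potentials.} For $\bar\varphi$, \Cref{lem:brenier_potentials} with $R_\mu=R_\nu=R$ gives a version in $\mathcal{C}_{R,R}\subset\mathcal{C}_{2R,2R}$ that agrees with $\bar\varphi$ on $\supp\mu$. Since $\bar\varphi$ is convex and finite near $\supp\mu$, both versions have the same gradient $\mu$-a.e., so the left-hand side is unchanged. For $\phi_0$, the function is assumed $2R$-Lipschitz on $B_{2R}$, but I also need $\phi_0^*$ to be $2R$-Lipschitz on $B_{2R}$ (or at least $R$-Lipschitz on $B_R$). This does not follow from optimality alone, and it is the main obstacle. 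My first attempt is to apply the construction of \Cref{lem:brenier_potentials} (through \Cref{lem:identification_lipschitz}) to $(\hat\mu,\hat\nu)$. This produces $\phi_1\in\mathcal{C}_{R,R}$ that coincides with $\phi_0$ on $\supp\hat\mu$, with $\phi_1^*=\phi_0^*$ on $\supp\hat\nu$. However, $\nabla\phi_1$ may differ from $\nabla\phi_0$ off $\supp\hat\mu$, and $\mu$ lives off $\supp\hat\mu$, so this replacement is not free. I would therefore check whether the proof of the lemma can be arranged to modify only the conjugate, for instance by taking $\phi_1=(\phi_0^*+\delta(\cdot|\bar B_R))^*$. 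If $\phi_0$ is $R$-Lipschitz on all of $\R^d$ with $\partial\phi_0\subset\bar B_R$, this equals $\phi_0$, and $\phi_0^*$ is then automatically finite only on $\bar B_R$. If that fails, the fallback is to state the result for potentials in $\mathcal{C}_{2R,2R}$, which is the case for the explicit $\varphi_g$ of \Cref{def:mainEstimator}. Indeed, $\varphi_g$ is globally $R$-Lipschitz, and its conjugate is the polyhedral function $y\mapsto \min\{\sum_k\lambda_k(\tfrac12\|y_k\|^2-g_k)\}$ over convex combinations of the $y_k$ equal to $y$, so Lipschitz control on $B_R$ must still be verified.

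\textbf{Bounding the gap by Wasserstein distances.} I would add and subtract the empirical dual values:
\begin{align*}
\delta = \Big(\int\phi_0 d(\mu-\hat\mu) + \int\phi_0^* d(\nu-\hat\nu)\Big) + \Big(\int\phi_0 d\hat\mu+\int\phi_0^*d\hat\nu - \int\bar\varphi d\hat\mu-\int\bar\varphi^*d\hat\nu\Big) + \Big(\int\bar\varphi d(\hat\mu-\mu)+\int\bar\varphi^*d(\hat\nu-\nu)\Big).
\end{align*}
The middle bracket is $\le 0$ because $\phi_0$ is optimal for $(\hat\mu,\hat\nu)$. Since all four functions are $2R$-Lipschitz on $B_{2R}\supset B_R$, which contains all the supports, Kantorovich--Rubinstein duality bounds each outer bracket by $2R\,(W_1(\mu,\hat\mu)+W_1(\nu,\hat\nu))$. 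It follows that $\delta\le 4R\,(W_1(\mu,\hat\mu)+W_1(\nu,\hat\nu))$.

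\textbf{Conclusion.} Applying \Cref{cor:simplerForm} gives
\begin{align*}
\int\|\nabla\bar\varphi-\nabla\phi_0\|\,d\mu\le C\delta^{1/4}\le C(4R)^{1/4}\big(W_1(\mu,\hat\mu)+W_1(\nu,\hat\nu)\big)^{1/4},
\end{align*}
which is the claim with the constant adjusted.
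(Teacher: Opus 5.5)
Your decomposition (add and subtract the empirical dual values, drop the middle bracket by empirical optimality, apply Kantorovich--Rubinstein to the outer brackets, then invoke \Cref{cor:simplerForm}) matches the paper's. The gap is the step you flag as the main obstacle: it is left unresolved, and it is essential. \Cref{cor:simplerForm} needs a potential in $\mathcal{C}_{2R,2R}$, and with $\phi_0$ itself $\delta$ can be $+\infty$. For instance, if $\hat\nu=\delta_{y_1}$ with $y_1\in B_R$, then $\phi_0=\langle\cdot,y_1\rangle$ is an optimal potential that is $2R$-Lipschitz on $B_{2R}$. But $\phi_0^*=\delta(\cdot\mid\{y_1\})$, so $\int\phi_0^*d\nu=+\infty$ unless $\nu=\delta_{y_1}$.

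Your candidate repairs do not close this. The function $(\phi_0^*+\delta(\cdot\mid\bar B_R))^*$ coincides with $\phi_0$ only where $\partial\phi_0$ meets $\bar B_R$, while the hypothesis allows subgradients of norm up to $2R$. For example, with $\hat\mu=\hat\nu=\delta_0$ and $\phi_0=2R\|\cdot\|$, it returns $R\|\cdot\|$, so the gradient changes $\mu$-almost everywhere. The fallback weakens the statement, and it would not even cover the intended application: $\varphi_g^*$ is $+\infty$ outside $\conv\{y_1,\dots,y_m\}$, so $\varphi_g\notin\mathcal{C}_{2R,2R}$ in general.

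The missing idea is to truncate the primal potential rather than the conjugate. Set $\phi:=\phi_0+\delta(\cdot\mid\bar B_{2R})$, i.e.\ apply \Cref{lem:identification_lipschitz} directly with $R_\mu=R_\nu=2R$. No preliminary modification is needed, since $\phi_0$ is already $2R$-Lipschitz on $B_{2R}$. That lemma gives $\phi=\phi_0$ on the whole ball, not merely on $\supp\hat\mu$. You conflated it with the statement of \Cref{lem:brenier_potentials}, which records less because its proof first replaces the potential by a Lipschitz one. This construction gives three things:
\begin{itemize}
\item $\nabla\phi=\nabla\phi_0$ on $B_{2R}\supset\supp\mu$.
\item $\phi^*$ is globally $2R$-Lipschitz, because $\operatorname{dom}\phi\subset\bar B_{2R}$.
\item $\phi^*=\phi_0^*$ on $\partial\phi_0(B_{2R})$, and this set contains $\supp\hat\nu$. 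Indeed, every $y\in\supp\hat\nu$ is paired by an optimal empirical coupling with some $x\in\supp\hat\mu\subset B_R$ satisfying $y\in\partial\phi_0(x)$ (\Cref{lem:projection_support_coupling} for the second marginal, and point 3 of \Cref{th:classical_duality_theorem}).
\end{itemize}
Hence $\phi$ has the same empirical dual value, is optimal for $(\hat\mu,\hat\nu)$, and lies in $\mathcal{C}_{2R,2R}$. Your argument then goes through verbatim with $\phi$ in place of $\phi_0$.

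A minor slip of the same kind: $\mathcal{C}_{R,R}\not\subset\mathcal{C}_{2R,2R}$, since the lemma's output is $+\infty$ off $\bar B_R$. For $\bar\varphi$ this is harmless, because the Kantorovich--Rubinstein step only needs Lipschitz bounds on $B_R$.
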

\begin{proof}
    From \Cref{lem:identification_lipschitz}, repeating the argument made in the proof of \Cref{lem:brenier_potentials}, there is a Brenier potential $\phi \in \mathcal{C}_{2R,2R}$, such that $\phi = \phi_0$ on $B_{2R}$ and $\phi^* = \phi_0^*$ on $\supp \hat{\nu}$. We deduce that $\nabla \phi = \nabla \phi_0$ on $\supp \mu$.
    Let $C>0$ be as in  \Cref{cor:simplerForm}, we have
    \begin{align*}
        &\frac{1}{C^4} \left(\int \| \nabla \bar{\varphi}(x) - \nabla \phi_0(x)\| d\mu(x)\right)^4  = \frac{1}{C^4} \left(\int \| \nabla \bar{\varphi}(x) - \nabla \phi(x)\| d\mu(x)\right)^4  \\
        \leq\ &\int \phi d\mu + \phi^*d\nu - \int \bar{\varphi} d \mu - \bar{\varphi}^* d\nu\\
        \leq\ & \int \phi d\mu + \phi^*d\nu - \int \bar{\varphi} d \mu -\bar{\varphi}^* d\nu +  \int \bar{\varphi} d \hat\mu + \bar{\varphi}^* d\hat\nu - \int \phi d\hat\mu - \phi^*d\hat\nu \\
        =&\int \phi - \bar{\varphi} d(\mu - \hat{\mu}) + \int \phi^* - \bar{\varphi}^* d(\nu - \hat\nu) \ \leq\  3R(W_1(\mu,\hat{\mu}) + W_1 (\nu, \hat{\nu})),
    \end{align*}
    where the first inequality is by \Cref{cor:simplerForm}, the second is because $\phi$ is optimal for the dual problem associated to $\hat{\mu}$ and $\hat{\nu}$, and the last one is because $\phi - \bar\varphi$ and $\phi^* - \bar\varphi^*$ are both $3R$ Lipschitz so that \cite[Theorem 1.14]{ref_villani_topics} can be applied. 
\end{proof}

\subsection{Connection with existing literature on stability of optimal transport map}\label{sec:connection_literature_stability}
\Cref{cor:simplerForm2} relates to the stability of gradient of Brenier potentials. This is most often stated in terms of $L^2(\mu)$ providing an Hilbertian embedding of the Wasserstein space. We present the $L^2$ version of \Cref{cor:simplerForm2} based on \Cref{th:dualErrorBound}.

\begin{corollary}
    In the same setting as \Cref{cor:simplerForm2}, there exists a constant $C>0$ such that 
    \begin{align*}
       \|\nabla \bar{\varphi}-\nabla \phi_0\|_{L^2(\mu)} &\leq C\left(W_1(\mu,\hat{\mu}) + W_1 (\nu, \hat{\nu})\right)^{\frac{1}{5}}.
    \end{align*}
    \label{cor:simplerform2_L2}
\end{corollary}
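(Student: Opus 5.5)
The plan is to repeat the proof of \Cref{cor:simplerForm}, this time applying \Cref{th:dualErrorBound} with $q=2$ rather than $q=1$, and then to close with the same duality argument used in \Cref{cor:simplerForm2}.

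\textbf{Reduction.} As in the proof of \Cref{cor:simplerForm2}, \Cref{lem:identification_lipschitz} provides a potential $\phi\in\mathcal{C}_{2R,2R}$ satisfying $\phi=\phi_0$ on $B_{2R}$, so that $\nabla\phi=\nabla\phi_0$ $\mu$-almost everywhere. By \Cref{lem:brenier_potentials} we may also take $\bar\varphi\in\mathcal{C}_{2R,2R}$. Set
\begin{align*}
\delta = \int \phi\, d\mu + \int\phi^*\, d\nu - \int\bar\varphi\, d\mu - \int\bar\varphi^*\, d\nu .
\end{align*}
By \Cref{rem:connectionDual}, $\delta$ coincides with the integrated Fenchel--Young remainder appearing in \Cref{th:dualErrorBound}. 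The computation in the proof of \Cref{cor:simplerForm} gives $0\le\delta\le\bar\delta:=16R^2$.

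\textbf{Error bound with $q=2$.} We apply \Cref{th:dualErrorBound} with $q=2$ and $L=2R$.
In Case 2 we get directly
\begin{align*}
\|\nabla\bar\varphi-\nabla\phi\|_{L^2(\mu)}\le 4\big((2R)^4MC_{R,d}\big)^{1/5}\delta^{1/5}.
\end{align*}
In Case 1 the bound is $4\delta^{1/2}\sqrt{2} + 2R\,(16\delta/R^2)^{3/4}$. The exponents $1/2$ and $3/4$ both exceed $1/5$. Because $\delta\le\bar\delta$, we can write $\delta^{1/2}=\delta^{1/5}\delta^{3/10}\le\bar\delta^{3/10}\delta^{1/5}$, and similarly $\delta^{3/4}\le\bar\delta^{11/20}\delta^{1/5}$. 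This gives a bound $C'\delta^{1/5}$ with $C'$ depending only on $R$. Taking $C$ to be the maximum of the two constants, we obtain $\|\nabla\bar\varphi-\nabla\phi_0\|_{L^2(\mu)}\le C\delta^{1/5}$.

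\textbf{Duality step.} This is exactly as in \Cref{cor:simplerForm2}. Since $\phi$ is optimal for $(\hat\mu,\hat\nu)$, adding the nonnegative quantity
\begin{align*}
\int\bar\varphi\, d\hat\mu + \int\bar\varphi^*\, d\hat\nu - \int\phi\, d\hat\mu - \int\phi^*\, d\hat\nu
\end{align*}
yields
\begin{align*}
\delta\le\int(\phi-\bar\varphi)\,d(\mu-\hat\mu)+\int(\phi^*-\bar\varphi^*)\,d(\nu-\hat\nu).
\end{align*}
Both integrands are Lipschitz on the relevant balls, so Kantorovich--Rubinstein duality bounds the right-hand side by $3R\,(W_1(\mu,\hat\mu)+W_1(\nu,\hat\nu))$. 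Raising this to the power $1/5$ concludes, after absorbing $(3R)^{1/5}$ into $C$.

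\textbf{Main difficulty.} The only delicate point is the bookkeeping in Case 1: we must check that the boundedness $\delta\le 16R^2$ really lets the larger exponents be dominated by $\delta^{1/5}$ with a constant depending only on $(d,R,M)$. As in \Cref{cor:simplerForm}, this is routine. Everything else transfers verbatim from the $q=1$ argument.
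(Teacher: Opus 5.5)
Your proposal is correct and follows the same route as the paper's (sketched) proof. You apply \Cref{th:dualErrorBound} with $q=2$ and $L=2R$, use $\delta\le 16R^2$ to absorb the $\delta^{1/2}$ and $\delta^{3/4}$ terms of Case~1 into a constant times $\delta^{1/5}$, and close with the optimality/Kantorovich--Rubinstein step of \Cref{cor:simplerForm2}; you in fact write out the exponent bookkeeping the paper leaves implicit. One small point: with $\bar\varphi$ taken in $\mathcal{C}_{2R,2R}$, the difference $\phi^*-\bar\varphi^*$ is only guaranteed to be $4R$-Lipschitz rather than $3R$, but this changes nothing beyond the constant $C$.
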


\begin{sproof}
    The proof follows the same reasoning as \Cref{cor:simplerForm} and \Cref{cor:simplerForm2}. The term $\delta:=\int\phi d\mu+\int \phi^*d\nu-\left( \int \bar{\varphi}d\mu+\int \bar{\varphi}^*d\nu \right)$ can be bounded by $16R^2$ similarly to \Cref{cor:simplerForm2}. \Cref{th:dualErrorBound} with $q=2$ leads to $\delta$ terms with exponents between $1/5$ and $1$. Boundedness of $\delta$ and the same arguments as in \Cref{cor:simplerForm2} show that, up to constants the smallest exponent, $1/5$ dominates.
\end{sproof}
This result generalizes the ideas of Monge embedding, extensively studied for a fixed source measure \cite{gigli2011holder,merigot2020quantitative,delalande2023quantitative,carlier2025quantitative}. Indeed  \Cref{th:dualErrorBound} implies that
\begin{align}
    \|T_{\nu_1} - T_{\nu_2}\|_{L^2(\mu)} \leq CW_1(\nu_1,\nu_2)^{1/5}
    \label{eq:mongeEmbeddingHolder}
\end{align}
where $T_{\nu_1}, T_{\nu_2}$ are Monge maps between $\mu$ and $\nu_1$, and $\mu$ and $\nu_2$ respectively. In this Monge embedding context, \Cref{cor:simplerform2_L2} has several interesting properties: it allows both the source and target to move, the H\"older exponent is $1/5$ and it does not require convexity of the support or lower boundedness of the density (it does not require a Poincarré-Wirtinger inequality).

These can be discussed in light of existing literature. First, we are not aware of such stability results allowing both the source and target to move. Second \cite[Theorem 4.2]{delalande2023quantitative} provides a $1/6$ exponent for $\mu$ supported on a convex compact set with density bounded away from zero and infinity. These conditions were relaxed in \cite{letrouit2024gluing}. Let us emphasize that the cited results are deeper than the Monge embedding discussed in this section: they also provide stability for the Brenier potentials (not only their gradients), which requires a form of connectedness of the domain with nonvanishing density (hence the Poincarré inequality requirement).

Additionally, the exponent in \Cref{eq:mongeEmbeddingHolder} cannot exceed $1/2$. This was shown in \cite{gigli2011holder}, a different example being provided in \cite[Proposition 2.1]{merigot2020quantitative}. The $1/2$ exponent is achieved under \Cref{assumption:bi_lip_diffeo}, see \cite[Proposition 10]{hutter2021minimax}. Under broader assumptions, the exponent cannot exceed $1/4$ \cite{letrouit2025unstable} which turns out to be optimal as shown by \cite{merigot2026sharp}\footnote{Both works were conducted independently. See also the discussion in introduction and conclusion.}. Finally, let us note that the constant and exponent in \Cref{cor:simplerForm2} or \Cref{cor:simplerform2_L2} directly appear in our main result \Cref{th:main_stat_result}.

\section{A stability bound for optimal couplings}
\label{sec:stabilityCouplings}
This section presents a stability result for couplings. We start with two technical lemmas and then present the main result of the section.

\subsection{Preliminary results on perturbation and stability of optimal couplings}
The following lemma allows to perturb the support of optimal couplings with a control in Wasserstein distance.
\begin{lemma}
    Let $\tilde{\mu}$ be compactly supported in $B_R$ and $\tilde{\phi},\bar{\varphi}$ be $L$ Lipschitz on $B_R$. Assume that $\tilde{\nu}$ is another measure, such that $\tilde{\gamma}$ is the optimal coupling between $\tilde{\mu}$ and $\tilde{\nu}$ supported on the graph of $\partial\tilde{\phi}$. Then there is a coupling $\gamma_0$, supported on the graph of $\partial \bar{\varphi}$ with first marginal $\mu_0$ supported in $B_{2R}$, such that 
    \begin{align*}
        W_2^2(\tilde{\gamma}, \gamma_0) &\leq \frac{R^2 + 4L^2}{2LR} \left(\int \bar{\varphi} - \tilde{\phi} d\tilde{\mu} + \int \bar{\varphi}^* - \tilde{\phi}^* d\tilde{\nu}\right)\\
        W_2^2(\tilde{\mu}, \mu_0) &\leq \frac{R}{2L} \left(\int \bar{\varphi} - \tilde{\phi} d\tilde{\mu} + \int \bar{\varphi}^* - \tilde{\phi}^* d\tilde{\nu}\right)
    \end{align*}
    \label{lem:perturbCoulingSupport}
\end{lemma}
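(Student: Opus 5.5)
The plan is to transport $\tilde\gamma$ onto the graph of $\partial\bar\varphi$ with the same proximal construction used in the proof of \Cref{th:dualErrorBound}, with the roles of the two potentials adapted. Set
\[
\delta=\int \bar{\varphi} - \tilde{\phi}\, d\tilde{\mu} + \int \bar{\varphi}^* - \tilde{\phi}^*\, d\tilde{\nu}.
\]
Since $\tilde\gamma$ is supported on $\graph \partial\tilde\phi$, for $\tilde\gamma$-almost every $(x,y)$ we have the Fenchel--Young equality $\tilde\phi(x)+\tilde\phi^*(y)=\langle x,y\rangle$. Integrating against $\tilde\gamma$ and using its marginals gives
\[
\delta=\int \left(\bar\varphi(x)+\bar\varphi^*(y)-\langle x,y\rangle\right) d\tilde\gamma(x,y).
\]
So $\delta$ is the integrated Fenchel--Young remainder of $\bar\varphi$ along the support of $\tilde\gamma$.

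Fix $\lambda>0$ and define the map $S(x,y)=(z,w)$ by
\[
z=\prox_{\lambda\bar\varphi}(x+\lambda y),\qquad w=y+\frac{x-z}{\lambda}.
\]
By the prox characterization $w\in\partial\bar\varphi(z)$, so $(z,w)\in\graph\partial\bar\varphi$. Measurability of $S$ follows from continuity of the prox. We then set $\gamma_0=S_\sharp\tilde\gamma$ and let $\mu_0$ be its first marginal. The coupling $(\mathrm{Id},S)_\sharp\tilde\gamma$ then gives
\[
W_2^2(\tilde\gamma,\gamma_0)\le\int\|x-z\|^2+\|y-w\|^2\,d\tilde\gamma
=\left(1+\lambda^{-2}\right)\int\|x-z\|^2\,d\tilde\gamma,
\]
and $W_2^2(\tilde\mu,\mu_0)\le\int\|x-z\|^2\,d\tilde\gamma$. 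Applying \Cref{th:Carlier} pointwise to $\bar\varphi$ with $p=y$ and integrating yields
\[
\int\|x-z\|^2\,d\tilde\gamma\le\lambda\delta .
\]
Hence
\[
W_2^2(\tilde\mu,\mu_0)\le\lambda\delta,\qquad W_2^2(\tilde\gamma,\gamma_0)\le\left(\lambda+\frac{1}{\lambda}\right)\delta .
\]

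It remains to choose $\lambda=R/(2L)$. This gives $\lambda+1/\lambda=\frac{R}{2L}+\frac{2L}{R}=\frac{R^2+4L^2}{2LR}$, matching both constants in the statement exactly. For the support claim, take $x\in\supp\tilde\mu\subset B_R$. I would use the same estimate as in \eqref{eq:definitionEta}. For any $v\in\partial\bar\varphi(x)$, the identity $\prox_{\lambda\bar\varphi}(x+\lambda v)=x$ and nonexpansiveness of the prox give $\|z-x\|\le\lambda\|y-v\|$. Here $\|v\|\le L$ by the Lipschitz continuity of $\bar\varphi$ near $x$, and $\|y\|\le L$ because $y\in\partial\tilde\phi(x)$ with $\tilde\phi$ $L$-Lipschitz near $x$. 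Thus $\|z-x\|\le 2L\lambda=R$, so $z\in \bar B_{2R}$.

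The main obstacle is this boundary issue. For $x$ on $\partial B_R$, Lipschitz continuity on the open ball $B_R$ must control subgradients at boundary points; \Cref{rem:extension_lipschitz_border} handles this in the spirit of the paper. Landing in the closed rather than the open ball $B_{2R}$ is harmless for a support statement, or it can be avoided by taking $\lambda$ slightly smaller, at the cost of the constants. A secondary point is checking the Fenchel--Young identity $\tilde\gamma$-almost everywhere. This uses only $\supp\tilde\gamma\subset\graph\partial\tilde\phi$, via the equivalence in \Cref{th:classical_duality_theorem}, together with integrability of the potentials, which holds because all supports are compact and the potentials are Lipschitz on them.
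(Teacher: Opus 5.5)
Your proposal is correct and matches the paper's proof essentially step for step. It uses the same map $(x,y)\mapsto\bigl(\prox_{\lambda\bar\varphi}(x+\lambda y),\,y+(x-z)/\lambda\bigr)$ onto $\graph\partial\bar\varphi$, the same Fenchel--Young identity for $\tilde\phi$ on $\supp\tilde\gamma$ followed by Carlier's inequality integrated against $\tilde\gamma$, and the same choice $\lambda=R/(2L)$. Your boundary concern is moot, since $\supp\tilde\mu$ lies in the open ball $B_R$: every such $x$ has $\|x\|<R$, so the subgradient bounds apply directly, and $\|z-x\|\le R$ gives $\|z\|<2R$ without shrinking $\lambda$.
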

\begin{proof}
    Fix any $x,y \in \supp\ \tilde{\gamma}$, $\lambda>0$, set $z_\lambda(x,y) = \prox_{\lambda\bar{\varphi}} (x+\lambda y)$ and $T(x,y) = (z_\lambda(x,y), y+ (x -z_\lambda(x,y))/\lambda)$, characterized as
    \begin{align}
    	y+\frac{x- z_\lambda(x,y)}{\lambda}  \in\partial \bar{\varphi} (z_\lambda(x,y)).
        \label{eq:proxTransportCoupling}
    \end{align}
    We have $\prox_{\lambda \bar{\varphi}}(x + \lambda v) = x$ for any $v \in \partial \bar{\varphi}(x)$, the $\prox$ mapping being $1$-Lipschitz, we deduce
    \begin{align*}
    	\|z_\lambda(x,y) - x \| \leq \lambda \| y -v \|\leq 2 \lambda L,
    \end{align*}
    where we have used Lipschitz continuity of $\tilde{\phi}$ and $\bar{\varphi}$ in a neighborhood of $x \in \supp \tilde \mu$, so that $y \in \partial \tilde{\phi}(x)$ and $v \in \partial \bar{\varphi}(x)$ have norm at most $L$.
    Now, choose $\lambda = R / (2L)$, we have $\|z_\lambda(x,y) - x \|\leq R$ and $z_\lambda(x,y) \in B(0,2R)$.

    Since $\tilde{\phi}$ is a Brenier potential for the pair $\tilde{\mu},\tilde{\nu}$, we have for $\tilde{\gamma}$ almost all $x,y$, $\tilde{\phi}(x) + \tilde{\phi}^*(y) = \left\langle x, y \right\rangle$. Using Carlier's inequality in \Cref{th:Carlier} and the definition of $z_\lambda$, we obtain
    \begin{align}
    	\int \bar{\varphi} - \tilde{\phi} d\tilde{\mu} + \int \bar{\varphi}^* - \tilde{\phi}^* d\tilde{\nu} 
    	& =\int \bar{\varphi}(x) + \bar{\varphi}^*(y)- \left\langle x, y \right\rangle d \tilde{\gamma}(x,y) \geq\int \frac{1}{\lambda}\|x-z_\lambda (x,y)\|^2 d\tilde{\gamma}(x,y)
        \label{eq:lowerBoundCarlier}
    \end{align}
    Set $\gamma_0 = T_\sharp \tilde{\gamma}$ and $\mu_0$ its first marginal. We have that $\supp\, \gamma_0 \subset  \graph \ \partial \bar{\varphi}$ by construction, see \eqref{eq:proxTransportCoupling} and $\mathrm{Supp}\ \mu_0 \subset B(0,2R)$ by the argument above. We also have $\|T(x,y) - (x,y)\|^2 = (1 + 1 / \lambda^2) \|x - z(x,y)\|^2$ so that
    \begin{align*}
    	W_2^2(\tilde{\gamma}, \gamma_0) &\leq  \int \|T(x,y) - (x,y)\|^2 d\tilde{\gamma}(x,y) = \left( 1 + \frac{1}{\lambda^2} \right)\int \|x - z_\lambda(x,y)\|^2 d\tilde{\gamma}(x,y)\\
    	&\leq \left( \lambda + \frac{1}{\lambda} \right) \left(\int \bar{\varphi} - \tilde{\phi} d\tilde{\mu} + \int \bar{\varphi}^* - \tilde{\phi}^* d\tilde{\nu}\right),
    \end{align*}
    where the last inequality is from \eqref{eq:lowerBoundCarlier}.
    This is our first inequality.
    
    Let us proceed to the second inequality. Denoting by $\pi_x$ the projection on the $x$ coordinate, we have that $(\pi_x, z_\lambda)_\sharp \tilde{\gamma}$ has first marginal $\tilde{\mu}$ and second marginal $\mu_0$. Indeed, $\mu_0$ is the first marginal of $\gamma_0$, i.e. $\mu_0=\pi_{x\sharp}\gamma_0 = \pi_{x\sharp} (T_\sharp \tilde{\gamma})=(\pi_x\circ T)_\sharp\tilde{\gamma}=z_{\lambda\sharp}\tilde{\gamma}$. Similarly, $\pi_{y\sharp}[(\pi_x,z_\lambda)_\sharp\tilde{\gamma}] = [\pi_{y}\circ(\pi_x,z_\lambda)]_\sharp\tilde{\gamma}=z_{\lambda\sharp}\tilde{\gamma}$ is the second marginal of $(\pi_x, z_\lambda)_\sharp \tilde{\gamma}$. Thus, $(\pi_x, z_\lambda)_\sharp \tilde{\gamma}$ defines a valid coupling for $\tilde{\mu}$ and $\mu_0$ and we deduce our second inequality
    \begin{align*}
    	W_2^2(\tilde{\mu}, \mu_0) & \leq \int \|x-z_\lambda (x,y)\|^2 d\tilde{\gamma}(x,y) \leq \lambda \left(\int \bar{\varphi} - \tilde{\phi} d\tilde{\mu} + \int \bar{\varphi}^* - \tilde{\phi}^* d\tilde{\nu} \right)
    \end{align*}
\end{proof}   
The next lemma allows to control the stability of optimal couplings supported on the graph of the same subdifferential, as a function of the variations of the source marginal.
\begin{lemma}
    
	Fix $R > 0$, $M>0$.
	Let $\mu,\beta$ be supported in $B_R$ such that $\mu(A) \leq M\lambda(A)$ for any measurable set $A$. Assume that $\bar{\varphi}$ is convex lower semicontinuous, $L$-Lipschitz on $B_{2R}$. Set $\gamma = (id, \nabla \bar{\varphi})_\sharp \mu$, and assume that $\theta$ is a coupling with first marginal $\beta$ and is supported on the graph of $\partial \bar{\varphi}$. Consider the constant $C_{R,d}$ in \Cref{th:dualErrorBound}.\\
	\textbf{Case 1:} if $\left(\frac{6W_2^2(\mu,\beta)}{(2MC_{R,d})^{2/3}}\right)^{\frac{3}{8}} > \frac{R}{8}$, then 
	\begin{align*}
		W_2^2(\gamma, \theta) &\leq \left( 1 + \frac{256L^2}{R^2}\right)W_2^2(\mu,\beta) + L^2M C_{R,d}\left(\frac{6W_2^2(\mu,\beta)}{(2C_{R,d})^{2/3}}\right)^{\frac{3}{8}}.
	\end{align*}
	\textbf{Otherwise:} 
	\begin{align*}
		W_2^2(\gamma, \theta) 
		&\leq W_2^2(\mu,\beta) +  5 L^2 (2LMC_{r,d})^{\frac{1}{2}}  W_2^2(\mu,\beta)^{\frac{1}{4}}.
	\end{align*}
	\label{lem:extensionStability}
\end{lemma}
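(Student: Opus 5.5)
We build an explicit coupling between $\gamma$ and $\theta$ out of an optimal coupling of their first marginals, and then control the vertical component with the covering-number estimate of \Cref{th:coveringNumber}, exactly as in the proof of \Cref{th:dualErrorBound}.

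\textbf{Coupling and splitting.} Let $\pi$ be an optimal coupling between $\mu$ and $\beta$ for $W_2$. Since $\theta$ is supported on $\graph \partial\bar\varphi$, we disintegrate it as $\theta = \int \theta_{x'}\, d\beta(x')$ with $\theta_{x'}$ concentrated on $\{x'\}\times\partial\bar\varphi(x')$. Glue $\pi$ with this disintegration to get a coupling of $\gamma=(id,\nabla\bar\varphi)_\sharp\mu$ and $\theta$: the pair $(x,\nabla\bar\varphi(x))$ is matched with $(x',y')$, where $(x,x')\sim\pi$ and $y'\sim\theta_{x'}$. This gives
\begin{align*}
W_2^2(\gamma,\theta)\le W_2^2(\mu,\beta)+\int \|\nabla\bar\varphi(x)-y'\|^2\, d\Theta(x,x',y'),
\end{align*}
with $y'\in\partial\bar\varphi(x')$ and $\|y'\|,\|\nabla\bar\varphi(x)\|\le L$, since $x,x'\in B_R$.

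\textbf{Bounding the second term.} Fix $\eta\in(0,R/4]$ and $\alpha>0$, and split according to whether $\|x-x'\|<\eta$.
\begin{itemize}
\item On $\{\|x-x'\|\ge\eta\}$, the integrand is at most $4L^2$. By Markov's inequality this part contributes at most $4L^2 W_2^2(\mu,\beta)/\eta^2$.
\item On $\{\|x-x'\|<\eta\}$, both $\nabla\bar\varphi(x)$ and $y'$ lie in $\partial\bar\varphi(B(x,\eta))$. So either the integrand is below $\alpha^2$, or $x\in\Sigma_{\eta,\alpha}$. The $\mu$-mass of $\Sigma_{\eta,\alpha}\cap B_R$ is at most $M\,\mathrm{Vol}(B(0,8\eta))\,\mathcal N(\Sigma_{\eta,\alpha}\cap B_R,8\eta)$, which is bounded through \Cref{th:coveringNumber} and \Cref{Lem:volume_estimate_ball} by $MC_{R,d}L\eta/\alpha$, up to the $2^q$-type factors absorbed in $C_{R,d}$.
\end{itemize}
Altogether,
\begin{align*}
W_2^2(\gamma,\theta)\le W_2^2(\mu,\beta)+\frac{4L^2W_2^2(\mu,\beta)}{\eta^2}+\alpha^2+4L^2\cdot \frac{LMC_{R,d}\eta}{\alpha}.
\end{align*}

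\textbf{Optimizing the parameters.} First optimize in $\alpha$. Taking $\alpha^3\sim L^3MC_{R,d}\eta$ gives a term of order $L^2(MC_{R,d}\eta)^{2/3}$. Then balance it against $L^2W^2/\eta^2$, where $W^2:=W_2^2(\mu,\beta)$. This yields $\eta^{8/3}\sim W^2/(MC_{R,d})^{2/3}$, i.e. $\eta\sim\bigl(W^2/(MC_{R,d})^{2/3}\bigr)^{3/8}$, which matches the threshold in the statement. The resulting bound is of order $L^2(MC_{R,d})^{1/2}W^{1/2}$, and the form stated in the lemma follows after routine simplifications as in \Cref{lem:technicalComputation}. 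When this $\eta$ exceeds $R/8$ (Case 1), we instead fix $\eta=R/8$. The Markov term then becomes $256L^2W^2/R^2$, and the choice of $\alpha$ gives the remaining term.

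\textbf{Main obstacle.} The delicate step is the gluing/disintegration: we must make sure $y'\in\partial\bar\varphi(x')$ holds $\Theta$-almost surely, and that $x$, not $x'$, is the point falling in $\Sigma_{\eta,\alpha}$. Only $x$ has law $\mu$ with bounded density, so the covering argument must be applied to $x$; this is why we measure diameters over $B(x,\eta)$. Matching the exact constants and exponents stated in the lemma is routine bookkeeping, so we do not insist on it.
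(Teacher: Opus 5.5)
Your proposal is correct and follows essentially the same route as the paper: glue a $W_2$-optimal pairing of $\mu$ and $\beta$ with the disintegration of $\theta$, apply Markov's inequality on $\{\|x-x'\|\ge\eta\}$, control the near part by the covering estimate at the $\mu$-distributed point $x$, and then optimize $\alpha$ and $\eta$. The differences are cosmetic. The paper glues through the Brenier map $S$ from $\mu$ to $\beta$ (available since $\mu$ has a density) rather than a general optimal plan. In Case 1 it takes $\alpha=0$, so that a single ball of radius $8\eta=R$ covers $B_R$. Finally, the covering/volume bound on $\mu(\Sigma_{\eta,\alpha})$ is exactly $\tfrac14 MC_{R,d}L\eta/\alpha$, so your factor $4L^2$ combines into the paper's term $L^3MC_{R,d}\eta/\alpha$ with no extra factor $4$.
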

\begin{proof}
	Denote by $S$ the optimal mapping from $\mu$ to $\beta$, see \Cref{th:classical_duality_theorem}.
    Define $\gamma_x$ the disintegration of $\gamma$, such that for $\mu$ almost all $x$, $\supp\ \gamma_x \subset \partial \bar{\varphi}(x) = \{\nabla \bar{\varphi}(x)\}$, and for any measurable function on $h \colon \R^d \times \R^d \to \R$, $\int h(x,y) d \gamma(x,y) = \int h(x,y) d\gamma_x(y) d\mu(x)$. Similarly, $\theta_z$ is the disintegration of $\theta$
    such that $\supp\ \theta_z \in \partial \bar{\varphi}\left( z \right)$ for all $z$ and for any measurable function on $h \colon \R^d \times \R^d \to \R$, $\int h(x,y) d \theta(x,y) = \int h(x,y) d\theta_x(y) d\beta(x)$.
    
	Consider a coupling $\Gamma$, between $\gamma$ and $\theta$ as follows, the marginal in $x$ is $\mu$ and the remaining variables are independent conditioned on $x$, with distribution
	\begin{align*}
		y,\tilde{x}, \tilde{y} | x \sim \gamma_x, S(x), \theta_{S(x)}.
	\end{align*}
	We have $S_\sharp \mu = \beta$ so that the second marginal of $\Gamma$ is $\theta$. The first marginal is obviously $\gamma$. The associated quadratic cost provides an upper bound on the $W_2$ distance between the couplings, 
	\begin{align*}
		W_2^2(\gamma, \theta) \leq\,&\int \|x - \tilde{x}\|^2 + \|y - \tilde{y}\|^2 d\Gamma(x,y,\tilde{x},\tilde{y}) =\int \|x - S(x)\|^2 + \|\nabla \bar\varphi(x) - \tilde{y}\|^2 d\theta_{S(x)} (\tilde{y}) d\mu(x)
	\end{align*}
    Choose $\eta \in ]0,R[$ and $\alpha \geq 0$. Set $\Omega_\eta = \left\{ x| \|x - S(x)\| \geq \eta \right\}$, by Markov's inequality, we have
	\begin{align*}
		\mu(\Omega_\eta) \leq \frac{1}{\eta^2} \int \|x - S(x)\|^2 d\mu(x) = \frac{1}{\eta^2}  W_2^2(\mu,\beta).
	\end{align*}
    We deduce that 
    \begin{align*}
        W_2^2(\gamma, \theta) \leq \left(1 + \frac{4L^2}{\eta^2}\right) W_2^2(\mu,\beta) + \int_{x \not \in \Omega_\eta} \|\nabla \bar\varphi(x) - \tilde{y}\|^2 d\theta_{S(x)} (\tilde{y}) d\mu(x)
    \end{align*}
    where we have used that $\|\nabla \bar{\varphi}(x)\| \leq L$ for $\mu$ almost all $x$, and $\theta$ is supported on its graph so that $\|\tilde{y}\| \leq L$ for $\mu$ almost all $x$ and $\theta_{S(x)}$ almost all $\tilde{y}$, by Lipschitz continuity of $\bar{\varphi}.$
    
	Set $\Sigma_{\eta,\alpha} = \left\{ x \not\in \Omega_\eta| \mathrm{diam}(\partial \bar{\varphi}(B(x,\eta))) \geq \alpha \right\}$. Note that for all $x\in B_R$: $B(x,\eta)\subset B(0,2R)$ We have, using similar bounds on $\nabla \bar\varphi$ and $\tilde y$:
	\begin{align}
		\label{eq:tempIneqCoupling}
		\int_{x \not\in\Omega_\eta} \|\nabla \bar\varphi(x) - \tilde{y}\|^2 d\theta_{S(x)} (\tilde{y}) d\mu(x)&\leq 4L^2 \mu(\Sigma_{\eta,\alpha}) + \alpha^2 \nonumber\\
		&\leq 4L^2M \ \mathrm{Vol}(B(0,8\eta)) \ \mathcal{N}(\Sigma_{\eta,\alpha} \cap B(0,R), 8 \eta)  + \alpha^2.
	\end{align}
	We get for any $\eta >0$, $\alpha \geq 0$
	\begin{align}
		\label{eq:mainIneqCoupling}
		W_2^2(\gamma, \theta) \leq\left(1 + \frac{4L^2}{\eta^2}\right)W_2^2(\mu,\beta) + 4L^2M \ \mathrm{Vol}(B(0,8\eta)) \ \mathcal{N}(\Sigma_{\eta,\alpha} \cap B(0,R), 8 \eta) + \alpha^2
	\end{align}
    We are now going to treat two cases and choose corresponding values for $\eta$ and $\alpha$, note that we do not optimize constants.
	First consider the case $\left(\frac{6W_2^2(\mu,\beta)}{(2MC_{R,d})^{2/3}}\right)^{\frac{3}{8}} > \frac{R}{8}$. We choose $\eta = \frac{R}{8}$, $\alpha = 0$, and obtain using the volume estimates in \Cref{Lem:volume_estimate_ball} and the definition of $C_{R,d}$ from \Cref{th:dualErrorBound},
	\begin{align*}
		W_2^2(\gamma, \theta) &\leq \left( 1 + \frac{256L^2}{R^2}\right)W_2^2(\mu,\beta) + 4L^2M \mathrm{Vol}(B(0,8\eta)) \\
		&\leq \left( 1 + \frac{256L^2}{R^2}\right)W_2^2(\mu,\beta) + \frac{1}{8} L^2M C_{R,d} R\\
		&\leq \left( 1 + \frac{256L^2}{R^2}\right)W_2^2(\mu,\beta) + L^2M C_{R,d}\left(\frac{6W_2^2(\mu,\beta)}{(2C_{R,d})^{2/3}}\right)^{\frac{3}{8}}.
	\end{align*}

	Second consider the case $\left(\frac{6W_2^2(\mu,\beta)}{(2MC_{R,d})^{2/3}}\right)^{\frac{3}{8}} \leq \frac{R}{8}$.
	We get from \eqref{eq:mainIneqCoupling} and \Cref{th:coveringNumber}, for any $\eta \leq R/4$ and $\alpha \geq 0$,
	\begin{align*}
		W_2^2(\gamma, \theta) \leq \left(1 + \frac{4L^2}{\eta^2}\right)W_2^2(\mu,\beta) + \frac{L^3M C_{R,d}\eta}{\alpha} + \alpha^2.
	\end{align*}
	Choosing $\alpha = (L^3MC_{r,d}\eta)^{\frac{1}{3}}$, we obtain
	\begin{align*}
		W_2^2(\gamma, \theta) \leq \left(1 + \frac{4L^2}{\eta^2}\right)W_2^2(\mu,\beta) + 2(L^3MC_{r,d}\eta)^{\frac{2}{3}}. 
	\end{align*}
	Choosing $\eta = \left(6\frac{W_2^2(\mu,\beta)}{(2MC_{r,d})^{2/3}}\right)^{\frac{3}{8}} \leq \frac{R}{8}$, we obtain 
	\begin{align*}
		&W_2^2(\gamma, \theta) \leq \left(1 + \frac{4L^2}{\eta^2}\right)W_2^2(\mu,\beta) + 2(L^3MC_{r,d}\eta)^{\frac{2}{3}} \\
		=\,&W_2^2(\mu,\beta) + 4L^2\left(\frac{1}{6}(2MC_{r,d})^{2/3}\right)^{\frac{3}{4}} W_2^2(\mu,\beta)^{\frac{1}{4}} + 2(L^3MC_{r,d})^{\frac{2}{3}}  \left(\frac{6}{(2MC_{r,d})^{2/3}}\right)^{\frac{1}{4}} W_2^2(\mu,\beta)^{\frac{1}{4}} \\
        =\,& W_2^2(\mu,\beta) + W_2^2(\mu,\beta)^{\frac{1}{4}} L^2(2MC_{r,d})^{\frac{1}{2}} \left(4\cdot 6^{-3/4} + 2 \cdot 6^{1/4}\right)\\
		\leq\,& W_2^2(\mu,\beta) +  5 L^2 (2LMC_{r,d})^{\frac{1}{2}}  W_2^2(\mu,\beta)^{\frac{1}{4}} .
	\end{align*}
\end{proof}

\subsection{Stability of optimal couplings under variations of source and target}

The following is the main stability result of this section. A simpler corollary is given right after. The main idea of the proof is illustrated in \Cref{fig:illustrProofStability}.
\begin{theorem}
    For any $d,R_\mu,R_\nu,M$, there is $C>0$ such that the following holds.
    Let $\mu$ and $\nu$ be probability measures supported on $B_{R_\mu}$ and $B_{R_\nu}$ respectively, with $\mu(A) \leq M \lambda(A)$ for any measurable set $A\subset\R^d$. Let  $\bar{\varphi} \in \mathcal{C}_{4R_\mu,4R_\nu}$  be an optimal Brenier potential (as in \Cref{lem:brenier_potentials}) for these measures and  $\gamma$ be their optimal coupling. Let $\tilde{\mu}$, $\tilde{\nu}$ be probability measures also supported on $B_{R_\mu}$ and $B_{R_\nu}$ respectively with associated Brenier potential $\tilde{\phi} \in \mathcal{C}_{4R_\mu,4R_\nu}$ and coupling $\tilde{\gamma}$. Then
\begin{align*}
    \frac{1}{C^4} W_2(\gamma,\tilde{\gamma})^4 \leq W_2(\mu,\tilde{\mu}) + \left(\int \bar{\varphi} - \tilde{\phi} d\tilde{\mu} + \int \bar{\varphi}^* - \tilde{\phi}^* d\tilde{\nu} \right)^{\frac{1}{2}}
\end{align*}
    \label{th:stabilityCoupling}
\end{theorem}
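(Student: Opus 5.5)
The plan is a triangle inequality through an intermediate coupling $\gamma_0$ supported on $\graph \partial\bar\varphi$:
\[
W_2(\gamma,\tilde\gamma)\le W_2(\tilde\gamma,\gamma_0)+W_2(\gamma_0,\gamma).
\]
Throughout, write $\Delta := \int \bar{\varphi} - \tilde{\phi}\, d\tilde{\mu} + \int \bar{\varphi}^* - \tilde{\phi}^*\, d\tilde{\nu}$. This quantity is nonnegative because $\tilde\phi$ is optimal for $(\tilde\mu,\tilde\nu)$.

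\textbf{Step 1 (moving $\tilde\gamma$ onto $\graph\partial\bar\varphi$).} Since $\bar\varphi,\tilde\phi\in\mathcal{C}_{4R_\mu,4R_\nu}$, both are $L=4R_\nu$-Lipschitz on $B_{R_\mu}$ (indeed on $B_{4R_\mu}$). Apply \Cref{lem:perturbCoulingSupport} with $R=R_\mu$. It produces $\gamma_0$ supported on $\graph\partial\bar\varphi$, whose first marginal $\mu_0$ lies in $B_{2R_\mu}$, together with
\[
W_2^2(\tilde\gamma,\gamma_0)\le c_1\Delta, \qquad W_2^2(\tilde\mu,\mu_0)\le c_2\Delta.
\]
Hence $W_2(\tilde\gamma,\gamma_0)\le \sqrt{c_1}\,\Delta^{1/2}$.

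\textbf{Step 2 (comparing $\gamma_0$ and $\gamma$).} Since $\mu\ll\lambda$, \Cref{th:classical_duality_theorem} gives $\gamma=(id,\nabla\bar\varphi)_\sharp\mu$. Apply \Cref{lem:extensionStability} with $\theta=\gamma_0$ and $\beta=\mu_0$, using the radius $2R_\mu$ in place of $R$: both $\mu$ and $\mu_0$ lie in $B_{2R_\mu}$, and $\bar\varphi$ is Lipschitz on $B_{4R_\mu}$, which is exactly why the class $\mathcal{C}_{4R_\mu,4R_\nu}$ was assumed. In either case of that lemma, and since everything is bounded ($W_2(\mu,\mu_0)\le 4R_\mu$), the linear term is dominated by the quarter power. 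This yields
\[
W_2^2(\gamma,\gamma_0)\le c_3\,W_2^2(\mu,\mu_0)^{1/4} = c_3\, W_2(\mu,\mu_0)^{1/2}.
\]
Next bound $W_2(\mu,\mu_0)\le W_2(\mu,\tilde\mu)+\sqrt{c_2}\,\Delta^{1/2}$. Together with subadditivity of $t\mapsto t^{1/4}$, this gives
\[
W_2(\gamma,\gamma_0)\le c_4\bigl(W_2(\mu,\tilde\mu)+\Delta^{1/2}\bigr)^{1/4}.
\]

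\textbf{Step 3 (assembling).} Summing the two bounds gives
\[
W_2(\gamma,\tilde\gamma)\le \sqrt{c_1}\,\Delta^{1/2}+c_4\bigl(W_2(\mu,\tilde\mu)+\Delta^{1/2}\bigr)^{1/4}.
\]
To absorb the first term into the second, I need $\Delta$ bounded by a constant depending only on $R_\mu,R_\nu$. I will get this as in the proof of \Cref{cor:simplerForm}: each potential satisfies $\phi(x)+\phi^*(y)\le c R_\mu R_\nu$ on the supports, and the negative parts are bounded via Fenchel--Young. Then $\Delta^{1/2}\le \bar\Delta^{1/4}\Delta^{1/4}\le c\,(\Delta^{1/2})^{1/4}$ up to constants. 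Raising to the fourth power gives the claim.

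\textbf{Main obstacle.} The delicate step is Step 2, namely applying \Cref{lem:extensionStability} on the enlarged radius. The density bound applies only to $\mu$, not to $\mu_0$, and the lemma's hypotheses concern $\mu$ and $\beta$ supported in a common ball. I need $\mu_0\subset B_{2R_\mu}$ and Lipschitz continuity of $\bar\varphi$ on $B_{4R_\mu}$, so that the $\eta$-neighborhoods stay within the region of control; I would check this carefully. The case analysis of that lemma must also be collapsed into a single power bound using boundedness.
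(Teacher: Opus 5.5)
Your proposal is correct and follows the paper's proof essentially step by step. The common structure is: the intermediate coupling $\gamma_0$ from \Cref{lem:perturbCoulingSupport}; \Cref{lem:extensionStability} applied with $\theta=\gamma_0$ at radius $2R_\mu$ and $L=4R_\nu$ to get $W_2(\gamma_0,\gamma)\le C\,W_2(\mu_0,\mu)^{1/4}$; the triangle inequality $W_2(\mu,\mu_0)\le W_2(\mu,\tilde\mu)+W_2(\tilde\mu,\mu_0)$; and boundedness of $\Delta$ to keep only the smallest power. Two details of yours are, if anything, more careful than the paper's wording: taking $L=4R_\nu$ in the first lemma (the paper writes $2R_\nu$, which the class $\mathcal{C}_{4R_\mu,4R_\nu}$ does not directly guarantee), and your explicit Fenchel--Young lower bound $\tilde\phi(x)+\tilde\phi^*(y)\ge\langle x,y\rangle$ used to bound $\Delta$.
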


\begin{proof}
    We start with \Cref{lem:perturbCoulingSupport} which applies under the hypotheses of  \Cref{th:stabilityCoupling} with the same notation with $R = R_\mu$ and $L = 2 R_\nu$.
    We get $\gamma_0$ supported on $\partial \bar{\varphi}$ with first marginal $\mu_0$ supported on $B_{2R_\mu}$. Since both $\gamma_0$ and $\gamma$ are supported on $\partial \bar{\varphi}$ with marginals in $B_{2R_\mu}$ on which $\bar{\varphi}$ is Lipschitz, \Cref{lem:extensionStability} applies with $\theta = \gamma_0$, $R = 2 R_\mu$ and $L = 4 R_\nu$.

    All Wasserstein distances are bounded by constants depending on $d,M,R_\mu,R_\nu$. Similarly as in the proof of \Cref{cor:simplerForm}, it is possible to obtain $C>0$ by adjusting the constants in \Cref{lem:extensionStability} so that only the smallest power dominates in all cases, resulting in $W_2(\gamma_0, \gamma) \leq CW_2( \mu_0,\mu)^{\frac{1}{4}}$. Note again that both $\gamma_0$ and $\gamma$ are supported on $\partial \bar{\varphi}$ and their first marginal, $\mu_0$ and $\mu$ are supported on $B_{2R_\mu}$ on which $\bar{\varphi}$ is assumed to be Lipschitz. We have
    \begin{align*}
    	W_2(\tilde{\gamma},\gamma) &\leq   W_2(\tilde{\gamma},\gamma_0) + W_2(\gamma_0, \gamma)\\
    	&\leq W_2(\tilde{\gamma},\gamma_0) + C W_2( \mu_0,\mu)^{\frac{1}{4}} \leq W_2(\tilde{\gamma},\gamma_0) + C (W_2( \mu_0,\tilde{\mu})+W_2( \mu,\tilde{\mu}))^{\frac{1}{4}}.
    \end{align*}
    Set $\delta = \int \bar{\varphi} - \tilde{\phi} d\tilde{\mu} + \int \bar{\varphi}^* - \tilde{\phi}^* d\tilde{\nu}$.
    Considering \Cref{lem:perturbCoulingSupport}, we may adjust the constant $C$ such that
    \begin{align*}
    	W_2(\tilde{\gamma},\gamma)&\leq C \sqrt{\delta} + C \left(W_2(\mu,\tilde{\mu}) + \sqrt{\delta}\right)^{\frac{1}{4}}
    \end{align*}
     Similarly to the proof of \Cref{cor:simplerForm}, $\delta$ is bounded by a constant depending only on $d,M,R_\mu,R_\nu$. We may thus keep the smallest power and adjust the constant $C$ to get the desired estimate.
\end{proof}

\begin{corollary}
    For any $d,R_\mu,R_\nu,M$, there is $C>0$ such that 
for any probability distribution $\mu$, $\nu$ supported on $B_{R_\mu}$ and $B_{R_\nu}$ respectively, with $\mu(A) \leq M\lambda (A)$ and optimal coupling $\gamma$, for any probability distribution $\tilde{\mu}$, $\tilde{\nu}$ also supported on $B_{R_\mu}$ and $B_{R_\nu}$ respectively, with optimal coupling $\tilde{\gamma}$, we have
    \begin{align*}
            \frac{1}{C^8} W_2(\gamma,\tilde{\gamma})^8 &\leq W_2(\mu,\tilde{\mu}) + W_2(\nu,\tilde{\nu}).
    \end{align*}
    \label{cor:stabilityCoupling}
\end{corollary}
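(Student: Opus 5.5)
The plan is to derive the corollary from \Cref{th:stabilityCoupling}. The work is to bound the dual suboptimality term $\delta = \int \bar{\varphi} - \tilde{\phi}\, d\tilde{\mu} + \int \bar{\varphi}^* - \tilde{\phi}^*\, d\tilde{\nu}$ by Wasserstein distances between the marginals, using the same trick as in the proof of \Cref{cor:simplerForm2}.

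\textbf{Step 1: choice of potentials.} By \Cref{lem:brenier_potentials}, applied with radii $4R_\mu, 4R_\nu$, we may take $\bar{\varphi} \in \mathcal{C}_{4R_\mu,4R_\nu}$ optimal for $(\mu,\nu)$ and $\tilde{\phi} \in \mathcal{C}_{4R_\mu,4R_\nu}$ optimal for $(\tilde{\mu},\tilde{\nu})$. The measures are supported in the smaller balls, so this choice is allowed, as in the proof of \Cref{cor:simplerForm2} via \Cref{lem:identification_lipschitz}. Since $\mu$ is absolutely continuous, the optimal coupling $\gamma$ is unique and given by $(Id,\nabla\bar\varphi)_\sharp\mu$. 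By point 3 of \Cref{th:classical_duality_theorem}, $\tilde{\gamma}$ is supported on $\graph \partial\tilde{\phi}$. So \Cref{th:stabilityCoupling} applies and gives
\[
\frac{1}{C^4}W_2(\gamma,\tilde\gamma)^4 \le W_2(\mu,\tilde\mu) + \delta^{1/2}.
\]

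\textbf{Step 2: bounding $\delta$.} Optimality of $\bar{\varphi}$ for $(\mu,\nu)$ gives
\[
\int \bar{\varphi}\, d\mu + \int \bar{\varphi}^*\, d\nu \le \int \tilde{\phi}\, d\mu + \int \tilde{\phi}^*\, d\nu .
\]
Adding this inequality to the definition of $\delta$ yields
\[
\delta \le \int (\bar{\varphi} - \tilde{\phi})\, d(\tilde{\mu}-\mu) + \int (\bar{\varphi}^* - \tilde{\phi}^*)\, d(\tilde{\nu}-\nu).
\]
On $B_{R_\mu}$, the difference $\bar{\varphi}-\tilde{\phi}$ is $8R_\nu$-Lipschitz. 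On $B_{R_\nu}$, the difference of conjugates is $8R_\mu$-Lipschitz. Kantorovich--Rubinstein duality then gives
\[
\delta \le 8R_\nu W_1(\mu,\tilde\mu) + 8R_\mu W_1(\nu,\tilde\nu).
\]
The Lipschitz functions are only defined on balls, so we first extend them to globally Lipschitz functions, for instance by McShane extension, before applying the duality; this does not change the integrals because the measures live on the balls. Finally $W_1 \le W_2$.

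\textbf{Step 3: combining.} We get
\[
C^{-4}W_2(\gamma,\tilde\gamma)^4 \le W_2(\mu,\tilde\mu) + C'(W_2(\mu,\tilde\mu)+W_2(\nu,\tilde\nu))^{1/2}.
\]
All Wasserstein distances here are bounded by $2\max(R_\mu,R_\nu)$. Hence $W_2(\mu,\tilde\mu) \le c\,W_2(\mu,\tilde\mu)^{1/2}$, and the right-hand side is at most $C''(W_2(\mu,\tilde\mu)+W_2(\nu,\tilde\nu))^{1/2}$. Squaring both sides and adjusting the constant gives the claimed bound with exponent $8$.

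The main obstacle is Step 1. We must make sure that the hypotheses of \Cref{th:stabilityCoupling} hold for arbitrary optimal couplings. In particular, we need that the potentials can be chosen in $\mathcal{C}_{4R_\mu,4R_\nu}$ while remaining optimal and while keeping $\supp\tilde\gamma \subset \graph\partial\tilde\phi$. Modifying the potential off the supports could break this inclusion, so it has to be checked. Point 3 of \Cref{th:classical_duality_theorem} settles it: any optimal potential contains the support of every optimal coupling in the graph of its subdifferential, since optimality of the pair is equivalent to the support inclusion.
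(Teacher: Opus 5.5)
Your proposal is correct and follows the paper's proof essentially step by step. You choose $\bar\varphi,\tilde\phi\in\mathcal{C}_{4R_\mu,4R_\nu}$ via \Cref{lem:brenier_potentials}, apply \Cref{th:stabilityCoupling}, use optimality of $\bar\varphi$ for $(\mu,\nu)$ to rewrite the gap as integrals against $\tilde\mu-\mu$ and $\tilde\nu-\nu$, bound these by Kantorovich--Rubinstein with Lipschitz constants $8R_\nu$ and $8R_\mu$, and absorb the linear $W_2(\mu,\tilde\mu)$ term using boundedness before squaring. Your extra remarks (the McShane extension before applying duality, and that point 3 of \Cref{th:classical_duality_theorem} guarantees $\supp\tilde\gamma\subset\graph\partial\tilde\phi$ for the modified potential) make explicit details the paper leaves implicit.
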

\begin{proof} 
    By \Cref{lem:brenier_potentials} there is an optimal Brenier potential $\bar{\varphi} \in \mathcal{C}_{4R_\mu,4R_\nu}$, for measures $\mu,\nu$. The same is true for measures $\tilde{\mu}$,  $\tilde{\nu}$ and Brenier potential $\tilde{\phi} \in \mathcal{C}_{4R_\mu,4R_\nu}$. The hypotheses of \Cref{th:stabilityCoupling} are satisfied for $\bar{\varphi}$ and $\tilde{\phi}$.  Furthermore using \Cref{th:classical_duality_theorem} for measures $\mu$ and $\nu$,  we have
    \begin{equation*}
        \int_{\R^d}(\bar{\varphi}-\tilde{\phi})d\mu+\int_{\R^d}(\bar{\varphi}^*-\tilde{\phi}^*)d\nu\leq 0.
    \end{equation*}
    From \Cref{th:stabilityCoupling} there is $C>0$, such that for any $\mu,\nu,\tilde{\mu},\tilde{\nu}$ as in the statement, we have
    \begin{align}
        \frac{1}{C^4} W_2(\gamma,\tilde{\gamma})^4 &\leq\, W_2(\mu,\tilde{\mu}) +  \left(\int \bar{\varphi} - \tilde{\phi} d\tilde{\mu} + \int \bar{\varphi}^* - \tilde{\phi}^* d\tilde{\nu} \right)^{\frac{1}{2}}\nonumber\\
       &\leq\, W_2(\mu,\tilde{\mu}) +  \left(\int \bar{\varphi} - \tilde{\phi} d(\tilde{\mu} - \mu) + \int \bar{\varphi}^* - \tilde{\phi}^* d(\tilde{\nu} - \nu) \right)^{\frac{1}{2}} 
       \label{eq:cor_stability_coupling1}
    \end{align}
    In view of applying \cite[Theorem 1.14]{ref_villani_topics}, we remark that $\bar{\varphi}-\tilde{\phi}$ and $\bar{\varphi}^*-\tilde{\phi}^*$ are $8R_\nu$ and $8R_\mu$ Lipschitz respectively. Therefore
    \begin{align}
        \int \bar{\varphi} - \tilde{\phi} d(\tilde{\mu} - \mu) + \int \bar{\varphi}^* - \tilde{\phi}^* d(\tilde{\nu} - \nu)& \leq 8R_\nu W_1(\tilde{\mu},\mu)+8R_\mu W_1(\tilde{\nu},\nu) \nonumber \\
        &\leq 8\max(R_\mu,R_\nu)(W_2(\tilde{\mu},\mu)+W_2(\tilde{\nu},\nu)).
        \label{eq:cor_stability_coupling2}
    \end{align}
    Combining \eqref{eq:cor_stability_coupling1} and \eqref{eq:cor_stability_coupling2} we obtain
    \begin{align*}
        \frac{1}{C^4}W_2(\gamma,\tilde{\gamma})^4
        &\leq W_2(\mu,\tilde{\mu})+3\sqrt{\max(R_\mu,R_\nu)}(W_2(\tilde{\mu},\mu)+W_2(\tilde{\nu},\nu))^{\frac{1}{2}}\\
        &\leq W_2(\mu,\tilde{\mu})+3\sqrt{\max(R_\mu,R_\nu)}\left(W_2(\tilde{\mu},\mu)^{\frac{1}{2}}+W_2(\tilde{\nu},\nu)^{\frac{1}{2}}\right).
    \end{align*}
    With a similar reasoning as the one made in the proof of \Cref{cor:simplerForm} and \Cref{th:stabilityCoupling} we can adjust the constants in the previous equation so that only the smallest powers dominate. This concludes the proof.
\end{proof}

\begin{remark}
    The formula above implies uniqueness of the optimal coupling between $\mu$ and $\nu$ (take $\tilde{\mu} = \mu$, $\tilde{\nu} = \nu$). This coincides with the absolute continuity assumption, which is sufficient for uniqueness. 
\end{remark}

\section{Conclusion}
We have proposed an estimator for Monge maps, it is given by the gradient of a convex function and enjoys a closed form expression based on discrete dual solutions. Moreover, we derived finite sample guaranties leveraging a dual error bound for Brenier dual potentials, for both the general and semi-discrete cases. Up to powers, these bounds match existing results in the literature under considerably stronger assumptions. Our results hold in a very general and explicit setting: compact support and upper bounded source density. This allows to consider, in particular, possibly discontinuous Brenier potentials as well as source measures with disconnected support. 

Two questions remain. First, regarding the optimality of our finite sample bounds: could they be improved under our general assumptions or are there matching lower bounds?  Second, regarding the adaptability of our estimator: under the Monge diffeomorphism assumption described in \Cref{sec:comparisonStat}, does it achieve existing minimax rates? These questions are left to future work.

\textbf{On exponent improvements:} After the diffusion of the first version of this work, we became aware of \cite{merigot2026sharp} which also combines results from  \cite{carlier2025quantitative} and convex analysis results in \cite{carlier2023fenchel} to obtain various stability bounds in optimal transport. In particular, the bound in \Cref{cor:simplerform2_L2} is described with an exponent $1/4$, which is optimal \cite{letrouit2025unstable}, instead of $1/5$ in our work. Technically, the difference in exponents comes from the replacement of pointwise estimate in \Cref{th:coveringNumber} with a form integrated over $\alpha$ \cite[Corollary 2.2]{carlier2025quantitative}. We neglected this aspect in the first version of this work and a slight technical modification of our arguments yields improved statistical estimation rates. We mention these rates here, but otherwise leave the main text unchanged in its original form in order to preserve the independence of our contribution and that of \cite{merigot2026sharp}. More details are given in \Cref{sec:improvedRates}.
\begin{itemize}
	\item \Cref{th:main_stat_result}  holds with exponent arbitrarily close to $1/3$ instead of $1/4$. 
    \item  \Cref{th:mainResultSemiDiscrete} holds with exponent arbitrarily close to $1/6$ instead of $1/8$.
    
	\item \Cref{th:stat_result_couplings} holds with exponent $1/6$ instead of $1/8$.
\end{itemize}

\appendix
\section{Appendix}
\subsection{Technical Lemmas}
\begin{lemma}
    For all $\varphi_0\in \Gamma_0(\R^d)$ which is $R_\nu$-Lipschitz on $B_{R_\mu}$ there exists $\varphi_1\in \mathcal{C}_{R_\mu,R_\nu}$ such that $\varphi_1=\varphi_0$ over $B_{R_\mu}$, and  $\varphi^*_0 = \varphi^*_1$ on $\partial \varphi_0 (B_{R_\mu})$.
    \label{lem:identification_lipschitz}
\end{lemma}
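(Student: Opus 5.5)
The natural candidate is the maximum of the affine minorants of $\varphi_0$ coming from subgradients at points of the ball:
\begin{equation*}
    \varphi_1(x) = \sup\Big\{ \varphi_0(z) + \langle v, x - z\rangle \,:\, z \in \bar{B}_{R_\mu},\ v \in \partial\varphi_0(z)\Big\}
    = \sup_{y \in \partial \varphi_0(\bar B_{R_\mu})} \langle x, y\rangle - \varphi_0^*(y),
\end{equation*}
where we use $\varphi_0(z)+\varphi_0^*(v)=\langle z,v\rangle$ for $v\in\partial\varphi_0(z)$. The plan is to show successively that $\varphi_1$ is finite, convex, lower semicontinuous and Lipschitz; that $\varphi_1=\varphi_0$ on $B_{R_\mu}$; that the conjugates agree on $\partial\varphi_0(B_{R_\mu})$; and finally that $\varphi_1^*$ is $R_\mu$-Lipschitz on $B_{R_\nu}$.

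First, every $v\in\partial\varphi_0(z)$ with $z$ in the open ball has $\|v\|\le R_\nu$ by the Lipschitz assumption, and by \Cref{rem:extension_lipschitz_border} this extends to the closed ball. Hence the set $K:=\partial\varphi_0(\bar B_{R_\mu})$ lies in $\bar B_{R_\nu}$, and $\varphi_1$ is a supremum of $R_\nu$-Lipschitz affine functions. It is therefore finite, convex and globally $R_\nu$-Lipschitz; in particular $\varphi_1 \in \Gamma_0(\R^d)$ and $\varphi_1$ is $R_\nu$-Lipschitz on $B_{R_\mu}$. Since each affine function lies below $\varphi_0$, we get $\varphi_1\le\varphi_0$. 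Conversely, at any $x\in B_{R_\mu}$ the subdifferential is nonempty (interior of the domain), and taking $z=x$ gives $\varphi_1(x)\ge\varphi_0(x)$, so $\varphi_1=\varphi_0$ on $B_{R_\mu}$.

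For the conjugates, $\varphi_1\le\varphi_0$ gives $\varphi_1^*\ge\varphi_0^*$. For $y\in\partial\varphi_0(x)$ with $x\in B_{R_\mu}$, we have $\varphi_1(x)=\varphi_0(x)$, and $y$ is a subgradient of $\varphi_1$ at $x$, because the affine minorant $\varphi_0(x)+\langle y,\cdot-x\rangle$ lies below $\varphi_1$ and touches it at $x$. Fenchel--Young equality then gives $\varphi_1^*(y)=\langle x,y\rangle-\varphi_1(x)=\varphi_0^*(y)$. Thus $\varphi_1^*=\varphi_0^*$ on $\partial\varphi_0(B_{R_\mu})$.

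The main obstacle is the remaining requirement, that $\varphi_1^*$ is $R_\mu$-Lipschitz on $B_{R_\nu}$. For this I would write $\varphi_1 = (\varphi_0^* + \delta(\cdot|K))^*$ and obtain $\varphi_1^* = \overline{\conv}(\varphi_0^*+\delta(\cdot|K))$, so its domain is $\conv K\subset \bar B_{R_\nu}$, which only controls the domain. To get the Lipschitz bound, I would use that for $y$ in the relative interior of the domain the subgradients $x\in\partial\varphi_1^*(y)$ are exactly points where $y\in\partial\varphi_1(x)$. Since $\varphi_1$ is a sup over a compact index set, $\partial\varphi_1(x)$ is the convex hull of the active $v$'s, so one needs to locate maximizers in $\bar B_{R_\mu}$. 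This does not hold automatically. The fix I would try is to modify the construction to
\begin{equation*}
    \tilde\varphi_1 = \big(\varphi_1^* \,\square\, R_\mu\|\cdot\|\big)^* = \varphi_1 + \delta(\cdot\,|\,\bar B_{R_\mu})\ \text{on the conjugate side},
\end{equation*}
that is, to take $\varphi_1^*$ inf-convolved with $R_\mu\|\cdot\|$, which is $R_\mu$-Lipschitz everywhere. I would then check that on $\partial\varphi_0(B_{R_\mu})$ the inf-convolution is not active, using $\varphi_1^*(y')\ge\varphi_1^*(y)+\langle x,y'-y\rangle\ge\varphi_1^*(y)-R_\mu\|y'-y\|$ with $x\in B_{R_\mu}$ a subgradient. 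I would also check that its conjugate $\varphi_1+\delta(\cdot|\bar B_{R_\mu})$ (closed, equal to $\varphi_0$ on the ball) is still $R_\nu$-Lipschitz on $B_{R_\mu}$ and equals $\varphi_0$ there. This symmetric two-sided truncation, first restricting the slopes to $\bar B_{R_\nu}$ and then the points to $\bar B_{R_\mu}$, is the step needing the most care.
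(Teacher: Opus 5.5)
Your plan goes through, and its end product is exactly the paper's construction. The paper simply sets $\psi:=\varphi_0+\delta(\cdot|\bar B_{R_\mu})$. Your $\tilde\varphi_1=\varphi_1+\delta(\cdot|\bar B_{R_\mu})$ equals $\psi$: you have $\varphi_1\le\varphi_0$ everywhere and $\varphi_1=\varphi_0$ on the open ball, and on the sphere convexity of $\varphi_1$ along the radius gives $\varphi_1(z)\ge\lim_{t\uparrow 1}\varphi_0(tz)=\varphi_0(z)$ (\Cref{rem:extension_lipschitz_border}). So the supremum of affine minorants is a detour that the truncation erases.

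The paper's verifications are also shorter than yours:
\begin{itemize}
\item \textbf{Conjugate identity.} Since $\psi\ge\varphi_0$ with equality at $x\in B_{R_\mu}$, every $y\in\partial\varphi_0(x)$ lies in $\partial\psi(x)$. Fenchel--Young equality then gives $\psi^*(y)=\varphi_0^*(y)$ directly, with no inactivity check.
\item \textbf{Lipschitz bound.} $\psi^*$ is finite everywhere, and any $x\in\partial\psi^*(y)$ satisfies $y\in\partial\psi(x)$, hence $x\in\mathrm{dom}\,\psi\subset\bar B_{R_\mu}$. Then \cite[Theorem 24.7]{rockafellar1970convex} gives global $R_\mu$-Lipschitz continuity.
\end{itemize}
Your identity $\psi^*=\varphi_1^*\,\square\,R_\mu\|\cdot\|$ is an equivalent way to see the Lipschitz bound. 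You do need to note that the inf-convolution is finite: it is bounded below since $\varphi_1^*\ge\langle x_0,\cdot\rangle-\varphi_1(x_0)$ for some $x_0\in B_{R_\mu}$. It is therefore continuous and closed, which is what legitimizes the biconjugation step.

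One intermediate claim is false, though it is not load-bearing. Subgradients of $\varphi_0$ at points of the sphere $\|z\|=R_\mu$ need not lie in $\bar B_{R_\nu}$. \Cref{rem:extension_lipschitz_border} only says the restriction of $\varphi_0$ to the closed ball is Lipschitz; it says nothing about $\varphi_0$ outside it. Two counterexamples:
\begin{itemize}
\item With $d=1$, $R_\mu=R_\nu=1$ and $\varphi_0(x)=\max(0,10(x-1))$, one has $\partial\varphi_0(1)=[0,10]$. Your $\varphi_1$ then equals $\varphi_0$, which is not $1$-Lipschitz.
\item With $\varphi_0=\delta(\cdot|\bar B_{R_\mu})$, your set $K$ is all of $\R^d$ and $\varphi_1$ is not finite.
\end{itemize}
Indexing the supremum over the open ball repairs this. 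However, the final truncation only uses $\varphi_1$ on $\bar B_{R_\mu}$ and its properness, so the cleanest fix is to drop the first stage and truncate $\varphi_0$ directly, as the paper does.
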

\begin{proof} 
    Define $\varphi_1:=\varphi_0+\delta(.|\bar{B}_{R_\mu})$ where $\delta(.|C)$ denotes the  indicator function which values $0$ in $C$ and $+\infty$ elsewhere. By construction, we have that $\varphi_1\in \Gamma_0(\R^d)$ and is $R_\nu$-Lipschitz on $B_{R_\mu}$.  
    It also results classically that $\varphi_1^*$ is globally $R_\mu$-Lipschitz. 
    Indeed, $\varphi_1^*$ is finite everywhere as it is defined by maximization of an upper semicontinuous function on a compact set. Therefore, $\partial \varphi_1^*$ has full domain and $x\in \partial\varphi_1^*(y)$ implies $y \in \partial \varphi_1(x)$ (\cite[Theorem 23.5]{rockafellar1970convex}) implying that $\varphi_1(x)$ is finite . By construction of $\varphi_1$ this means that $\|x\|\leq R_{\mu}$ and by \cite[Theorem 24.7]{rockafellar1970convex}, $\varphi_1^*$ is $R_\mu$-Lipschitz.

    For the last part, since $\varphi_0 = \varphi_1$ on $B_{R_\mu}$, it holds that $ \partial \varphi_0 = \partial \varphi_1$ on $B_{R_\mu}$. For any $y \in \partial \varphi_0(B_{R_\mu})$, there is $x \in B_{R_\mu}$ such that $y \in \partial \varphi_0(x)$. Since we have $\varphi_1(x) = \varphi_0(x)$ and also $y \in \partial \varphi_1(x)$, it holds that $\varphi_1^*(y) = \left\langle x, y\right\rangle - \varphi_1(x) = \left\langle x, y\right\rangle - \varphi_0(x) = \varphi_0^*(y)$ using the equality case of Fenchel-Young's inequality \cite[Theorem 23.5]{rockafellar1970convex}.
\end{proof}

\begin{lemma}
    For all  $r>0$ we have the following upper bounds on the volume of balls of radius $r$:
    \begin{align*}
        \mathrm{Vol}(B(0,r)) &\leq \frac{e}{\sqrt{2\pi}\sqrt{\frac{d}{2}+1} }
        \left(\frac{\pi e}{\left( \frac{d}{2} + 1 \right)} \right)^{\frac{d}{2}} r^d \\
        \mathrm{Vol}(B(0,8r)) &\leq
       \frac{e}{\sqrt{2\pi}\sqrt{\frac{d}{2}+1} }
        \left(\frac{\pi e 64}{\left( \frac{d}{2} + 1 \right)} \right)^{\frac{d}{2}} r^d.
    \end{align*}
    \label{Lem:volume_estimate_ball}
\end{lemma}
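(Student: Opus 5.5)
The plan is to use the exact formula for the volume of the Euclidean ball, $\mathrm{Vol}(B(0,r)) = \frac{\pi^{d/2}}{\Gamma(\frac d2+1)} r^d$, together with a Stirling-type lower bound on the Gamma function. The second inequality then follows from the first by substituting $8r$ for $r$: we have $(8r)^d = 64^{d/2} r^d$, and the factor $64^{d/2}$ is absorbed into the bracket raised to the power $d/2$. So the whole task reduces to the first inequality.

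For the first inequality, set $x = \frac d2 + 1 > 0$. It suffices to show
\begin{align*}
\Gamma(x) \geq \sqrt{2\pi}\, x^{x-\frac12} e^{-x}.
\end{align*}
Indeed, granting this bound, we get
\begin{align*}
\frac{\pi^{d/2}}{\Gamma(x)} \leq \frac{\pi^{d/2} e^{x}}{\sqrt{2\pi}\, x^{x-\frac12}} = \frac{e\,\pi^{d/2} e^{d/2}}{\sqrt{2\pi}\, x^{d/2}\, x^{1/2}} = \frac{e}{\sqrt{2\pi}\sqrt{\frac d2+1}} \left(\frac{\pi e}{\frac d2+1}\right)^{\frac d2},
\end{align*}
which is exactly the claimed constant. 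Here we used $x - \frac12 = \frac d2 + \frac12$ and $e^{x} = e\cdot e^{d/2}$.

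The only non-routine step is the Stirling lower bound $\Gamma(x) \geq \sqrt{2\pi}\, x^{x-1/2}e^{-x}$ for all $x>0$, and we need it for real (half-integer) $x$, not only for integers. I would invoke Binet's formula
\begin{align*}
\log\Gamma(x) = \left(x-\tfrac12\right)\log x - x + \tfrac12\log(2\pi) + \mu(x),
\end{align*}
where the remainder is
\begin{align*}
\mu(x) = \int_0^\infty \left(\frac12 - \frac1t + \frac1{e^t-1}\right)\frac{e^{-xt}}{t}\,dt.
\end{align*}
The integrand factor $\frac12 - \frac1t + \frac1{e^t-1}$ is nonnegative for all $t>0$, so $\mu(x) \ge 0$ and the lower bound follows. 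Alternatively, one can cite the standard bounds $0<\mu(x)<\frac1{12x}$ as a known result.
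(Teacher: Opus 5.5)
Your proof is correct and follows the paper's argument: the exact formula $\mathrm{Vol}(B(0,r)) = \pi^{d/2}r^d/\Gamma(\frac d2+1)$ combined with the lower bound $\Gamma(z)\geq\sqrt{2\pi}\,z^{z-1/2}e^{-z}$ at $z=\frac d2+1$, with the second bound obtained by substituting $8r$. The only difference is that you justify the Gamma lower bound yourself via Binet's formula and the positivity of its remainder, whereas the paper simply cites the bound from the NIST handbook.
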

\begin{proof}
    The Gamma function admits the following lower bound, given for example in \cite[Section 5.6]{olver2010nist}: for all $z>0$, $\Gamma(z) \geq \sqrt{2\pi}z^{z - 1/2}e^{-z}$. Combining this bound with the expression of the volume of $B_{r}$: $\mathrm{Vol}(B_r) = \frac{\pi^{d/2}}{\Gamma\left( \frac{d}{2} + 1 \right)} r^d$ we obtain: 
    \begin{align*}
        \mathrm{Vol}(B_r) &\leq
    	\frac{\pi^{d/2}}{\sqrt{2\pi}\left( \frac{d}{2} + 1 \right)^{\frac{d+1}{2}}e^{-\left( \frac{d}{2} + 1 \right)} } r^d = 
    	\frac{e}{\sqrt{2\pi}\sqrt{\frac{d}{2}+1} }
    	\left(\frac{\pi e}{\left( \frac{d}{2} + 1 \right)} \right)^{\frac{d}{2}} r^d.
    \end{align*}
    We deduce the bound on $\mathrm{Vol}(B(0,8r))$.
\end{proof}

\subsection{Proof of the main duality theorem}
\label{sec:proogTheoremClassicalDuality}
\begin{proof}[Proof of \Cref{th:classical_duality_theorem}]
    This is mostly classical, we provide arguments and literature pointers for completeness.
    \cite[Theorem 1.40]{book_santambrogio} ensures that there exists Kantorovich dual potentials optimal for the following problem
     \begin{equation}\label{eq:Th_dualite_eq1}
        \max_{(f,g)\in L^1(\mu)\times L^1(\nu)}\int_{\R^d}fd\mu+\int_{\R^d}gd\nu.
    \end{equation}
    Moreover the functions $(f,g)$ can be chosen such that $\varphi:=\frac{\|.\|^2}{2}-f$ and $\psi:=\frac{\|.\|^2}{2}-g$ verify $\varphi^*=\psi$ and $\psi^*=\varphi$ and thus $\varphi$ and $\psi$ are in $\Gamma_0(\R^d)$. Furthermore,  \cite[Theorem 1.40]{book_santambrogio} provides equality between \eqref{eq:Th_dualite_eq1} and \eqref{PP}. Therefore using this change of variables we obtain \eqref{PP} $=$ \eqref{DP} and both are being attained. 
    
    We now prove the equivalence between the three characterizations. The first equivalence $(1 \iff 2)$ is a consequence of \eqref{PP}$=$\eqref{DP}. 
    Let us show ($2 \iff 3$). Expanding the square in $\int_{\R^d\times\R^d}\frac{1}{2}\|x-y\|^2d\gamma(x,y)$ and using the fact that $\gamma$ has marginals $\mu$ and $\nu$ shows that point 2 is equivalent to  $\int_{\R^d\times\R^d}[\varphi(x)+\varphi^*(y)-\langle x,y \rangle ]d\gamma(x,y)=0$. By the Fenchel-Young inequality the integrand is positive, and using \Cref{prop:measure_theory} this is equivalent to $\supp \gamma \subset \{(x,y),\:\varphi(x)+\varphi^*(y)=\langle x,y \rangle\} = \graph(\partial\varphi)$.

    Let us now prove that an optimal potential in \eqref{DP} may be chosen Lipschitz. Let $\varphi$ be an optimal dual potential in \eqref{DP} and consider $\varphi_1=(\varphi^*+\delta(.|\bar{B}_{R_\nu}))^*$ . Classically (see \cite[Theorems 23.5 and 24.7]{rockafellar1970convex}), $\varphi_1$ is $R_\nu$-Lipschitz.
    Let us show that $\varphi_1$ is also optimal for \eqref{DP}. Fix $(x,y)\in\supp \gamma$, by the third characterization, we have $y\in\partial\varphi(x)$, hence $y\in\argmax_{z\in \R^d}\langle x,z \rangle-\varphi^*(z)$ by \cite[Theorem 23.5]{rockafellar1970convex}, so that
    \begin{equation*}
        \begin{split}
            \varphi(x)&=\varphi^{**}(x)= \sup_{z}\langle x,z \rangle-\varphi^*(z) = \langle x,y \rangle-\varphi^*(y) = \max_{\|z\|\leq R}\langle x,z \rangle-\varphi^*(z)
            =\varphi_1(x).
        \end{split}
    \end{equation*}
    Also we obviously have $\varphi_1^*(y)=\varphi^*(y)$ for all $y\in \supp \nu$. Since $\gamma$ is a coupling with marginals $\mu$ and $\nu$ the optimality of $\varphi_1$ is deduced from the previous arguments by integration: 
    \begin{equation*}
        \int_{\R^d\times \R^d}[\varphi_1(x)+\varphi_1^*(y)]  d\gamma(x,y)=\int_{\R^d\times\R^d} [\varphi(x)+\varphi^*(y)]d\gamma(x,y).
    \end{equation*}

    Suppose now that $\mu$ is absolutely continuous. The existence of a transport map  can be deduced from Rademacher's Theorem \cite[Theorem 1.17]{book_santambrogio}. Any Lipschitz optimal Brenier potential is differentiable almost everywhere such that $\partial \varphi(x) = \{\nabla \varphi(x)\}$ for $\mu$ almost all $x$. Consequently, since $\gamma$ has first marginal $\mu$ and is supported on $\graph \partial \varphi$, for any measurable function $f$ : $    \int f(x,y)\, d\gamma(x,y) \;=\; \int f\bigl(x,\nabla \varphi(x)\bigr)\, d\mu(x).$

    Uniqueness of the map can be shown using \cite[Remark 1.19]{book_santambrogio} for example. 
\end{proof}

\subsection{Proof illustration}
\Cref{fig:illustrProofStability} illustrates the proof of \Cref{th:stabilityCoupling}.
\begin{figure}[H]
    \centering
    \includegraphics[width=0.6\linewidth]{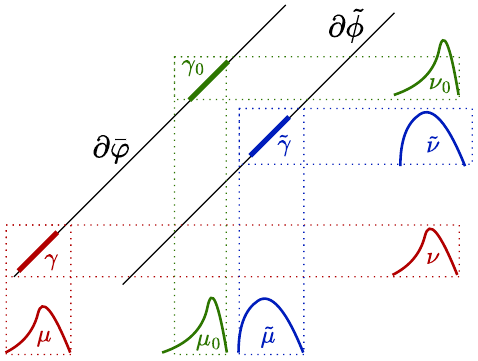}
    \caption{A visual illustration of the proof of \Cref{th:stabilityCoupling}. $\gamma$ and $\tilde{\gamma}$ are supported on the graph of $\partial \bar{\varphi}$ and $\partial \tilde{\phi}$ respectively. \Cref{lem:perturbCoulingSupport} provides $\gamma_0$, on the graph of $\partial \bar{\varphi}$, such that $W_2(\tilde{\gamma}, \gamma_0)$ is controlled by a suboptimality gap. Since $\gamma$ and $\gamma_0$ are both supported on the graph of $\partial \bar{\varphi}$, \Cref{lem:extensionStability} allows to control $W_2(\gamma, \gamma_0)$ with a $W_2(\mu, \mu_0)$ term, in turn controlled by $W_2(\mu, \tilde{\mu})$ and the suboptimality gap in \Cref{lem:perturbCoulingSupport}.}
    \label{fig:illustrProofStability}
\end{figure}

\subsection{On obtaining improved rates}
\label{sec:improvedRates}
As described in the conclusion of the main text, our statistical convergence rates could be improved, see also \cite{merigot2026sharp}. The main argument is as follows, it relies on \cite[Corollary 2.2]{carlier2025quantitative} instead of \Cref{th:coveringNumber}. In the proof of \Cref{th:dualErrorBound}, equation \eqref{eq:ineqTemp2} could be replaced by the following, for $q>1$:

\begin{align}
	\int \| v_\lambda - \nabla \phi\|^q d\mu \leq\,& \int \mathrm{diam}(\partial \phi(B(x,\eta)))^q d\mu(x) \nonumber \leq M\int \mathrm{diam}(\partial \phi(B(x,\eta)))^q d x \nonumber \leq ML^qC \eta 
\end{align}
for some constant $C$ given in \cite[Corollary 2.2]{carlier2025quantitative}. The obtained bound can then be combined with a computation similar to \Cref{lem:technicalComputation} to obtain case 2 in \Cref{th:dualErrorBound}, with an exponent $q+2$ instead of $q+3$ for $q>1$.

Similarly in the proof of \Cref{lem:extensionStability}, equation \eqref{eq:tempIneqCoupling} could be replaced by
\begin{align}
	\label{eq:tempIneqCouplingBis}
	\int_{x \not\in\Omega_\eta} \|\nabla \bar\varphi(x) - \tilde{y}\|^2 d\theta_{S(x)} (\tilde{y}) d\mu(x)& \leq M\int \mathrm{diam}(\partial \phi(B(x,\eta)))^2 d x \nonumber \leq ML^2C \eta 
\end{align}
for some constant $C$ given in \cite[Corollary 2.2]{carlier2025quantitative}. Similar computations lead to an inequality of the form
\begin{align*}
	W_2^2(\gamma,\theta) \leq \left( 1 + \frac{4L^2}{\eta^2} \right) W_2^2(\mu,\beta) + c \eta
\end{align*}
for some constant $c$. Optimizing over $\eta$ leads to an exponent $1/3$ instead of $1/4$ in case two of \Cref{lem:extensionStability}. The same reasoning as in \Cref{th:stabilityCoupling} leads to an exponent $1/6$ instead of $1/8$. 

\subsection{Measure theoretical lemmas}
The following are classical, we provide proof arguments for completeness.
\begin{definition}
 The support of a probability measure $\mu$ on $\R^d$  is the smallest closed set $X\subset\R^d$ such that  $\mu(X)=1$ . That is:
 \begin{equation*}
    \supp \mu=\bigcap\left\{X\subset\R^d\:|\:X\:\text{closed},\:\mu(X)=1\right\}.
 \end{equation*}
 Or equivalently: 
 \begin{equation*}
    \supp \mu= \{x\in \R^d\:|\:\mu(B(x,\epsilon))>0\:\forall\epsilon>0\}.
 \end{equation*}
\end{definition}
\begin{definition}
For $i=1,2$ we denote $\pi_{i}:\R^d\times \R^d\longrightarrow\R^d$ the projection mapping on the first and respectively second component, \textit{i.e} for every $(x_1,x_2)\in\R^d\times \R^d$: $\pi_i(x_1,x_2)=x_i$.
\end{definition}
\begin{proposition}
    Let $\mu$ be a probability measure and $f$ be a non-negative lower semi-continuous function. Then $\int_{\R^d}fd\mu=0$ if and only if $\supp \mu\subset \{x\in\R^d,\:f(x)=0\}$.
    \label{prop:measure_theory}
\end{proposition}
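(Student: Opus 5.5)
We prove both implications separately, writing $Z=\{x\in\R^d : f(x)=0\}$. Since $f\geq 0$ and $f$ is lower semi-continuous, the set $\{f>0\}=\bigcup_{k\geq 1}\{f>1/k\}$ is open, as each $\{f>1/k\}$ is open by lower semi-continuity. Hence $Z$ is closed. This topological fact is the only place where the lower semi-continuity hypothesis enters, and it is what lets us connect the integral condition to the definition of the support as the smallest closed set of full measure.

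For the direction ``$\supp\mu\subset Z$ implies $\int f d\mu=0$'', we use that $\mu(\supp\mu)=1$. This holds because $\R^d$ is second countable: the complement of $\supp\mu$ is a countable union of open balls of $\mu$-measure zero. Therefore $\mu(Z)=1$, so $f=0$ $\mu$-almost everywhere, and the integral vanishes.

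For the converse, assume $\int f d\mu=0$. Since $f\geq 0$, Markov's inequality gives $\mu(\{f>1/k\})\leq k\int f d\mu=0$ for every $k\geq 1$. By countable subadditivity, $\mu(\{f>0\})=0$, that is, $\mu(Z)=1$. Since $Z$ is closed and has full measure, the first characterization of the support as the intersection of all closed sets of full measure gives $\supp\mu\subset Z$. This is the desired inclusion.

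The only delicate points are the closedness of $Z$ and the fact $\mu(\supp\mu)=1$; both are standard, and no step poses a real obstacle. In the application within the proof of \Cref{th:classical_duality_theorem}, the function is $f(x,y)=\varphi(x)+\varphi^*(y)-\langle x,y\rangle$ on $\R^d\times\R^d$. This $f$ is nonnegative by Fenchel--Young and lower semi-continuous because $\varphi,\varphi^*\in\Gamma_0$. The same argument applies verbatim on $\R^{2d}$, including when $f$ takes the value $+\infty$.
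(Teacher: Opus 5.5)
Your proof is correct, but it takes a different route from the paper.

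\textbf{The paper's route.} The paper proves only the implication $\int f\,d\mu=0\Rightarrow\supp\mu\subset\{f=0\}$, and it argues pointwise. For $x\in\supp\mu$ it uses the ball characterization $\mu(B(x,\epsilon))>0$ to get $\inf_{B(x,\epsilon)}f=0$ for every $\epsilon>0$. Lower semicontinuity at $x$ then gives $f(x)\le\liminf_{t\to x}f(t)=0$.

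\textbf{Your route.} You use lower semicontinuity globally, as closedness of the zero set $Z$. You obtain $\mu(Z)=1$ from Markov's inequality and countable subadditivity. You then conclude by minimality of the support among closed sets of full measure. This avoids the local infimum computation and makes the extended-valued case transparent, which is the case needed for $f(x,y)=\varphi(x)+\varphi^*(y)-\langle x,y\rangle$.

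\textbf{What you add.} You also prove the reverse implication via $\mu(\supp\mu)=1$, using second countability; the paper's proof omits this direction. It is genuinely needed: the proof of \Cref{th:classical_duality_theorem} claims an equivalence between points 2 and 3. The direction from point 3 back to optimality is what \Cref{lem:explicitBrenierPotential} relies on to certify that $\varphi_g$ is an optimal Brenier potential.
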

\begin{proof}
    Let $x\in\supp \mu$. For all $\epsilon>0$ we always have $\int_{B(x,\epsilon)}fd\mu=0$. On the other hand we also have $\int_{B(x,\epsilon)}fd\mu\geq \inf_{t\in B(x,\epsilon)}f(t)\mu(B(x,\epsilon))$. By definition of the support $\mu(B(x,\epsilon))>0$ for all $\epsilon>0$. It results that for any $x\in \supp \mu$ and for any $\epsilon>0,$ $\inf_{t\in B(x,\epsilon)}f(t)=0$. Taking the limit as $\epsilon\longrightarrow0$ we thus obtain $\lim_{\epsilon\rightarrow 0}\inf_{t\in B(x,\epsilon)}f(t)=\lim\inf_{t\rightarrow x}f(t)\geq f(x)$ by lower semi-continuity. This shows that $f(x)=0$.
    \end{proof}

\begin{lemma}
    Let $\mu,\:\nu$ be be compactly supported probability measures on $\R^d$ and $\gamma\in \Pi(\mu,\nu)$. Then 
    
    \begin{equation*}
        \supp \mu=\{x\in \R^d,\:\exists y\in \supp \nu\:\text{s.t}\:(x,y)\in\supp(\gamma)\}.
    \end{equation*}
    \label{lem:projection_support_coupling}
\end{lemma}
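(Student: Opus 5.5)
We prove the two inclusions separately, using only the definition of the support through balls and the marginal constraints of $\gamma$. Write $S=\{x\in\R^d:\exists y\in\supp\nu,\ (x,y)\in\supp\gamma\}$.

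\emph{Inclusion $S\subset\supp\mu$.} Take $x\in S$ and a corresponding $y$ with $(x,y)\in\supp\gamma$. For every $\epsilon>0$ the set $B(x,\epsilon)\times\R^d$ contains a neighbourhood of $(x,y)$, so by the ball characterization of the support, $\mu(B(x,\epsilon))=\gamma(B(x,\epsilon)\times\R^d)>0$. Hence $x\in\supp\mu$.

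\emph{Inclusion $\supp\mu\subset S$.} We first show $\supp\gamma\subset\supp\mu\times\supp\nu$. The set $\supp\mu\times\supp\nu$ is closed, and
\[
\gamma\big((\supp\mu\times\supp\nu)^c\big)\le \mu\big((\supp\mu)^c\big)+\nu\big((\supp\nu)^c\big)=0 .
\]
By the minimality in the definition of the support, the inclusion follows. Consequently $S=\pi_1(\supp\gamma)$.

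Now fix $x\in\supp\mu$ and argue by contradiction, supposing $x\notin\pi_1(\supp\gamma)$. Since $\supp\nu$ is compact and $\supp\gamma\subset\R^d\times\supp\nu$ is closed, $\supp\gamma$ is closed with its second coordinate confined to a compact set. Hence $\pi_1(\supp\gamma)$ is closed: a limit $x_k\to x'$ with $(x_k,y_k)\in\supp\gamma$ admits a subsequence $y_k\to y'\in\supp\nu$, and $(x',y')\in\supp\gamma$ by closedness. There is therefore $\epsilon>0$ with $B(x,\epsilon)\cap\pi_1(\supp\gamma)=\emptyset$, so $\big(B(x,\epsilon)\times\R^d\big)\cap\supp\gamma=\emptyset$. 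This gives
\[
\mu(B(x,\epsilon))=\gamma\big(B(x,\epsilon)\times\R^d\big)\le\gamma\big((\supp\gamma)^c\big)=0,
\]
which contradicts $x\in\supp\mu$.

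The one step needing care is the closedness of the projection $\pi_1(\supp\gamma)$. A projection of a closed set is not closed in general, and here it relies on compactness in the second variable, which is why the hypothesis that $\nu$ is compactly supported is used.
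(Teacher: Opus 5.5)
Your proof is correct and follows the same skeleton as the paper's: show $\supp\gamma\subset\supp\mu\times\supp\nu$, identify the set with $\pi_1(\supp\gamma)$, and prove this projection is closed by a compactness argument. The differences are minor. You derive directly, from the ball characterization, the identity $\supp\mu=\overline{\pi_1(\supp\gamma)}$ that the paper imports from McCann's Lemma 3. Your closedness step uses only compactness of $\supp\nu$, rather than compactness of $\supp\gamma$ as in the paper, so your argument does not need $\mu$ to be compactly supported.
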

\begin{proof}
    By \cite[Lemma 3]{mccann2010optimal} we have:
    \begin{equation}\label{eq:proj1}
        \supp \mu=\overline{\mathrm{proj}_1(\supp \gamma)}
    \end{equation}
    where $\mathrm{proj}_i:\R^d\times \R^d\longrightarrow\R^d$ denotes the canonical projection mapping on the $i$'th component. We thus deduce the following sequence of inclusions. 
    \begin{align*}
        \supp \gamma \ &\subset \mathrm{proj}_1(\supp \gamma) \times \mathrm{proj}_2(\supp \gamma)\subset \overline{\mathrm{proj}_1(\supp \gamma)} \times \overline{\mathrm{proj}_2(\supp \gamma)} =\supp \mu \times \supp \nu.
    \end{align*}
    We deduce that $\supp \gamma$ is compact, and for any $(x,y)\in \supp \gamma$, $y\in \supp \nu$. It follows that
    \begin{equation}\label{eq:proj2}
        \begin{split}
            \mathrm{proj}_1(\supp \gamma)&:=\left\{x\in \R^d,\:\exists y\in \R^d,\:(x,y)\in \supp \gamma \right\}\\
            &=\left\{x\in \R^d,\:\exists y\in \supp \nu,\:(x,y)\in \supp \gamma \right\}. 
        \end{split}
    \end{equation}
    It remains to show that $\mathrm{proj}_1(\supp \gamma)$ is closed, which follows from boundedness. Let $(x_n)_{n\in \mathbb{N}^*}\subset \mathrm{proj}_1(\supp \gamma)$ be a convergent sequence. For every $n\in \mathbb{N}^*$, there exists $y_n\in \supp \nu$ such that $(x_n,y_n)\in \supp \gamma$. Since $\supp \gamma$ is compact, there exists a subsequence of $(x_n,y_n)_n$ converging to some $(x^*,y^*)\in \supp \gamma$. Since $(x_n)_n$ converges, we necessarily have $x^*\in \mathrm{proj}_1(\supp \gamma)$. Hence, $\mathrm{proj}_1(\supp \gamma)$ is closed. Combining \eqref{eq:proj1} and \eqref{eq:proj2} thus yields the result.
\end{proof}

\subsection{Further technical lemmas}
\label{sec:LemmaPowers}

\begin{lemma}
    Assume $\delta,L,M,C > 0$ and $q \in [1,2]$.
	For $\eta = \left(\frac{\sqrt{\delta}}{\sqrt{2L} (M C)^{\frac{1}{q+1}}}\right)^{\frac{2q + 2}{q+3}}$, we have
	\begin{align*}
		2 L( M C\eta)^{\frac{1}{q+1}} + \sqrt{\frac{2L\delta}{\eta}} = 2 (2L)^{\frac{q+2}{q+3}} (MC\delta)^{\frac{1}{q+3}} 
	\end{align*}
\end{lemma}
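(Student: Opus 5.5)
The plan is a direct computation. I will write $A := MC>0$ and check that, for the given $\eta$, the two terms on the left-hand side are equal, each to $(2L)^{\frac{q+2}{q+3}} (A\delta)^{\frac{1}{q+3}}$. Their sum is then the claimed right-hand side. The only structural point is that $\eta$ balances the two terms; all quantities are positive, so exponents can be manipulated freely.

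First I rewrite the choice of $\eta$ in separated form:
\begin{align*}
\eta = \left(\frac{\delta}{2L}\right)^{\frac{q+1}{q+3}} A^{-\frac{2}{q+3}},
\end{align*}
using $\bigl(\sqrt{\delta/(2L)}\bigr)^{\frac{2q+2}{q+3}} = (\delta/(2L))^{\frac{q+1}{q+3}}$ and $\bigl(A^{-\frac{1}{q+1}}\bigr)^{\frac{2q+2}{q+3}} = A^{-\frac{2}{q+3}}$.

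For the first term, I compute $2L(A\eta)^{\frac{1}{q+1}}$ using $(\delta/(2L))^{\frac{1}{q+3}}$.
\begin{itemize}
\item The exponent of $A$ is $\frac{1}{q+1}\bigl(1-\frac{2}{q+3}\bigr) = \frac{1}{q+3}$.
\item The exponent of $\delta$ is $\frac{1}{q+3}$.
\item The exponent of $2L$ is $1-\frac{1}{q+3} = \frac{q+2}{q+3}$.
\end{itemize}
This gives $2L(A\eta)^{\frac{1}{q+1}} = (2L)^{\frac{q+2}{q+3}}(A\delta)^{\frac{1}{q+3}}$.

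For the second term, I write $\sqrt{2L\delta/\eta} = (2L)^{1/2}\delta^{1/2}\,(\delta/(2L))^{-\frac{q+1}{2(q+3)}}A^{\frac{1}{q+3}}$.
\begin{itemize}
\item The exponent of $\delta$ is $\frac12-\frac{q+1}{2(q+3)} = \frac{1}{q+3}$.
\item The exponent of $2L$ is $\frac12+\frac{q+1}{2(q+3)} = \frac{q+2}{q+3}$.
\item The exponent of $A$ is $\frac{1}{q+3}$.
\end{itemize}
So the second term equals $(2L)^{\frac{q+2}{q+3}}(A\delta)^{\frac{1}{q+3}}$ as well.

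Adding the two terms gives $2(2L)^{\frac{q+2}{q+3}}(MC\delta)^{\frac{1}{q+3}}$. There is no real obstacle here; the only care needed is the exponent arithmetic, in particular the identity $\frac{q+3-2}{(q+1)(q+3)} = \frac{1}{q+3}$ for the power of $A$ in the first term.
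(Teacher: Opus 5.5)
Your proposal is correct and follows essentially the same route as the paper: both write each term as a product of powers of $2L$, $MC$ and $\delta$, and check that each equals $(2L)^{\frac{q+2}{q+3}}(MC\delta)^{\frac{1}{q+3}}$. Your preliminary rewriting $\eta = (\delta/(2L))^{\frac{q+1}{q+3}}(MC)^{-\frac{2}{q+3}}$ just makes the same exponent bookkeeping a little more transparent.
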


\begin{proof}
	We compute powers, we have $2 L( M C\eta)^{\frac{1}{q+1}} = (2L)^{a} (MC)^b \delta^c$ with
	\begin{align*}
		a &= 1 -  \frac{2q+2}{2(q+1)(q+3)}= \frac{q+2}{q+3}  \\
		b &= \frac{1}{q+1}\left( 1 - \frac{2q+2}{(q+1)(q+3)} \right) = \frac{1}{q+3} & c = \frac{2q +2 }{2 (q+1) (q+3)} = \frac{1}{q+3}.
	\end{align*}
	Similarly, we have $\sqrt{\frac{2L\delta}{\eta}} = (2L)^{a} (MC)^b \delta^c$ with
	\begin{align*}
		a &= \frac{1}{2} + \frac{2q+2}{4(q+3)} = \frac{q+3 + q+1 }{2(q+3)} = \frac{q+2}{q+3} \\
		b &= \frac{2q+2}{2(q+1)(q+3)} = \frac{1}{q+3}  
		&c = \frac{1}{2} - \frac{2q+2}{4(q+3)} = \frac{q+3 - (q+1)}{2(q+3)} = \frac{1}{q+3} 
	\end{align*}
\end{proof}

\section*{Acknowledgments}
This work benefited from financial support from the French government managed by the National Agency for Research under the France 2030 program, with the references ``ANR-23-PEIA-0004'' and ``ANR-25-PEIA-0002''. EP thanks TSE-P and acknowledges the support of the AI Interdisciplinary Institute ANITI funding, through the ANR under the
France 2030 program (grant ANR-23-IACL-0002), Chair TRIAL, ANR MAD and ANR Regulia.

\bibliographystyle{apalike}
\bibliography{ref}

\end{document}